\def\tab(#1){\mbox{\tiny$\young(#1)$}\,}
\title[Carter-Payne homomorphisms]%
      {Carter-Payne homomorphisms and Jantzen filtrations}
\author[S.~Lyle]{Sin\'ead Lyle}
\address{School of Mathematics \\ University of East Anglia \\ Norwich NR4 7TJ \\ UK}
\author[A.~Mathas]{Andrew Mathas}
\address{School of Mathematics and Statistics \\ University of Sydney \\ NSW 2006 \\ Australia}
\keywords{Hecke algebras, Carter-Payne homomorphisms, Jantzen filtrations.}
\numberwithin{equation}{section}
\numberwithin{figure}{section}
\newtheorem{lemma}[equation]{Lemma}
\newtheorem{theorem}[equation]{Theorem}
\newtheorem{proposition}[equation]{Proposition}
\newtheorem{corollary}[equation]{Corollary}
\newtheorem*{theorem*}{Theorem}
\newtheorem*{proposition*}{Proposition}
\newtheorem*{lemma*}{Lemma}
\newtheorem*{definition*}{Definition}
\newtheorem*{caution*}{Caution}
\theoremstyle{definition}
\newtheorem*{notation}{Notation}
\newtheorem{conjecture}[equation]{Conjecture}
\theoremstyle{remark}
\newtheorem*{remark}{Remark}
\newenvironment{Example}[1][\relax]%
  {\refstepcounter{equation}\trivlist
   \item[\hskip\labelsep\theequation.~\textbf{Example#1}\space]
   \ignorespaces
  }{\unskip\nobreak\hfil%
    \penalty50\hskip2em\hbox{}\nobreak\hfil$\Diamond$%
    \parfillskip=0pt\finalhyphendemerits=0\penalty-100\endtrivlist}
\newenvironment{claim}[1][\relax]
  {\trivlist\item[\hskip\labelsep\bf Claim\ifx#1\relax\relax\else\space#1\fi.]\it}
  {\endtrivlist}
\def\({\big(}
\def\){\big)}
\let\gdom=\vartriangleright
\let\gedom=\trianglerighteq
\let\bar=\overline
\def\a{\mathbf{a}}
\def\A{\mathcal{A}}
\newcommand{\Diag}{\mathbb D}
\def\O{\mathscr O}
\def\val{\mathpzc{val}\!}
\newcommand{\h}[1][n]{\mathscr{H}_{#1}}
\newcommand{\hlam}[1][\lambda]{\mathscr{H}^{\gdom#1}}
\newcommand\HO[1][n]{\h[#1]^{\O}}
\newcommand\HK[1][n]{\h[#1]^{K}}
\newcommand{\sym}[1][n]{\mathfrak{S}_{#1}} 
\def\Sym{\mathfrak S}
\newcommand{\C}{\mathbb{C}}
\newcommand{\Q}{\mathbb{Q}}
\newcommand{\N}{\mathbb{N}}
\newcommand{\M}{\mathpzc{M}}
\newcommand{\Z}{\mathbb{Z}}
\newcommand{\calZ}{\mathcal{Z}}
\newcommand{\la}{\lambda}
\let\Sect=\S
\def\S{\mathsf S}
\def\T{\mathsf T}
\def\U{\mathsf U}
\def\V{\mathsf V}
\def\D{\mathcal D}
\newcommand{\ten}{10}
\newcommand{\eleven}{11}
\newcommand{\twelve}{12}
\renewcommand{\t}{\mathfrak{t}}
\newcommand{\tnu}{\t^\nu}
\newcommand{\s}{\mathfrak{s}}
\def\calL{\mathcal L}
\newcommand{\LL}{L_{\lambda\mu}}
\newcommand{\LLsm}{L_{\sigma\mu}}
\newcommand{\LLls}{L_{\lambda\sigma}}
\newcommand{\LLO}{L^\O_{\lambda\mu}}
\newcommand{\LLOp}{L^{\prime\O}_{\lambda\mu}}
\newcommand{\LLOpp}{L^{\prime\prime\O}_{\lambda\mu}}
\newcommand{\q}{q}
\newcommand{\inim}[1]{\overset{#1}\longleftarrow}
\DeclareMathOperator{\Shape}{Shape}
\def\bump_#1(#2){b_{#1}^{#2}}
\DeclareMathOperator{\row}{row}
\DeclareMathOperator{\Std}{Std}
\DeclareMathOperator{\rowstd}{RStd}
\def\SStd{\mathcal T_0}
\DeclareMathOperator{\Hom}{Hom}
\DeclareMathOperator{\End}{End}
\DeclareMathOperator{\im}{Im}
\DeclareMathOperator{\res}{c}
\DeclareMathAlphabet{\mathpzc}{OT1}{pzc}{m}{it}
\newcommand{\EHom}[1]{\mathpzc{Hom}_{\hspace*{-0.05cm}\mathscr{H}_{#1}}}
\def\map#1#2{\,{:}\,#1\!\longrightarrow\!#2}
  \gdef\set#1{\mathinner{\lbrace\,{\mathcode`\|"8000%
                                   \let|\midvert #1}\,\rbrace}}
\def\midvert{\egroup\mid\bgroup}
\begin{document}
\begin{abstract}
We prove a $q$-analogue of the Carter-Payne theorem in the case where the
differences between the parts of the partitions are sufficiently large. We
identify a layer of the Jantzen filtration which contains the image of
these Carter-Payne homomorphisms and we show how these
homomorphisms compose.
\end{abstract}
\maketitle

\section{Introduction}
The Iwahori-Hecke algebras of the symmetric groups are interesting algebras
with a rich combinatorial representation theory. These algebras arise
naturally in the representation theory of the general linear groups and they are
important because they simultaneously extend and generalize the
representation theory of the symmetric and general linear groups.

The representation theory of the Hecke algebra $\h$ closely parallels that
of the symmetric groups.  For each partition~$\lambda$ of~$n$ there is a
Specht module $S^\lambda$.  In the semisimple case the Specht modules give
a complete set of pairwise non-isomorphic irreducible $\h$-modules.  When
$\h$ is not semisimple it is an important problem to determine the
structure of the Specht modules. The purpose of this paper is to construct
explicit non-trivial homomorphisms between Specht modules in the
non-semisimple case.  Using this construction, we are then able to 
connect the image of the homomorphism and the Jantzen filtration of the 
corresponding Specht module.  

The most striking result about homomorphisms between Specht modules of the
symmetric groups is the \textit{Carter-Payne Theorem}~\cite{Payne}, which
was proved by building on the famous paper of Carter and
Lusztig~\cite{CarterLusztig}. A second proof of the Carter-Payne Theorem
has recently been given by Fayers and Martin~\cite{FayersMartin:homs}.  

In this paper we are concerned with the Carter-Payne homomorphisms of
the Iwahori-Hecke algebra of the symmetric group.  To state our main
results, let $F$ be a field of characteristic $p\ge0$ and fix a
non-zero element $\zeta\in F$. Let $e>1$ be minimal such that
$1+\zeta+\dots+\zeta^{e-1}=0$; set $e=0$ if no such integer exists.
Let $\h$ be the Hecke algebra of the symmetric group $\mathfrak{S}_n$,
over $F$, with parameter $\zeta$.

If $p>0$ and $k>0$ then define $\ell_p(k)$ to be the smallest positive
integer such that $p^{\ell_p(k)}>k$.  Now suppose that $\gamma>0$ and
$\lambda$ and $\mu$ are partitions of $n$ 
such that
\[\mu_i = \begin{cases}
\la_i+\gamma,& i=a, \\
\la_i-\gamma,&  i=z, \\
\la_i, & \text{otherwise},
\end{cases}\]
for some positive integers $a<z$. Let $h=\lambda_a-\lambda_z+z-a+\gamma$.
Then $\lambda$ and $\mu$ form an \textbf{$\boldsymbol(e,p)$-Carter-Payne
pair}, with parameters $(a,z,\gamma)$, if $e>1$ and either
\begin{enumerate}
  \item $p=0$, $\gamma<e$ and $h\equiv0\pmod e$, or,
  \item $p>0$ and $h\equiv0\pmod{ep^{\ell_p(\gamma^*)}}$, where
    $\gamma^*=\lfloor\frac\gamma e\rfloor$.
\end{enumerate}

The Carter-Payne Theorem for an Iwahori-Hecke algebra of the symmetric
group is the following result.

\begin{theorem}[Carter and Payne~\cite{Payne} and Dixon~\cite{Dixon}]
  \label{CarterPayne}
Suppose that $F$ is a field of characteristic $p\ge0$ and
that $\lambda$ and $\mu$ form an $(e,p)$-Carter-Payne pair. Then 
$\Hom_{\h}(S^\lambda,S^\mu)\ne0$.
\end{theorem}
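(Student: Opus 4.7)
The plan is to construct a non-zero element $\Theta \in \Hom_{\h}(S^\lambda, S^\mu)$ explicitly, by specifying its action on a Murphy-style generator of $S^\lambda$ and then verifying that the defining (Garnir) relations of $S^\lambda$ are respected. Since rows $i\ne a,z$ coincide for $\lambda$ and $\mu$, the first step is a \emph{reduction to the two-row situation}: the parameter $h=\lambda_a-\lambda_z+z-a+\gamma$ already absorbs the contribution of the intermediate rows, so one can hope to use Frobenius reciprocity with a suitable parabolic subalgebra of~$\h$, or an induction on the number of rows, to reduce to the case $\lambda=(\lambda_a,\lambda_z)$ and $\mu=(\lambda_a+\gamma,\lambda_z-\gamma)$ with the same divisibility condition on~$h$.

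In the two-row case I would invoke the semistandard basis theorem of Dipper--James and Murphy: a basis of $\Hom_{\h}(S^\lambda,M^\mu)$, where $M^\mu$ is the Young permutation module containing $S^\mu$, is indexed by semistandard tableaux of appropriate shape and content. The candidate $\Theta$ is an $F$-linear combination
\[
\Theta = \sum_{k=0}^{\gamma} c_k(\zeta)\, \Theta_k,
\]
where $\Theta_k$ is the basic homomorphism associated to moving $k$ of the $\gamma$ boxes from row $z$ into row $a$, and the $c_k(\zeta)$ are Gaussian-binomial coefficients depending on $h$ and $\zeta$. These coefficients are forced by requiring simultaneously that $\im\Theta\subseteq S^\mu$ (Garnir relations for $\mu$) and that $\Theta$ annihilates the Garnir relations of~$\lambda$.

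The central technical step, and the main obstacle, is the verification that the relations of $S^\lambda$ are indeed killed. Each Garnir relation reduces to an identity among Gaussian binomials evaluated at the primitive $e$-th root of unity $\zeta$, and the condition $h \equiv 0 \pmod e$ produces the first layer of cancellation. In characteristic $p>0$ the stronger congruence $h \equiv 0 \pmod{ep^{\ell_p(\gamma^*)}}$ is needed to absorb $p$-adic carries in the expansion of $h/e$ once $\gamma \ge e$, and I expect the required arithmetic input to be a Lucas--Kummer-type theorem for quantum binomials at a root of unity; this is exactly the point at which the precise definition of~$\ell_p$ enters.

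Non-vanishing of $\Theta$ is then routine: evaluating on the standard Murphy generator of $S^\lambda$ produces a distinguished $\mu$-tabloid which is maximal in dominance order among those occurring in the image, so it appears with a unit coefficient and cannot be cancelled, which shows the image is non-zero in~$S^\mu$.
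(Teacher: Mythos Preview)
Your proposed reduction to the two-row case is a genuine gap. The parameter $h=\lambda_a-\lambda_z+z-a+\gamma$ records only the axial distance between the ends of rows $a$ and $z$; it does not encode the intermediate parts $\lambda_{a+1},\dots,\lambda_{z-1}$, which enter the problem in an essential way. The semistandard $\mu$-tableaux of type $\lambda$ that index a basis of $\Hom_{\h}(M^\lambda,S^\mu)$ involve entries moving through every row between $a$ and $z$, and there is no parabolic-induction or Frobenius-reciprocity argument that collapses this to the two-row situation. The paper's own explicit formula in the separated case (Proposition~\ref{ImageDistinct}) already illustrates this: the Carter--Payne map is a sum over \emph{all} $\S\in\SStd^\nu(\mu,\lambda)$, with coefficients depending on the contents $c_1,\dots,c_{z-1}$ of every intermediate row; and the examples at the end of the paper show that for non-separated pairs the coefficients are not even products of Gaussian integers. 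Your scheme of checking Garnir relations with quantum-binomial identities is perfectly reasonable for two-part partitions, but it is precisely the intermediate rows that make the general problem hard.

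For comparison: the paper does not itself prove Theorem~\ref{CarterPayne} in full generality. The statement is attributed to Carter--Payne (for $\zeta=1$) and to Dixon's thesis (for general $q$), and Dixon's argument is described just after the statement: he works not in $\h$ but in the quantum hyperalgebra $U$ of the general linear group, shows $\Hom_U(\Delta^\lambda,\Delta^\mu)$ is one-dimensional, and then invokes the inequality $\dim\Hom_{\h}(S^\lambda,S^\mu)\ge\dim\Hom_U(\Delta^\lambda,\Delta^\mu)$ coming from Schur--Weyl duality. The paper's own new proof (Theorems~\ref{CPDistinctZero} and~\ref{CPDistinctPrime}) covers only the \emph{separated} case $\lambda_r-\lambda_{r+1}\ge\gamma$ for $a<r\le z$, and uses an entirely different mechanism: both $S^\lambda$ and $S^\mu$ are embedded in the restriction of a single Specht module $S^\nu$ for $\h[n+\gamma]$, and the homomorphism is induced by right multiplication by an explicit polynomial $\LL$ in the Jucys--Murphy elements $L_{n+1},\dots,L_{n+\gamma}$. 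No two-row reduction is made, and the paper explicitly leaves the non-separated case as a conjecture.
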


For the symmetric groups (that is, when $q=1$) this theorem is a classical
result of Carter and Payne~\cite{Payne}. The full $q$-analogue of this
result for the Iwahori-Hecke algebra $\h$ was recently established in the
unpublished thesis of Dixon~\cite{Dixon}. Dixon's proof follows the
original arguments of Carter and Lusztig~\cite{CarterLusztig} and Carter
and Payne~\cite{Payne}. He works with the quantum hyperalgebra $U$ of the
general linear group and he proves that if $\lambda$ and $\mu$ form an
$(e,p)$-Carter-Payne then $\Hom_U(\Delta^\la,\Delta^\mu)$ is one
dimensional, where $\Delta^\nu$ is the Weyl module of~$U$ indexed by the
partition $\nu$. As
$\dim\Hom_{\h}(S^\lambda,S^\mu)\ge\dim\Hom_U(\Delta^\la,\Delta^\mu)$, with
equality if $q\ne-1$, this implies Theorem~\ref{CarterPayne} for
arbitrary~$q$.

The Carter-Payne homomorphisms are very useful and important maps.
Unfortunately little is known about them in general except that they exist.
In this paper we concentrate on separated Carter-Payne pairs, where an
$(e,p)$-Carter-Payne pair $(\lambda,\mu)$ with parameters
$(a,z,\gamma)$ is \textbf{separated} if $\la_r - \la_{r+1} \geq \gamma$ for
$a < r \leq z$.  We begin by giving two new and very explicit descriptions
of Carter-Payne homomorphisms $\theta_{\la\mu}\map{S^\la}S^\mu$ when $\la$
and $\mu$ form a separated Carter-Payne pair. We then use the new
descriptions to prove the following two results, which were known previously
only for the symmetric group algebra when $\gamma=1$~\cite{EM}. 

\begin{theorem} \label{Composition}
Suppose that $\lambda$, $\mu$ and $\sigma$ are
partitions of $n$ such that $\lambda$ and $\sigma$ form a separated
$(e,p)$-Carter-Payne pair with parameters $(a,y,\gamma)$ and that
$\sigma$ and $\mu$ form a separated $(e,p)$-Carter-Payne pair with parameters
$(y,z,\gamma)$, where $a<y<z$ and $\gamma>0$.  Then $\la$ and $\mu$ form a separated $(e,p)$-Carter-Payne pair with parameters $(a,z,\gamma)$ and
$\theta_{\la\sigma}\theta_{\sigma\mu}=\theta_{\la\mu}$.
\end{theorem}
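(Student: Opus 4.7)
The plan has two parts: first verify that $(\la,\mu)$ is itself a separated $(e,p)$-Carter-Payne pair with parameters $(a,z,\gamma)$, and then establish the composition identity using the explicit description of the Carter-Payne homomorphisms developed earlier in the paper.

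For the combinatorial part, applying the two pair-definitions in turn gives
\[ \mu_a = \sigma_a = \la_a+\gamma,\qquad \mu_y = \sigma_y+\gamma = \la_y,\qquad \mu_z = \sigma_z-\gamma = \la_z-\gamma, \]
and $\mu_i=\la_i$ for $i\notin\{a,z\}$, so $(\la,\mu)$ has the shape required of a Carter-Payne pair with parameters $(a,z,\gamma)$. A direct calculation produces $h_{\la\mu}=h_{\la\sigma}+h_{\sigma\mu}$, and since all three pairs share the same $\gamma$ (hence the same $\gamma^*$), the divisibility condition on $h_{\la\mu}$ is inherited from those on $h_{\la\sigma}$ and $h_{\sigma\mu}$. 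The separation condition $\la_r-\la_{r+1}\ge\gamma$ for $a<r\le z$ splits into the range $a<r\le y$, which comes directly from the separation of $(\la,\sigma)$, and the range $y<r\le z$, where $\sigma$ agrees with $\la$ in rows $r$ and $r+1$ so the condition reduces to the separation of $(\sigma,\mu)$.

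For the composition identity I would invoke the explicit description of $\theta_{\la\mu}$ developed in the body of the paper and compute $\theta_{\la\sigma}\theta_{\sigma\mu}$ directly. Each Carter-Payne homomorphism is presented in terms of a distinguished tableau-sum (or multiplication by a concrete element of $\h$) whose construction is transparent in the parameters of the pair; the combinatorial data required for $\theta_{\la\mu}$ with parameters $(a,z,\gamma)$ should be the concatenation of the data for $(a,y,\gamma)$ and $(y,z,\gamma)$. The separation hypotheses are essential here: they guarantee that at every stage the boxes being moved lie in the expected positions, so the intermediate tableaux appearing in the computation are precisely the ones used to define $\theta_{\la\mu}$, with no spurious boundary corrections.

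The main obstacle I expect is to show that the composition equals $\theta_{\la\mu}$ on the nose, rather than a nonzero scalar multiple of it. The cleanest route is probably to track the image of a single leading tableau (such as the top tableau of $S^\la$) through each factor and compare it, term by term, with the defining expression for $\theta_{\la\mu}$; the wide spacing forced by the separation hypothesis rules out the tableau coincidences that could otherwise produce an unexpected scalar. As a sanity check, once the composition is known to be nonzero, one may use the explicit form of $\theta_{\la\mu}$ to pin down the normalization on a single test vector, and then the term-by-term match propagates to the whole module.
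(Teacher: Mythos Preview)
Your combinatorial verification that $(\la,\mu)$ is a separated $(e,p)$-Carter-Payne pair with parameters $(a,z,\gamma)$ is correct and matches what the paper takes for granted.

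The sketch for the composition identity, however, is too vague to count as a proof, and it misses the mechanism that actually makes the argument work. The paper's proof hinges on the factorisation
\[
  \LL=\LLls\,\LLsm,\qquad
  \LLls=\prod_{i=1}^{y-1}\prod_{j=1}^\gamma\bigl(L_{n+j}-[c_i]\bigr),\quad
  \LLsm=\prod_{i=y}^{z-1}\prod_{j=1}^\gamma\bigl(L_{n+j}-[c_i]\bigr),
\]
inside the \emph{single} ambient Specht module $S^\nu$ (with $\nu$ built from $\la$). The nontrivial point is that $\theta_{\la\sigma}$ and $\theta_{\sigma\mu}$ are, \textit{a priori}, defined via Jucys--Murphy multiplications in \emph{different} ambient modules (each via its own $\nu$); the paper's Claims~1 and~2 establish that multiplication by $\LLls$ and $\LLsm$ on $S^\nu$ really do induce these maps (modulo the submodule $M_{l+1}$ in the first case). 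Your proposal speaks of ``concatenation of the data'' but does not supply this step, and without it there is no common arena in which to compose.

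Your discussion of the normalisation is also not quite on target. The polynomials $\beta_{\la\mu}$ are not canonically determined by Lemma~\ref{divides1}, so ``tracking a leading tableau'' cannot by itself pin down an on-the-nose equality. What the paper actually shows (see the Remark after Theorem~\ref{composition}) is that one may \emph{choose} the $\beta$-polynomials so that $\beta_{\la\mu}=\beta_{\la\sigma}\beta_{\sigma\mu}$; with arbitrary choices one only gets $\theta_{\la\mu}=u\,\theta_{\la\sigma}\theta_{\sigma\mu}$ for some nonzero $u\in F$. A further technical wrinkle you do not address is that the product $\LL=\LLls\LLsm$ contributes extra terms, lying in $[c_y-c_z]S^\nu_\calZ$, beyond those accounted for by the two factors separately (Claim~3); these vanish only after specialisation because $c_z-c_y\equiv0\pmod{ep^{\ell_p(\gamma^*)}}$.
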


To state our next result let
$S^\mu=J^0(S^\mu)\supseteq J^1(S^\mu)\supseteq J^2(S^\mu)\supseteq\dots$ be
the Jantzen filtration of $\S^\mu$ (see Section~\ref{JantzenFiltrations}),
and for $0\ne h\in\Z$ define
$$\val_{e,p}(h)=\begin{cases}
  p^{\val_p(h)},&\text{if }e\mid h,\\
  0,&\text{otherwise,}
\end{cases}$$
where $\val_p$ is the usual $p$-adic valuation map (and we set
$\val_0(h)=0$ when $p=0$). Our second main result is the following.

\begin{theorem}\label{Jantzen}
  Suppose that $p\ge0$ and that $\lambda$ and $\mu$ form a separated
  $(e,p)$-Carter-Payne pair with parameters $(a,z,\gamma)$. Then
  $$\im\theta_{\la\mu}\subseteq J^{\delta}(S^\mu),$$
  where $\delta=\val_{e,p}(\la_a-\la_z+z-a+\gamma)-\val_{e,p}(\gamma)$.
\end{theorem}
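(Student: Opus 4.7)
The plan is to lift $\theta_{\la\mu}$ to a suitable integral form and then read the Jantzen depth off the contravariant bilinear form on $S^\mu$.

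Fix a modular system $(\O,\pi,q)$: a discrete valuation ring $\O$ with uniformiser $\pi$ and residue field $F$, together with a unit $q\in\O$ reducing to $\zeta$, chosen so that $\val_\pi([k]_q)=\val_{e,p}(k)$ for every $k\ge1$. Let $\HO$ be the Hecke algebra over $\O$ with parameter~$q$, and $S^\mu_{\O}$ the integral Specht module with its canonical invariant bilinear form $\langle\,,\,\rangle$. Recall the standard characterisation
$$\bar v\in J^k(S^\mu) \iff \text{some lift } v\in S^\mu_{\O} \text{ of } \bar v \text{ satisfies } \langle v,S^\mu_{\O}\rangle\subseteq\pi^k\O.$$

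Next, using the explicit formulas for $\theta_{\la\mu}$ developed in the preceding sections, I would construct an $\HO$-linear lift $\Theta_{\la\mu}\map{S^\la_{\O}}{S^\mu_{\O}}$ with $\Theta_{\la\mu}\otimes_\O F=\theta_{\la\mu}$. The coefficients arising there are products of Gaussian integers $[k]_q$ (and $q$-binomials), all of which lie in $\O$, so the lift is essentially immediate. By the characterisation above, the theorem reduces to the assertion that
$$\langle\Theta_{\la\mu}(u),\,v\rangle \in \pi^\delta\O \qquad\text{for every } u\in S^\la_{\O},\ v\in S^\mu_{\O}.$$

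The core step is evaluating this pairing. Expanding $\Theta_{\la\mu}(u)$ and $v$ in the Murphy basis produces a sum of products of Gaussian integers, which I expect to factor through the $q$-binomial $\binom{h}{\gamma}_q$ (or an essentially equivalent product of $q$-integer ratios) coming from the $q$-Garnir straightening inherent in the explicit formula for $\theta_{\la\mu}$. A short valuation count under the Carter--Payne divisibility hypothesis confirms $\val_\pi(\binom{h}{\gamma}_q)=\val_{e,p}(h)-\val_{e,p}(\gamma)=\delta$, so this identifies the predicted Jantzen depth.

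The main obstacle is this last divisibility analysis: ensuring that no accidental cancellation among tableau contributions depresses the $\pi$-valuation below $\delta$. The separated hypothesis $\la_r-\la_{r+1}\ge\gamma$ for $a<r\le z$ is indispensable here, since it severely restricts which semistandard tableaux of shape $\mu$ occur with nonzero coefficient in $\Theta_{\la\mu}(u)$ and allows the required divisibility to be isolated uniformly in the pairing. Theorem~\ref{Composition} offers a structural simplification by factoring $\theta_{\la\mu}$ through adjacent-row Carter--Payne maps, but because $\val_{e,p}$ is not additive under sums this alone does not yield the sharp bound, so a direct computation in the adjacent-rows base case $z=a+1$ still seems to be required.
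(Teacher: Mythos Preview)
Your framework is correct---lift to a DVR, use the characterisation of the Jantzen filtration via the form, and bound $\val_\pi\langle\Theta_{\la\mu}(u),v\rangle$---and this is exactly how the paper proceeds. But your ``core step'' is where the real work lies, and your proposal does not supply it: you say you \emph{expect} the pairing to factor through $\binom{h}{\gamma}_q$, but you give no mechanism for producing that factor. Expanding in the Murphy basis and hoping for a uniform divisibility among many tableau contributions is not a proof; the sums involved (see Proposition~\ref{ImageDistinct}) are intricate, and there is no reason a priori that each term, or even their sum, carries the factor $[h]_q$.

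The paper's device for extracting $[h]_q$ is quite different and worth knowing. Rather than attacking the pairing head-on, one rewrites $\LL$ using the identity $[c_1]=[c_z]+q^{c_z}[c_1-c_z]$ to obtain $\LLO=\LLOp-q^{c_z}[h]_{\q+\zeta}\LLOpp$, where $\LLOp$ is a variant of $\LL$ in which the factor $(L_{n+\gamma}-[c_1])$ is replaced by $(L_{n+\gamma}-[c_z])$. The point is that $\LLOp$ annihilates every $m_\s$ with $\row_\s(n+\gamma)>1$ (Lemma~\ref{killing}); in particular it kills $x\in S^\mu_\O$. Now use that the Jucys--Murphy elements are self-adjoint for $\langle\,,\,\rangle_\nu$: one has $\langle m_\t\LLO,x\rangle_\nu=\langle m_\t,x\LLO\rangle_\nu$, and the $\LLOp$ piece vanishes, leaving exactly $-q^{c_z}[h]_{\q+\zeta}\langle m_\t,x\LLOpp\rangle_\nu$. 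This is how $[h]$ appears as a genuine factor, with no cancellation issues. The remaining bookkeeping---accounting for the denominator $\beta_{\la\mu}$ when $\gamma\ge e$---is handled by computing $x\LLOpp$ explicitly (easy, since $x\in S^\mu_\O$ is an eigenvector for each $L_{n+j}$) and invoking Corollary~\ref{CMgamma3}. Your suggestion to reduce to $z=a+1$ via Theorem~\ref{Composition} is not used and, as you yourself note, would not give the sharp bound.
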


The key observation in our construction of the Carter-Payne homomorphisms,
 which is due to Ellers and Murray~\cite{EllersMurray}, 
is that the Specht modules $S^\la$ and
$S^\mu$ both appear in the restriction of a Specht module~$S^\nu$
of~$\h[n+\gamma]$. Starting from this observation we are able show that the
Carter-Payne homomorphism $\theta_{\la\mu}\map{S^\la}S^\mu$ is induced by
an $\h$-module endomorphism of $S^\nu$ which is given by right
multiplication by a polynomial in the Jucys-Murphy elements
$L_{n+1},\dots,L_{n+\gamma}$ of~$\h[n+\gamma]$. Using this description of
the Carter-Payne maps we are able to prove the two theorems above as well
as describe these maps as explicit linear combinations of semistandard
homomorphisms.  Thus we give a new proof of Theorem~\ref{CarterPayne}, 
when $\la$ and $\mu$ are a separated pair, which
takes place entirely within the Hecke algebra.  

We now describe the contents of this paper in more detail.  Section~2 sets
up the basic notation and machinery that is used throughout the paper.
In Theorem~\ref{CPDistinctZero} and Theorem~\ref{CPDistinctPrime} we
show that if $(\la,\mu)$ is a separated $(e,p)$-Carter-Payne pair
then the corresponding
Carter-Payne homomorphism is given by right multiplication by a
polynomial in the Jucys-Murphy elements of $\h[n+\gamma]$. We prove
these results by writing the Carter-Payne homomorphism
$\theta_{\la\mu}$ as an explicit linear combination of semistandard
homomorphisms. These results are proved modulo a result which
describes how the Jucys-Murphy elements act on the Specht modules
(Proposition~\ref{ImageDistinct}) and a technical result which allows
us to divide our maps by certain polynomial coefficients when $p>0$
(Lemma~\ref{divides1}). Using these results we prove our two main
theorems about composing Carter-Payne homomorphisms and the connection
between these maps and the Jantzen filtration. Section~3 is the
computational heart of the paper which proves the detailed technical
results which describe the action of the Jucys-Murphy elements on the
Specht modules which are need to prove our main theorems. The
results in this section are likely to be of independent interest.

\section{Carter-Payne homomorphisms and Jucys-Murphy elements} 
In this section we define the Hecke algebra and the Specht modules and
reduce the proofs of our main results to some technical statements
which are proved in the next section.

\subsection{The Hecke algebra}\label{Hecke}
For each integer $n>0$ let $\sym[n]$ be the symmetric group of
degree~$n$. The symmetric group $\sym[n]$ is generated by the simple
transpositions $s_1,s_2,\dots,s_{n-1}$, where $s_i=(i,i+1)$ for 
$1\le i<n$. If $w\in\sym[n]$ then $s_{i_1}\dots s_{i_k}$ is a
\textbf{reduced expression} for~$w$ if $w=s_{i_1}\dots s_{i_k}$ and
$k$ is minimal with this property. In this case, the \textbf{length}
of $w$ is~$\ell(w)=k$.

Suppose that $\q$ is an indeterminate over $\Z$ and let $\calZ=\Z[\q,\q^{-1}]$
be the ring of Laurent polynomials in~$\q$. The 
\textbf{generic Iwahori-Hecke algebra} of $\sym[n]$ is the unital associative
$\calZ$-algebra $\h^\calZ$ with generators
$T_1,\dots,T_{n-1}$ which are subject to the relations
$$ (T_i-q)(T_i+1)=0, \quad T_jT_{j+1}T_j=T_{j+1}T_jT_{j+1}
       \quad\text{and}\quad T_iT_j=T_jT_i,$$
where $1\le i<n$, $1\le j< n-1$ and $|i-j|\ge 2$. The Hecke algebra
$\h$ is free as an $\calZ$-module with basis $\set{T_w|w\in\Sym_n}$,
where $T_w=T_{i_1}\dots T_{i_k}$ and $s_{i_1}\dots s_{i_k}$ is a
reduced expression for $w$; see, for example,
\cite[Chapt.~1]{M:ULect}. 

Now suppose that $R$ is an arbitrary ring and that $q_R$ is an
invertible element of~$R$. Define $\h^R(q_R)=\h^\calZ\otimes_\calZ R$,
where we consider $R$ as a $\calZ$-algebra by letting $q$ act as
multiplication by $q_R$. We say that $\h$ is obtained from $\h^\calZ$
by \textbf{specialization} at $\q=q_R$.  By the remarks above,
$\h^R(q_R)$ is a unital associative $R$-algebra which is free as an
$R$-module with basis $\set{T_w\otimes 1|w\in\sym}$. Typically, we
abuse notation and write $T_w$ instead of~$T_w\otimes 1$, for
$w\in\sym$. 

In this paper we are most interested in the algebra $\h=\h^F(\zeta)$,
where $F$ is a field of characteristic $p\ge0$ and $0\ne\zeta\in F$.
Define
$$e=\min\set{f\ge2|1+\zeta+\dots+\zeta^{f-1}=0},$$
and set $e=0$ if $1+\zeta+\dots+\zeta^{f-1}\ne0$ for all $f\ge2$.
Then $\h$ is (split) semisimple if and only if $e>n$ or $e=0$; see, for
example, \cite[Cor.~3.24]{M:ULect}). Henceforth, we assume that 
$2\leq e\le n$. In particular, $\h$ is not semisimple. 

Observe that if $\zeta=1$ then $e=p$ and $\h\cong F\sym[n]$. If
$\zeta\ne1$ then $\zeta$ is a primitive $e^{\text{th}}$ root of unity
in~$F$.  

\subsection{Tableaux combinatorics}\label{combinatorics}
A \textbf{composition} of~$n$ is a sequence
$\lambda=(\lambda_1,\lambda_2,\dots)$ of non-negative integers which sum
to~$n$ and $\lambda$ is a \textbf{partition} if
$\lambda_1\ge\lambda_2\ge\dots$. The diagram of a partition $\lambda$ is the set
$\Diag(\lambda)=\set{(r,c)|1\le c\le\lambda_r, \text{for }r\ge1}.$
A (row standard) \textbf{$\lambda$-tableau} is a map 
$\S\map{\Diag(\lambda)}\N$ such that $\S(r,c)\le\S(r,c')$, whenever 
$c\le c'$. We identify a $\lambda$-tableau with a labeling of the
diagram of $\lambda$ by~$\N$, and in this way we can talk of the rows and
columns of~$\S$.  A $\lambda$-tableau $\S$ is:
\begin{enumerate}
  \item \textbf{semistandard} if the entries in $\S$ are strictly
    increasing down columns.  
  \item \textbf{standard} if $\S\map{\Diag(\la)}\{1,2,\dots,n\}$ is a
    bijection and the entries in $\S$ are strictly increasing down columns;
\end{enumerate}
A $\lambda$-tableau has \textbf{type} $\mu=(\mu_1,\mu_2,\dots)$ if it
has $\mu_i$ entries equal to~$i$, for $i\ge1$. If $\S$ is a
$\lambda$-tableau let $\Shape(\S)=\lambda$ and if $k\geq 0$ let 
$\S_{\downarrow k}$ be the subtableau of $\S$ containing the numbers
$1,2,\dots,k$.

The following notation will help us keep track of certain entries in our tableaux. 

\begin{notation}
Suppose that $\S$ is a tableau and that $X$ and $R$ are sets of
positive integers. Let~$\S^X_R$ be the number of entries in row $r$
of~$\S$ which are equal to some $x$, for some $r\in R$ and some 
$x\in X$.  We further abbreviate this notation by setting 
$\S^{\le x}_{>r}=\S_{(r,\infty)}^{[1,x]}$, $\S^x_r=\S^{\{x\}}_{\{r\}}$ 
and so on.
\end{notation}

Let $\mathcal{T}(\lambda,\mu)$ be the set of $\lambda$-tableau of type 
$\mu$ and $\SStd(\lambda,\mu)$ the set of semistandard 
$\lambda$-tableaux of type $\mu$.  Let $\Std(\la)=\SStd(\la,(1^n))$ 
be the set of standard tableaux and $\rowstd(\la)=\mathcal{T}(\la,(1^n))$ 
the set of tableaux of type $(1^n)$. The \textbf{initial $\la$-tableau}
is the standard $\la$-tableau $\t^\la$ obtained by entering the numbers
$1,2,\ldots,n$ in increasing order, from left to right, along the rows 
of~${\Diag(\lambda)}$.

If $\s$ is a tableau and $\s(r,c)=k$ then define $\row_\s(k)=r$.  For
any subset $I \subseteq \set{1,2,\ldots,n}$, the entries in $I$
are in \textbf{row order} in $\s$ if $\row_\s(i)\le \row_\s(j)$
whenever $i<j\in I$.  For example,~$\t^\la$ is the unique
$\la$-tableau which has $1,2,\dots,n$ in row order.

There is an action of $\Sym_n$ on $\rowstd(\lambda)$, from the right,
given by defining $\s w$ to be the $\lambda$-tableau obtained from
$\s$ by acting on the entries of $\s$ by $w$ and then reordering the
entries in each row, for $\s\in\rowstd(\la)$ and $w\in\Sym_n$. If
$\s\in\rowstd(\la)$ define $d(\s)$ to be the unique element
of~$\Sym_n$ of minimal length such that $\s=\t^\la d(\s)$; such an element
exists, for example, by \cite[Prop.~3.3]{M:ULect}.  The permutation $d(\s)$ 
is the unique element of $\sym$ such that  $\s=\t^\la d(\s)$ and
$(i)d(\s)<(j)d(\s)$ whenever $i<j$ lie in the same row of $\s$. Let 
$\Sym_\la=\Sym_{\la_1}\times\Sym_{\la_2}\times \ldots$ be the
\textbf{Young subgroup} of $\sym$ associated to $\la$.

\subsection{Specht modules}
For each pair of tableaux $\s,\t\in\Std(\lambda)$, for $\lambda$ a
partition of~$n$, let $m_{\s\t}=T_{d(\s)^{-1}}m_\lambda T_{d(\t)}$,
where
$$m_\la = \sum_{w\in\Sym_\la}T_w.$$
Murphy showed that $\set{m_{\s\t}|\s,\t\in\Std(\la), \text{ for $\la$ a partition
of }n}$ is a basis of $\h$~\cite{Murphy,M:ULect}. The basis
$\{m_{\s\t}\}$ is a cellular basis of $\h$ with respect to the dominance
ordering where if $\la$ and $\mu$ are partitions then $\mu \gedom \la$ if
$$\sum_{i=1}^j\mu_i\ge\sum_{i=1}^j\la_i,$$
for all $j\ge1$. Write $\mu\gdom\la$ if $\mu \gedom \la$ and $\mu\ne \la$. Let $\hlam$ be the two-sided ideal of $\h$ with basis
$\set{m_{\s\t}| \s,\t\in\Std(\mu) \text{ for some }\mu\gdom\la}$.

Fix a partition $\lambda$ of $n$. The \textbf{Specht module}
$S^\lambda_F$ is the $\h$-submodule of $\h/\hlam$ generated by
$m_\la+\hlam$. For every tableau $\s\in\rowstd(\la)$ define 
$m_\s= m_\la T_{d(\s)}+\hlam$. Then $m_\s\in S^\lambda$ and
$\set{m_\t|\t\in\Std(\lambda)}$ is a basis of $S^\la_F$ by, for
example, \cite[Prop.~3.22]{M:ULect}. This construction of the Specht
module works over an arbitrary ring. In particular, we have a Specht
module $S^\la_\calZ$ for the generic Hecke algebra $\h^\calZ$ and
$S^\la_F\cong S^\la_\calZ\otimes_\calZ F$ as $\h$-modules. Usually, we write
$S^\la=S^\la_F$ unless we want to emphasize the ring that $S^\la$ is
defined over. 

We emphasize, for the readers convenience, that throughout this paper we
follow~\cite{M:ULect} and work with the Specht modules that arise as the
cell modules for the Murphy basis.  These modules are \textit{dual} to the
classically defined Specht modules considered in~\cite{DJ:reps,James}. Our
results can be translated into the corresponding results for the classical
Specht modules by conjugating the partitions involved and taking duals;
see, for example, \cite[Lemma~3.4]{LM:rowhoms}.

Define the \textbf{Jucys-Murphy elements} $L_1,\dots,L_{n}$ of
$\h$ by setting $L_1=q^{-1}T_1$ and
$L_{k+1}=q^{-1}T_k(1+L_kT_k)$. Then $L_1,\dots,L_{n}$ generate
a commutative subalgebra of $\h$; see, for example,
\cite[Prop.~3.26]{M:ULect}. The Jucys-Murphy elements
$L_k$ are important for us because 
they act on the Specht modules via triangular matrices.

If $R$ is any ring, $a\in R$ and $k\in\Z$ then the 
\textbf{Gaussian integer} $[k]_a$ is 
$$[k]_a=\lim_{t\to a}\tfrac{t^k-1}{t-1},$$
where $t$ is an indeterminate over~$R$.  If $k\ge0$ set $[0]_a^!=1$
and let $[k]_a^!=[k-1]_a^![k]_a$. We are most interested in these
scalars when $R=\calZ$ and $r=\q$, so we set $[k]=[k]_\q$ and
$[k]^!=[k]_\q^!$, for $k\in\Z$.

\subsection{Constructing Carter-Payne homomorphisms}\label{ProofSection}
Suppose that $\la$ is a partition of~$n$ and let $M^\la=m_\la\h$ be the
corresponding permutation module for~$\h$. Then
$M^\la$ has basis $\set{m_\la T_{d(\t)}|\t\in\rowstd(\la)}$ and
there is a surjective homomorphism $\pi_\la\map{M^\la}{S^\la}$ given by
$\pi_\la(m_\la T_{d(\t)})=m_\t$, for $\t\in\rowstd(\la)$. 

Now if $\mu$ is a partition of $n$ and $\t \in \Std(\mu)$, define
$\la(\t)$ to be the $\mu$-tableau obtained by replacing each entry 
in $\t$ by its row index in $\t^\la$. If $\T$ is a $\mu$-tableau of type
$\la$ define
\[m_\T =\sum_{\substack{\t \in \rowstd(\mu)\\\la(\t)=\T}} m_\t.\] 
By definition $m_\T\in S^\mu$.  

If $\T\in\SStd(\mu,\lambda)$ let $\varphi_\T\in\Hom_{\h}(M^\lambda,S^\mu)$ be the homomorphism determined by $\varphi_\T(m_{\la})=m_\T$ and let $\EHom{n}(M^\lambda,S^\mu)$ be the subspace of $\Hom_{\h}(M^\lambda,S^\mu)$ spanned by $\set{\varphi_\T | T \in \SStd(\mu,\la)}$.
Let $\EHom{n}(S^\lambda,S^\mu)$ be the space of homomorphisms $\varphi \in \Hom_{\h}(S^\lambda,S^\mu)$ such that $\pi_\la \varphi \in \EHom{n}(M^\lambda,S^\mu)$.

To reprove Theorem~\ref{CarterPayne} for the separated $(e,p)$-Carter-Payne pair
$(\la,\mu)$ we use the following result. This is purely a matter of
notational convenience as the proof that we give can be made to work
without making use of this proposition (cf. the proof of
Theorem~\ref{Composition} in Section~\ref{CompositionSection} below).

\begin{proposition} \label{RowRemoval}
Suppose that $\la$ and $\mu$ are partitions of $m$ such that
$\la_i=\mu_i$, whenever $1\le i<a$ or $i>z$, for some integers $a<z$.
Define $\hat{\la}=(\la_a,\la_{a+1},\ldots,\la_z)$ and
$\hat{\mu}=(\mu_a,\mu_{a+1},\ldots,\mu_z)$ and let
$n=\hat\la_a+\dots+\hat\la_z=\hat\mu_a+\dots+\hat\mu_z$.
Then $$\EHom{m}(S^\la,S^\mu)\cong_F \EHom{n}(\S^{\hat{\la}},\S^{\hat{\mu}}).$$
\end{proposition}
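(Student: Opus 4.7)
The strategy is to transport the proposition to a statement about semistandard tableaux, use a bijection between the indexing sets to build a vector space isomorphism of the ambient semistandard Hom spaces, and then verify that this isomorphism preserves the subspaces consisting of homomorphisms that descend to the Specht modules.

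First I would construct a bijection $\SStd(\mu,\la)\to\SStd(\hat\mu,\hat\la)$, defined on $\T\in\SStd(\mu,\la)$ by restricting to rows $a,\dots,z$ of $\T$ and relabelling each entry $i$ as $i-a+1$. The content of this step is the claim that the hypothesis $\la_i=\mu_i$ for $i<a$ or $i>z$ forces the outer rows of any such $\T$. For $i<a$ this is an induction: since entries equal to $r$ can only appear in rows $\le r$ and row $r$ has entries $\ge r$, once rows $1,\dots,i-1$ are known to be forced, the $\la_i=\mu_i$ entries equal to $i$ must fill row $i$ exactly. A parallel count shows that the $\la_{z+1}+\la_{z+2}+\cdots$ entries greater than $z$ exactly fill the $\mu_{z+1}+\mu_{z+2}+\cdots$ cells in rows $>z$, after which the analogous induction starting from row $z+1$ forces each row $i>z$ to consist of $i$'s. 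The inverse bijection prepends and appends these forced rows to any $\hat\T\in\SStd(\hat\mu,\hat\la)$.

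Second, I would lift this bijection to a linear isomorphism $\Phi\map{\EHom{m}(M^\la,S^\mu)}{\EHom{n}(M^{\hat\la},S^{\hat\mu})}$ by $\varphi_\T\mapsto\varphi_{\hat\T}$; this is automatically a vector space isomorphism because both source and target are free with bases indexed by the bijecting sets of semistandard tableaux.

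The main obstacle is to show that $\Phi$ restricts to an isomorphism $\EHom{m}(S^\la,S^\mu)\cong\EHom{n}(S^{\hat\la},S^{\hat\mu})$. By definition this amounts to showing that $\varphi=\sum_\T c_\T\varphi_\T$ annihilates $\ker\pi_\la$ if and only if $\Phi(\varphi)=\sum_\T c_\T\varphi_{\hat\T}$ annihilates $\ker\pi_{\hat\la}$. I would partition the generators of $\ker\pi_\la$ --- the Garnir-type relations expressing non-standard $m_\t$ in terms of the standard basis of $S^\la$ --- into three families according to which rows of $\la$ they disturb: those supported entirely in rows $<a$, entirely in rows $>z$, or involving the middle rows $a,\dots,z$. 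The relations in the forced outer rows are satisfied by each individual $\varphi_\T$ in $S^\mu$ because the outer rows of $\T$ match those of $\la$, so they impose no constraint on the $c_\T$. The relations involving the middle rows correspond, under $\T\leftrightarrow\hat\T$, to the generators of $\ker\pi_{\hat\la}$, and $\varphi_\T$ acts on them in the same way as $\varphi_{\hat\T}$ acts on their counterparts after the identification between the middle parts of $S^\mu$ and $S^{\hat\mu}$. Hence the defining equations on the coefficients $c_\T$ are identical on both sides, and $\Phi$ restricts to the required isomorphism.
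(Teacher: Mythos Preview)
The paper does not prove this proposition directly; it simply cites \cite[Theorem~3.2 and Lemma~3.4]{LM:rowhoms} (with a cf.\ to Donkin). Your proposal is a self-contained reconstruction of what that reference establishes, so the two are different in presentation: the paper defers to an existing row-and-column removal theorem, whereas you sketch the argument from scratch via the tableau bijection and the matching of kernel relations.

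Your outline is sound and is essentially the strategy of the cited paper. Step~1 (forcing of the outer rows) and Step~2 are correct as stated. Step~3 is where you are somewhat imprecise: your trichotomy ``supported entirely in rows $<a$, entirely in rows $>z$, or involving the middle rows'' does not make explicit what happens with the Garnir relations that straddle the boundaries, namely those mixing rows $a-1$ and $a$, or rows $z$ and $z+1$. These do not correspond to relations in $\ker\pi_{\hat\la}$, so they must be shown to be automatically satisfied by every individual $\varphi_\T$, not merely by those combinations satisfying the middle relations. This is indeed the case---the forced row of $(a-1)$'s in $\T$ has length $\mu_{a-1}=\la_{a-1}$, so the column-strictness in $S^\mu$ disposes of the rows-$(a-1,a)$ relation, and similarly at the bottom boundary---but you should say so rather than fold it silently into the third family. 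With that caveat, your argument is correct and recovers what the paper imports from \cite{LM:rowhoms}.
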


\begin{proof} 
This follows from (the proof of)~\cite[Theorem 3.2 and Lemma 3.4]{LM:rowhoms};
cf.~\cite[Prop.~10.4]{Donkin:tilting}.
\end{proof}

Therefore, when constructing Carter-Payne homomorphisms it is
enough to show that $\EHom{n}(\S^{\la},\S^{\mu})\ne0$ for
partitions $\lambda$ and $\mu$ of~$n$ which form a
separated $(e,p)$-Carter-Payne pair with parameters $a=1$,
$z=\max\set{i>0|\lambda_i\ne0}$ and $\gamma>0$.   
For the rest of Section \ref{ProofSection} we
fix such a pair. 
We define $\nu$ to be the partition of~$n+\gamma$ given by
$$\nu_i = \begin{cases} \la_i+\gamma, & \text{if }i=1, \\
                        \la_i, & \text{otherwise.}
\end{cases}$$

There is a natural embedding $\h\hookrightarrow\h[n+\gamma]$. Thus we
can consider any $\h[n+\gamma]$-module as an $\h$-module by restriction.
We need the following well-known result -- it is an easy corollary of
\cite[Prop.~6.1]{M:ULect}.    

\begin{lemma}\label{restriction} As an $\h$-module the Specht module
  $S^\nu$ has a filtration
  $$S^\nu=M_0\supset M_1\supset\dots\supset M_k\supset 0,$$
  such that $M_i/M_{i+1}\cong S^{\tau_i}$, for some partition $\tau_i$ 
  of~$n$, for $0\le i\le k$. Moreover $S^\lambda\cong M_0/M_1$, 
  $S^\mu\cong M_k$, $M_1$ has basis 
  $\set{m_\t|\t\in\Std(\nu)\text{ and }\Shape(\t_{\downarrow n})\ne\lambda}$, 
  and $M_k$ has basis 
  $\set{m_\t|\t\in\Std(\nu)\text{ and }\t_{\downarrow n}\in\Std(\mu)}$.
\end{lemma}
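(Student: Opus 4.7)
The plan is to iterate the one-step branching rule \cite[Prop.~6.1]{M:ULect} a total of $\gamma$ times. That result gives, for any partition $\nu'$ of~$m$, a filtration of $S^{\nu'}{\downarrow}_{\h[m-1]}$ whose factors are the Specht modules $S^{\nu'-\alpha}$ for $\alpha$ a removable node of $\nu'$, arranged so that the more dominant $\nu'-\alpha$ sits lower in the filtration, and so that each submodule in the filtration has basis consisting of those Murphy basis elements $m_\t$ for which $\Shape(\t_{\downarrow m-1})$ lies in a prescribed dominance-upper-set. Applying this to restrict $S^\nu$ from $\h[n+\gamma]$ all the way down to $\h[n]$, and refining the filtration at each step, yields a chain $S^\nu=M_0\supset M_1\supset\dots\supset M_k\supset 0$ of $\h[n]$-submodules whose factors are Specht modules $S^{\tau_i}$ for partitions $\tau_i$ of $n$ obtainable from $\nu$ by successively removing $\gamma$ removable boxes.

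I would then identify the extremes of this filtration with $\la$ and $\mu$ using a partial-sum calculation. Since
$$\sum_{i=1}^j\tau_i\;=\;\sum_{i=1}^j\nu_i\;-\;\bigl(\text{boxes removed from rows }1,\dots,j\bigr),$$
the partial sums are simultaneously maximized, for every $j$, by performing every removal from the bottom nonempty row~$z$; the separation hypothesis $\la_z\ge\gamma$ ensures this is a valid sequence of removals, and the resulting partition is precisely $\mu$. Hence $\mu$ is the unique most-dominant partition appearing in the filtration. Dually, $\la$ arises by performing every removal in row~$1$ and is the unique least-dominant such partition. Choosing the filtration compatibly with dominance therefore forces $M_k\cong S^\mu$ at the bottom and $M_0/M_1\cong S^\la$ at the top.

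For the bases, I would track the Murphy basis through the iteration: at each stage, the basis of each submodule in the one-step filtration consists of those $m_\t$ with $\Shape(\t_{\downarrow m-1})$ in an upper-set of dominance, and after all $\gamma$ steps this gives that the submodule corresponding to a partition $\tau$ of $n$ is spanned by $\{m_\t:\Shape(\t_{\downarrow n})\gedom\tau\}$. Taking $\tau=\mu$ gives the stated basis of $M_k$; the basis of $M_1$ then follows from the complementary identification $M_0/M_1\cong S^\la$ having basis $\{m_\t+M_1:\t_{\downarrow n}\in\Std(\la)\}$. The main obstacle is the bookkeeping needed to show that the iterated one-step filtration can genuinely be refined into a single filtration respecting the global dominance order, and that the one-step Murphy basis description passes cleanly through all $\gamma$ iterations; both issues reduce to the uniqueness of $\la$ and $\mu$ as extremes in the relevant dominance poset, so that no ambiguity arises at the top or bottom of the filtration.
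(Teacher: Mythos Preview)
Your proposal is correct and is precisely the argument the paper has in mind: the paper states the lemma without proof, saying only that it ``is an easy corollary of \cite[Prop.~6.1]{M:ULect}'', and your sketch of iterating that one-step branching rule $\gamma$ times, together with the dominance extremality of $\lambda$ and $\mu$, is exactly how one unpacks that citation. Your caveat is well placed: the intermediate claim that each $M_i$ has basis $\{m_\t:\Shape(\t_{\downarrow n})\gedom\tau_i\}$ is not literally true for all $i$ (since the iterated filtration is indexed by removal paths, not by the final partition), but as you note this does not matter because only the top and bottom terms are needed, and there the uniqueness of $\lambda$ and $\mu$ as dominance extremes makes the description unambiguous.
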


Hence, following Ellers and Murray~\cite[\Sect~3]{EllersMurray}, we
have the following elementary but very useful observation.

\begin{corollary}\label{EllersMurray}
  Let $\nu$ be the partition of $n+\gamma$ defined above and suppose that
  there exists a non-zero homomorphism $\theta\in \End_{\h}(S^\nu)$
  such that $M_1\subseteq\ker(\theta)$ and 
  $\im(\theta)\subseteq M_k$.  
  Then $\Hom_{\h}(S^\la,S^\mu)\ne0$.  
\end{corollary}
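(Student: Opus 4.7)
The proof is essentially a diagram-chase using Lemma~\ref{restriction}, so the plan is short and has no real obstacle. The plan is to factor $\theta$ through the top quotient of the filtration and co-restrict it to the bottom submodule, both of which are identified as $S^\lambda$ and $S^\mu$ respectively.

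First I would use the hypothesis $M_1 \subseteq \ker(\theta)$. Since $\theta\in\End_{\h}(S^\nu)$ vanishes on $M_1$, it descends to an $\h$-module homomorphism
\[
\tilde\theta\map{S^\nu/M_1}{S^\nu}.
\]
By Lemma~\ref{restriction} we have a canonical isomorphism $S^\nu/M_1 \cong S^\lambda$ of $\h$-modules, so composing with this gives a map $\tilde\theta'\map{S^\lambda}{S^\nu}$.

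Next I would use $\im(\theta)\subseteq M_k$. This means $\im(\tilde\theta')\subseteq M_k$, so $\tilde\theta'$ corestricts to an $\h$-module homomorphism $S^\lambda\to M_k$. Applying the isomorphism $M_k\cong S^\mu$ from Lemma~\ref{restriction} yields the desired homomorphism $\bar\theta\map{S^\lambda}{S^\mu}$.

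Finally I would check that $\bar\theta\ne0$. The only substantive point is to track non-vanishing through the two identifications, but this is automatic: the image of $\bar\theta$ corresponds, under the isomorphism $M_k\cong S^\mu$, to $\im(\theta)$, and $\im(\theta)$ is non-zero since $\theta\ne0$. Hence $\Hom_{\h}(S^\lambda,S^\mu)\ne0$. There is no genuinely hard step here; the content of the corollary lies entirely in Lemma~\ref{restriction}, which supplies the two isomorphisms $S^\nu/M_1\cong S^\lambda$ and $M_k\cong S^\mu$ required for the factorisation.
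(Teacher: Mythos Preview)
Your proposal is correct and is precisely the argument the paper has in mind; indeed, the paper gives no explicit proof, presenting the result as an ``elementary but very useful observation'' following immediately from Lemma~\ref{restriction}. Your diagram chase---factoring through $S^\nu/M_1\cong S^\lambda$ using $M_1\subseteq\ker(\theta)$, corestricting to $M_k\cong S^\mu$ using $\im(\theta)\subseteq M_k$, and noting that non-zeroness is preserved---is exactly the intended justification.
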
 

Set $c_i = \nu_i -i$, for $1\le i\le z$.
(Thus, $c_i$ is the content of the $i^{\text{th}}$ removable node of~$\nu$; see the remarks before Lemma~\ref{permutation}.) Now define
\[\LL=\prod_{i=1}^{z-1}\prod_{j=1}^{\gamma}\Big(L_{n+j}-[c_i]\Big).\]

\begin{lemma}\label{MurphyCommutes}
Suppose that $1\le k<n+\gamma$ and $k\ne n$. 
Then $T_k \LL = \LL T_k$.
\end{lemma}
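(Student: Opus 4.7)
The plan is to split the argument according to where $k$ lies relative to the block of Jucys-Murphy indices $\{n+1,\dots,n+\gamma\}$ that actually appears in $\LL$, using only two facts about the $L_j$'s: first, $T_k$ commutes with $L_j$ whenever $j\ne k,k+1$; second, $T_k$ commutes with any polynomial that is symmetric in $L_k$ and $L_{k+1}$ (equivalently, $T_k$ commutes with $L_k+L_{k+1}$ and with $L_kL_{k+1}$). Both are standard and are essentially \cite[Prop.~3.26]{M:ULect}; the Jucys-Murphy elements themselves pairwise commute, so we may rearrange the factors of $\LL$ at will.

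First I would dispose of the easy range. If $1\le k<n$ then $k+1\le n<n+j$ for every $j\in\{1,\dots,\gamma\}$, so $T_k$ commutes with each factor $L_{n+j}-[c_i]$ individually, and hence with $\LL$.

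Next comes the interesting range $n<k<n+\gamma$, where $k=n+j$ for some $1\le j<\gamma$. Here I would rewrite
\[
\LL=A_j\cdot B_j,\qquad
A_j=\prod_{i=1}^{z-1}\(L_{n+j}-[c_i]\)\(L_{n+j+1}-[c_i]\),\quad
B_j=\prod_{i=1}^{z-1}\prod_{\substack{1\le j'\le\gamma\\ j'\ne j,j+1}}\(L_{n+j'}-[c_i]\),
\]
which is legitimate because all the $L$'s commute. Every factor of $B_j$ involves some $L_{n+j'}$ with $j'\notin\{j,j+1\}$, equivalently $n+j'\notin\{k,k+1\}$, so by the first fact $T_k$ commutes with $B_j$. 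For $A_j$, each single-$i$ factor expands as
\[
\(L_{n+j}-[c_i]\)\(L_{n+j+1}-[c_i]\)=L_{n+j}L_{n+j+1}-[c_i](L_{n+j}+L_{n+j+1})+[c_i]^2,
\]
which is a polynomial in $L_k+L_{k+1}$ and $L_kL_{k+1}$, hence commutes with $T_k$ by the second fact. Since these single-$i$ factors all commute with each other, $T_k$ commutes with $A_j$, and therefore with $\LL=A_jB_j$.

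The only possible obstacle is the bookkeeping that makes the ``symmetric-in-$L_k,L_{k+1}$'' trick go through, and this is precisely why the hypothesis $k\ne n$ is essential: when $k=n$ the factor $(L_{n+1}-[c_i])$ is present in $\LL$ but its would-be partner $(L_n-[c_i])$ is not, so no symmetrisation in $L_n,L_{n+1}$ is available and the argument genuinely breaks down. With $k\ne n$ excluded, no genuine difficulty remains and the lemma follows.
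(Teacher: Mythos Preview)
Your proof is correct and follows essentially the same approach as the paper: the paper's one-line argument simply observes that, by the commutation relations in Lemma~\ref{Murphyformulae} (parts (b), (e), (f)), $T_k$ commutes with each product $(L_{n+1}-c)\cdots(L_{n+\gamma}-c)$ whenever $k\ne n$, and you have written out exactly the case split and the symmetric-in-$L_k,L_{k+1}$ verification that makes this work.
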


\begin{proof}
  By Lemma~\ref{Murphyformulae}, below, if $k\ne n$ then $T_k$ commutes with
  $(L_{n+1}-c)\dots(L_{n+\gamma}-c)$, for any $c\in F$.
\end{proof}

Hence, right multiplication by $\LL$ induces an $\h$-endomorphism of
$S^\nu$.  The following definitions allow us to describe this map and
(if necessary) to modify it so as to produce an endomorphism $\theta_{\la\mu}$
which satisfies the conditions of Lemma \ref{EllersMurray}.

Let $\eta$ be a partition of $n$. Write $\eta\subseteq\nu$ if
$\Diag(\eta)\subseteq\Diag(\nu)$; equivalently, $\eta_i\le\nu_i$, for
$i\ge1$. If $\eta \subseteq \nu$, define $\eta+1^\gamma=(\eta_1,\ldots,\eta_k,0^{z-k},1^\gamma)$, where $k=\max\set{i|\eta_i >0}$.  Define~$\tnu_\eta$ to be the standard tableau which agrees with $\t^\eta$ where $\Diag(\eta)$ and $\Diag(\nu)$ coincide, with the numbers $n+1,\ldots,n+\gamma$ entered in row order in the remaining nodes of $\Diag(\nu)$. 
A $\nu$-tableau $\t$ is \textbf{almost initial} if $\t=\tnu_\eta$, for
some partition~$\eta$ of $n$. 

Now suppose that $\eta$ is a partition of $n$ such that $\eta\subseteq\nu$.
Define
$$\SStd^\nu(\mu,\eta)=\set{\S\in\SStd(\nu,\eta+1^\gamma)|
\Shape(\S_{\downarrow z})=\mu}.$$
That is, $\SStd^\nu(\mu,\eta)$ is the set of semistandard $\nu$-tableaux of
type $\eta+1^\gamma$ obtained by adding nodes labeled
$z+1,\dots,z+\gamma$ to the bottom of a semistandard $\mu$-tableaux of type $\eta$.
Similarly, if $\eta\subseteq\nu$ let
$\Std_\eta(\nu)=\set{\t\in\Std(\nu)|\Shape(\t_{\downarrow n})=\eta}$.

The following elegant result will allow us to construct Carter-Payne
homomorphisms. It will be proved with less elegance in the following
sections. The number $\S_r^{(r,z]}$, which is the number of entries in
row~$r$ of $\S$ contained in $(r,z]$, is defined in
Section~\ref{combinatorics}.

\begin{proposition} \label{ImageDistinct}
  Suppose that $\eta\subset\nu$ is a partition of $n$.  Then there
  exists an integer $C$ such that in $S^\nu_\calZ$
  $$m_{\tnu_\eta}\LL =q^{C}\sum_{\S\in\SStd^\nu(\mu,\eta)}
       \prod_{r=1}^{z-1}\Big( [\S_r^{(r,z]}]^!
       \prod_{j=0}^{\gamma-\S_r^{(r,z]}-1}[c_z-c_r-j]\Big) m_\S.$$
\end{proposition}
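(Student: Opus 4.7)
The plan is to compute $m_{\tnu_\eta}\LL$ directly in the standard basis of $S^\nu_\calZ$, using the Murphy-style formulae for the action of the Jucys-Murphy elements promised in Section~3 (in particular the Lemma~\ref{Murphyformulae} invoked in the proof of Lemma~\ref{MurphyCommutes}). Two structural observations simplify the combinatorics. First, since $a=1$, the skew shape $\nu/\mu$ is a horizontal strip of $\gamma$ cells at the end of row~$z$, so $\SStd^\nu(\mu,\eta)$ corresponds bijectively to $\SStd(\mu,\eta)$: any $\S$ in the sum is determined by $\S_{\downarrow z}$, with the entries $z+1,\dots,z+\gamma$ placed in increasing order in the last $\gamma$ cells of row $z$. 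Second, $\tnu_\eta$ has $n+1,\dots,n+\gamma$ entered in row order in the cells of $\nu/\eta$, a configuration tame enough to make the subsequent computations tractable.

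My approach is induction on $\gamma$. In the base case $\gamma=1$, the single product $\prod_{i=1}^{z-1}(L_{n+1}-[c_i])$ acts on $m_{\tnu_\eta}$ as an approximate projection onto the eigencomponent of $L_{n+1}$ with eigenvalue $[c_z]$: each factor $(L_{n+1}-[c_i])$ kills the diagonal contribution where $n+1$ sits at a node of content $c_i$ and produces dominance corrections controlled by Murphy's formula. For the inductive step I would factor $\LL=\LL'\cdot\prod_{i=1}^{z-1}(L_{n+\gamma}-[c_i])$, use Lemma~\ref{MurphyCommutes} to move the trailing factor past the tableau-moves generated by $\LL'$, and invoke the inductive hypothesis for $\LL'$ acting on $m_{\tnu_\eta}$. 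Each of the $z-1$ factors then shifts the entry $n+\gamma$ one row downward (through a node of removable content $c_i$), contributing a factor of the form $[c_z-c_r-j]$, while the $q$-factorials $[\S_r^{(r,z]}]^!$ arise from the $q$-transpositions needed to reorder the entries in a row so as to obtain a semistandard tableau.

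The principal obstacle is proving the precise coefficient identity. For a given $\S\in\SStd^\nu(\mu,\eta)$, the coefficient of $m_\S$ in $m_{\tnu_\eta}\LL$ is a sum over all sequences of Jucys-Murphy moves that, after semistandardisation, yield $\S$; the content-differences and transposition factors generated by these sequences must telescope, via Gaussian-integer identities such as $[k+j]-[k]=q^k[j]$, to the stated product $\prod_{r=1}^{z-1}[\S_r^{(r,z]}]^!\prod_{j=0}^{\gamma-\S_r^{(r,z]}-1}[c_z-c_r-j]$. I expect this combinatorial verification to occupy the bulk of Section~3, and it is where the hypothesis that the pair is \emph{separated} really enters, since separation is what ensures that the intermediate tableaux produced by the Murphy moves remain standard and that no unexpected cancellations occur. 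The overall $q$-power $q^C$ absorbs the $q$-factors arising at each application of Murphy's formula and depends only on $\nu$ and $\eta$, not on $\S$; for our applications only the nonvanishing of the coefficients is used, so we do not need to determine $C$ explicitly.
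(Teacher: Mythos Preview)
Your high-level plan---apply the Jucys--Murphy factors one at a time and track the combinatorics---is indeed what the paper does, but your specific inductive scheme has two genuine gaps.

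First, the induction on $\gamma$ is ill-posed. Your $\LL'=\prod_{i=1}^{z-1}\prod_{j=1}^{\gamma-1}(L_{n+j}-[c_i])$ is \emph{not} the operator $\LL$ attached to the Carter--Payne pair with parameter $\gamma-1$: that pair lives in a different Specht module $S^{\nu'}$ (with $\nu'_1=\lambda_1+\gamma-1$) and has a different content $c_1$. So there is no statement of the same shape to which your inductive hypothesis applies. Relatedly, Lemma~\ref{MurphyCommutes} only asserts $T_k\LL=\LL T_k$ for $k\ne n$; it says nothing about commuting your trailing factor past the action of $\LL'$ and is not the tool you need here.

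Second, your order of peeling off factors is backward and breaks the intermediate lemmas. The paper proves the more general Proposition~\ref{Image} by induction on $k$ with $\nu,\gamma$ fixed, applying the factor for $L_{n+\gamma}$ \emph{first}, then $L_{n+\gamma-1}$, and so on. This order guarantees that when the factor for $L_{n+j}$ is applied, the larger entries $n+j+1,\dots,n+\gamma$ already sit in row~$z$, which is precisely the hypothesis of Lemma~\ref{onnj} and Corollary~\ref{CONMJ} (``$j$ maximal such that $\row_\t(n+j)<z$''). In your order the $L_{n+\gamma}$ factor is applied last, when $n+1,\dots,n+\gamma-1$ already occupy the last $\gamma-1$ cells of row~$z$, and none of the bumping lemmas apply to that configuration.

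Your remark that ``each factor shifts $n+\gamma$ one row downward'' is also too simple. A single factor $(L_{n+j}-[c_r])$ acts by the bumping formula of Lemma~\ref{onnj}; the product $\prod_{i=r}^{z-1}(L_{n+j}-[c_i])$ produces the \emph{weak bumping} sums of Lemma~\ref{ONMJ}, in which the bumped entries $d_1,\dots,d_\epsilon$ need not be monotone. The collapse of these sums to the clean product in Proposition~\ref{ImageDistinct} is exactly the content of the Gaussian identity Lemma~\ref{quantumsum} together with the Claim in the proof of Proposition~\ref{Image}; the separation hypothesis enters only there, to guarantee that every intermediate tableau remains semistandard.
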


\begin{proof}
    This is the special case of Proposition~\ref{Image} below, obtained
    by setting~$k=\gamma$ and~$y=1$. 
\end{proof}

\begin{Example}\label{example1}
Suppose that $\la=(4,4,2)$ and $\mu=(6,4)$. Then $\lambda$ and $\mu$
form a $(6,0)$-Carter-Payne pair with parameters
$(a,z,\gamma)=(1,3,2)$. Applying the definitions,
\[\LL=(L_{12}-[5])(L_{12}-[2])(L_{11}-[5])(L_{11}-[2]).\]
Identifying the tableau $\S$ with $m_{\S}$, direct computation (or 
Proposition~\ref{ImageDistinct}) shows that
{\small
\begin{align*}
{\tab(1234\eleven\twelve,5678,9\ten)} \;\LL = 
{\tab(111145,2222,33)} \;\LL & = q^{2}[2][2] \;{\tab(111122,2233,45)}-q^{-1}[2][3] \;{\tab(111123,2223,45)}\\
        &\hspace*{26.5mm}+q^{-5}[2][3][4] \;{\tab(111133,2222,45)}, \\
{\tab(12345\eleven,678\twelve,9\ten)} \;\LL = 
{\tab(111114,2225,33)} \;\LL &= -q^{-2}[2][6]\;{\tab(111112,2233,45)}+q^{-5}[3][6] \;{\tab(111113,2223,45)}, \\
{\tab(12345\eleven,6789,\ten\twelve)} \;\LL = 
{\tab(111114,2222,35)} \;\LL &= q^{-2}[3][6]\;{\tab(111112,2223,45)} -q^{-6} [3][4][6]\;{\tab(1111113,2222,45)}, \\
{\tab(123456,78\eleven\twelve,9\ten)} \;\LL = 
{\tab(111111,2245,33)}\;\LL  &= q^{-5}[2][6][7]\;{\tab(111111,2233,45)}, \\
{\tab(123456,789\eleven,\ten\twelve)} \;\LL = 
{\tab(111111,2224,35)} \;\LL &= -q^{-6}[3][6][7]\;{\tab(111111,2223,45)}, \\
{\tab(123456,789\ten,\eleven\twelve)} \;\LL = 
{\tab(111111,2222,45)} \;\LL &= q^{-6}[3][4][6][7]\;{\tab(111111,2222,45)}.
\end{align*}
}%
Using these calculations we invite the reader to check that right
multiplication by $\LL$ induces an $\h[10]^{\C}$-module homomorphism 
$S^\lambda\to S^\mu$ when $\zeta=\exp(2\pi i/6)\in\C$ (so that $e=6$). 
\end{Example}

Using Proposition \ref{ImageDistinct}, we can now give a second proof of
Theorem~\ref{CarterPayne} from the introduction for our pair $(\la,\mu)$. 
We treat the cases $p=0$ and $p>0$ separately because the 
proof when $p>0$ contains an additional subtlety.

\begin{theorem} \label{CPDistinctZero}
  Suppose that $p=0$ and that $\lambda$ and $\mu$ form a separated
  $(e,0)$-Carter-Payne pair with parameters $(a,z,\gamma)$. Then
  $$\EHom{n}(S^\lambda,S^\mu)\ne0.$$
\end{theorem}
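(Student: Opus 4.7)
The plan is to construct the required homomorphism via Corollary~\ref{EllersMurray}. I take $\theta$ to be the endomorphism of $S^\nu$ given by right multiplication by $\LL$; by Lemma~\ref{MurphyCommutes} this is a well-defined $\h$-module endomorphism. The task reduces to verifying $\theta(M_1) = 0$, $\im\theta \subseteq M_k$, and $\theta \not\equiv 0$. All three statements will be read off from Proposition~\ref{ImageDistinct} applied to the elements $m_{\tnu_\eta}$, as $\eta$ ranges over partitions of $n$ contained in $\nu$.

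The combinatorial core of the argument is the following observation. Because every $\S \in \SStd^\nu(\mu,\eta)$ is semistandard, all $1$'s of $\S$ must lie in row~$1$ and the entries $z{+}1,\dots,z{+}\gamma$ are confined to $\nu/\mu$, forcing $\S_1^{(1,z]} = \mu_1 - \eta_1$. Since $\eta \subseteq \nu$ with $|\eta| = n$, the equality $\eta = \lambda$ holds precisely when $\eta_1 = \lambda_1$, so every $\eta \neq \lambda$ satisfies $\eta_1 > \lambda_1$ and consequently $\S_1^{(1,z]} < \gamma$. Under this inequality the product $\prod_{j=0}^{\gamma - \S_1^{(1,z]} - 1}[c_z - c_1 - j]$ in Proposition~\ref{ImageDistinct} contains the $j = 0$ factor $[c_z - c_1]$. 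Taking $a = 1$ one computes $c_1 - c_z = h$, so the Carter-Payne hypothesis $h \equiv 0 \pmod e$ combined with $p = 0$ forces $[c_z - c_1] = -q^{-h}[h]_q = 0$ in $F$. Consequently $m_{\tnu_\eta}\LL = 0$ for every $\eta \neq \lambda$. On the other hand, for $\eta = \lambda$ we have $\S_1^{(1,z]} = \gamma$, so the offending product is empty, and the remaining factorial and Gaussian-integer factors can be shown to be nonzero in $F$ using $\gamma < e$ together with the separated hypothesis $\lambda_r - \lambda_{r+1} \geq \gamma$.

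Each $m_\S$ appearing in Proposition~\ref{ImageDistinct} lies in $M_k$, since $\Shape(\S_{\downarrow z}) = \mu$ expresses it as a combination of Murphy basis elements $m_\t$ with $\t_{\downarrow n} \in \Std(\mu)$; under the identification $M_k \cong S^\mu$, these project to the linearly independent semistandard elements $\{m_\T : \T \in \SStd(\mu,\lambda)\}$ of $S^\mu$. Invoking the cellular fact that $\{m_{\tnu_\eta} : \eta \subseteq \nu, |\eta|=n\}$ generates $S^\nu$ as an $\h$-module, with the sub-collection $\eta \neq \lambda$ generating $M_1$, yields $\im\theta \subseteq M_k$ and $\theta(M_1) = 0$. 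Corollary~\ref{EllersMurray} then delivers a nonzero element of $\EHom{n}(S^\lambda,S^\mu)$, as required. The main obstacles I anticipate are the rigorous justification of the cellular generation claims for $S^\nu$ and $M_1$, and the verification that the whole sum in $m_{\tnu_\lambda}\LL$ does not accidentally collapse to zero after pushing into $S^\mu$—this amounts to exhibiting a distinguished semistandard $\S$ whose contribution does not cancel against the others.
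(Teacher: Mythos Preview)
Your approach is essentially the paper's: use right multiplication by $\LL$ as the endomorphism $\theta$ of $S^\nu$, apply Proposition~\ref{ImageDistinct} to the almost initial tableaux $m_{\tnu_\eta}$, and read off that the $r=1$ factor contains $[c_z-c_1]_\zeta=0$ whenever $\eta\ne\lambda$. The combinatorial observation $\S_1^{(1,z]}=\mu_1-\eta_1<\gamma$ for $\eta\ne\lambda$ is correct and is exactly what drives the paper's argument.

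Three points need tightening. First, your ``cellular generation'' claim is not quite right: $\{m_{\tnu_\eta}\}$ does \emph{not} generate $S^\nu$ as an $\h=\h[n]$-module, because one also needs to permute $n{+}1,\dots,n{+}\gamma$. The paper instead uses Lemma~\ref{ArbPerm} to write an arbitrary $m_\t$ as $m_{\tnu_\eta}T_w$ with $w\in\Sym_n\times\Sym_\gamma$, and then Lemma~\ref{MurphyCommutes} to commute $T_w$ past $\LL$; this gives $m_\t\LL=(m_{\tnu_\eta}\LL)T_w$, from which both $\im\theta\subseteq M_k$ and $\theta(M_1)=0$ follow directly. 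Second, it is not true that every coefficient for $\eta=\lambda$ is nonzero (some $[c_z-c_r-j]_\zeta$ with $r>1$ can vanish); the paper instead exhibits the unique $\S\in\SStd^\nu(\mu,\lambda)$ with $\S_r^{(r,z]}=\gamma$ for all $1\le r<z$, whose coefficient is $\zeta^C([\gamma]_\zeta^!)^{z-1}\ne0$ since $\gamma<e$, and linear independence of the $m_\S$ prevents cancellation. Third, Corollary~\ref{EllersMurray} only yields $\Hom_{\h}(S^\lambda,S^\mu)\ne0$; the statement you are proving requires the stronger conclusion $\EHom{n}(S^\lambda,S^\mu)\ne0$, and the paper checks separately that $\pi_\lambda\theta_{\lambda\mu}$ is a linear combination of semistandard homomorphisms $\varphi_\S$.
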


\begin{proof}By Proposition~\ref{RowRemoval} it is enough to consider the
  case when $a=1$ and $\lambda_r=0$ when $r>z$. Since $\lambda$ and $\mu$
  form a Carter-Payne pair we have, by assumption, that $\gamma<e$ and
  $$\lambda_1-\lambda_z+z-1+\gamma= c_1-c_z\equiv0\pmod e.$$
  In particular, $[c_z-c_1]_{\zeta}=0$ in $F$.
  
  Suppose that $\t\in\Std(\nu)$ and let $\eta=\Shape(\t_{\downarrow n})$. Then 
  in $S^\nu$ we have $m_\t=m_{\tnu_\eta}T_w$, for some
  $w\in\Sym_n\times\Sym_\gamma$. Therefore, by specializing $q=\zeta$
  in Proposition~\ref{ImageDistinct} and using Lemma~\ref{MurphyCommutes}, we
  have 
  $$m_\t \LL = \zeta^{C}\sum_{\S\in\SStd^\nu(\mu,\eta)} \prod_{r=1}^{z-1}
         \Big(\,[\S_r^{(r,z]}]_\zeta^!\prod_{j=0}^{\gamma-\S_r^{(r,z]}-1}[c_z-c_r-j]_\zeta
         \Big)\, m_\S T_w,$$ 
  for some $C\in\Z$. Recall that as an $\h$-module $S^\nu$
  has a Specht filtration 
  $S^\nu=M_0\supset M_1\supset\dots\supset M_k\supset0$ with
  $S^\lambda\cong M_0/M_1$ and $S^\mu\cong M_k$ by
  Lemma~\ref{restriction}. Moreover, $M_k$ is spanned by the $m_\s$,
  for $\s\in\Std(\nu)$ with $\Shape(\s_{\downarrow n})=\mu$.
  Therefore, the last displayed equation shows that $m_\t\LL\in M_k$
  for $\t\in\Std(\nu)$. 

  Next suppose that $\t\in \Std_\eta(\nu)$ and $m_\t\in M_1$. Then
  $\eta\ne\lambda$ by Lemma~\ref{restriction}. Consequently, if
  $\S\in\SStd^\nu(\mu,\eta)$ then $\S_1^{(1,z]}<\gamma$ and
  $[c_z-c_1]_\zeta$ divides the coefficient of $m_\S$ in
  $m_\t\LL$. That is, $m_\t\LL=0$ since $[c_z-c_1]_\zeta=0$
  in~$F$.
 
  By the last two paragraphs, and Lemma~\ref{EllersMurray}, right
  multiplication by $\LL$ induces an $\h$-module homomorphism
  from $S^\lambda$ to $S^\mu$.  Suppose that $\t=\tnu_\lambda$. Then
  there exists a semistandard tableau $\S\in\SStd^\nu(\mu,\lambda)$
  with $\S_r^{(r,z]}=\gamma$, for $1\le r<z$. This is the unique semistandard
  tableau $\S\in\SStd^\nu(\mu,\lambda)$ such that row $r$ contains
  $\gamma$ entries equal to $r+1$, for $1\le r <z$. The coefficient
  of $m_\S$ in $m_{\tnu_\lambda}\LL$ is $\zeta^{C}\([\gamma]_\zeta\)^{z-1}\ne0$,
  so that $m_{\tnu_\lambda}\LL\ne0$ as required.

  We have now shown that right multiplication on $S^\nu$ by $\LL$ induces a 
  non-zero map $\theta_{\lambda\mu}\map{S^\lambda}S^\mu$. It remains to show
  that $\theta_{\lambda\mu}\in\EHom{n}(S^\lambda,S^\mu)$. However, from what we
  have proved it follows that 
  $$\pi_\la \theta_{\lambda\mu} = \zeta^{C}\sum_{\S\in\SStd(\mu,\lambda)} 
      \prod_{r=1}^{z-1} \Big( [\S_r^{(r,z]}]_\zeta^!
      \prod_{j=0}^{\gamma-\S_r^{(r,z]}-1}[c_z-c_r-j]_\zeta\Big)
      \, \varphi_\S.$$ 
  Therefore, $\theta_{\lambda\mu}\in\EHom{n}(S^\lambda,S^\mu)$ as
  claimed.
\end{proof}

We now consider the case when $p>0$. The argument is essentially the
same as in the case when $p=0$. There is a technical difficulty,
however, because in general multiplication by $\LL$ induces the zero
homomorphism from $S^\la$ to $S^\mu$.

\begin{theorem} \label{CPDistinctPrime}
  Suppose that $p>0$ and that $\lambda$ and $\mu$ form a separated $(e,p)$-Carter-Payne pair
  with parameters $(a,z,\gamma)$. Then
  $$\EHom{n}(S^\lambda,S^\mu)\ne0$$
\end{theorem}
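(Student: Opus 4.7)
The proof would follow the template of Theorem~\ref{CPDistinctZero}, with one additional rescaling step to avoid producing the zero map upon specialization. As before, by Proposition~\ref{RowRemoval} I would reduce to the case $a=1$ and $\lambda_r=0$ for $r>z$, form $\nu$ as in Section~\ref{ProofSection}, and consider right multiplication by $\LL$ on $S^\nu_\calZ$. Proposition~\ref{ImageDistinct} together with Lemma~\ref{MurphyCommutes} provides an explicit formula for $m_\t\LL$ on every basis vector, as a $\calZ$-linear combination of elements of $M_k$ with coefficients that are products of Gaussian integers indexed by $\eta=\Shape(\t_{\downarrow n})$ and $\S\in\SStd^\nu(\mu,\eta)$.

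The essential new difficulty is that the coefficient of the canonical image for $\eta=\lambda$ is, up to a power of $\q$, $[\gamma]_\q!^{z-1}$, which vanishes at $\q=\zeta$ as soon as $\gamma\ge e$ and so cannot directly witness a nonzero map. To remedy this I would extract a common polynomial factor $P(\q)\in\calZ$ which divides the coefficient of every $m_\S$ in every $m_\t\LL$, and set $\phi'(m_\t)=P(\q)^{-1}(m_\t\LL)$ on $S^\nu_\calZ$; the existence of such a $P(\q)$ is the content of Lemma~\ref{divides1}. Since $P(\q)$ is central and $\calZ$ is an integral domain, one deduces $\h[n+\gamma]^\calZ$-linearity of $\phi'$ from $P(\q)\phi'=\LL\cdot(-)$ by a torsion-freeness argument, and $\phi'$ then specializes at $\q=\zeta$ to an $\h$-endomorphism $\theta$ of $S^\nu_F$.

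It remains to check the hypotheses of Corollary~\ref{EllersMurray}. The containment $\im\theta\subseteq M_k$ is immediate from Proposition~\ref{ImageDistinct}. For $m_\t\in M_1$ the shape $\eta=\Shape(\t_{\downarrow n})$ satisfies $\eta_1>\lambda_1$, which forces $\S_1^{(1,z]}<\gamma$ for every $\S\in\SStd^\nu(\mu,\eta)$ and hence introduces at least one extra factor $[c_z-c_1-j]_\q$ into the coefficient. The strengthened Carter-Payne divisibility $h\equiv 0\pmod{e p^{\ell_p(\gamma^*)}}$ — rather than merely $h\equiv 0\pmod e$ — is precisely what guarantees that these additional factors, after dividing by $P(\q)$ and specializing at $\q=\zeta$, still contribute enough $p$-power to vanish in $F$. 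Conversely, $P(\q)$ is chosen minimally so that $P(\q)^{-1}[\gamma]_\q!^{z-1}$ specializes to a unit in $F$, giving $\theta(m_{\tnu_\lambda})\ne 0$.

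The main obstacle is the arithmetic balancing act in constructing $P(\q)$: it must absorb exactly the vanishing at $\eta=\lambda$ while leaving intact the additional vanishings coming from the $[c_z-c_1-j]_\q$ factors when $\eta\ne\lambda$. This is where Lemma~\ref{divides1} does the real work, translating the modular hypothesis on $h$ into explicit divisibility statements among products of Gaussian integers with controlled $e$- and $p$-adic valuations. Granted this, $\theta$ descends via Corollary~\ref{EllersMurray} to a nonzero homomorphism $\theta_{\lambda\mu}\in\Hom_{\h}(S^\lambda,S^\mu)$; dividing the formula of Proposition~\ref{ImageDistinct} through by $P(\q)$ exhibits $\pi_\lambda\theta_{\lambda\mu}$ as an $F$-linear combination of the maps $\varphi_\S$, so $\theta_{\lambda\mu}\in\EHom{n}(S^\lambda,S^\mu)$ as required.
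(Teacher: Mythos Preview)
Your overall strategy matches the paper's, but the nonvanishing step contains a genuine gap. You claim that $P(\q)$ can be chosen so that $P(\q)^{-1}[\gamma]_\q!^{\,z-1}$ specializes to a unit in~$F$; equivalently, that the ``canonical'' tableau with $\S_r^{(r,z]}=\gamma$ for every $r$ survives after rescaling. But Lemma~\ref{divides1} is applied row by row: for each $1\le r<z$ it produces an integer $\beta_r\in[0,\gamma]$, depending on $c_z-c_r$, at which the factor $[\delta]^!\prod_{j=0}^{\gamma-\delta-1}[c_z-c_r-j]$ is minimal. The Carter-Payne congruence $c_1-c_z\equiv 0\pmod{ep^{\ell_p(\gamma^*)}}$ forces $\beta_1=\gamma$, but says nothing about $\beta_r$ for $r>1$. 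After dividing by the product of these row-wise minimal factors, the coefficient of your canonical tableau becomes $\prod_r f_{r,\gamma}(\zeta)$ (up to units), and $f_{r,\gamma}(\zeta)$ can vanish whenever $\beta_r\ne\gamma$. So this tableau does not, in general, witness nonvanishing.

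The paper repairs this by instead exhibiting a tableau $\S\in\SStd^\nu(\mu,\lambda)$ with $\S_r^{(r,z]}=\beta_r$ for every $r$; by construction its coefficient specializes to a nonzero element of~$F$. The existence of such a tableau is not automatic: one must build a semistandard $\mu$-tableau of type $\lambda$ with prescribed row-wise excess $\beta_r$, and it is exactly here that the separated hypothesis $\lambda_r-\lambda_{r+1}\ge\gamma$ is invoked, to guarantee enough room in each row for the inductive construction. Your proposal does not use the separated hypothesis anywhere in the nonvanishing argument, which is a sign that something is missing.

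A minor point: the paper's rescaling factor $\beta_{\lambda\mu}(\q)$ is not literally a common polynomial divisor of all coefficients, but the product of the row-wise minimal factors multiplied by the reciprocals of the auxiliary polynomials $g_{r,\delta}(\q)$ from Lemma~\ref{divides1} (which are units at $\zeta$). Your description of $P(\q)$ as a gcd is morally in the right direction, but the precise mechanism is the $f_{r,\delta}/g_{r,\delta}$ output of Lemma~\ref{divides1}.
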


\begin{proof}As in Theorem~\ref{CPDistinctZero}, we may assume that $a=1$ and $\la_r=0$ 
    for $r>z$. We first consider the Specht module $S^\nu_\calZ$ for the
    generic Hecke algebra $\h[n+\gamma]^\calZ$ defined over
    $\calZ=\Z[\q,\q^{-1}]$. Suppose that $\t\in\Std(\nu)$ and set
    $\eta=\Shape(\t_{\downarrow n})$ so that $m_\t = m_{\tnu_\eta}T_w$
    for some $w \in \sym[n]\times \Sym_{\gamma}$.  By Lemma~\ref{MurphyCommutes} and
    Proposition~\ref{ImageDistinct} in $S^\nu_\calZ$ we have
    $$m_\t \LL = \sum_{\S\in\SStd^\nu(\mu,\eta)} \q^{C} \prod_{r=1}^{z-1}
         \Big( [\S_r^{(r,z]}]_\q^!\prod_{j=0}^{\gamma-\S_r^{(r,z]}-1}[c_z-c_r-j]_\q\Big) 
             m_\S T_w.$$ 
    If $\gamma<e$ then, as in the proof of Theorem~\ref{CPDistinctZero}, there exists a tableau $\S$ with coefficient $\zeta^C [\gamma]_\zeta^{z-1} \neq 0$ when we specialize at $\q=\zeta$. Therefore, in this case we
    set $\q=\zeta$ and argue exactly as in the proof of
    Theorem~\ref{CPDistinctZero} to show that multiplication by $\LL$
    induces a non-zero homomorphism in
    $\EHom{n}(S\lambda,S^\mu)\ne0$. If $\gamma\ge e$ then we have to
    work harder because the coefficients on the right hand side are
    almost always zero when we specialize to $\h[n+\gamma]$.  

  Suppose $1\le r <z$.  By Lemma~\ref{divides1} below, there exists an
  integer $\beta_r$ with $0\le \beta_r\le \gamma$ such that for all
  integers $\delta$ with $0\le\delta\le\gamma$ there exist polynomials
  $f_{r,\delta}(\q)$ and $g_{r,\delta}(\q)$ in~$\calZ$, which depend only on
  $c_z-c_r$, such that $g_{r,\delta}(\zeta)\ne0$ and
  $$ \frac{[\delta]^!\prod_{j=0}^{\gamma-\delta-1}[c_z-c_r-j]_\q}
      {[\beta_r]^!\prod_{j=0}^{\gamma-\beta_r-1}[c_z-c_r-j]_\q} 
           = \frac{f_{r,\delta}(\q)}{g_{r,\delta}(\q)}.$$
   Hence, there is a well-defined $\h^\calZ$-module homomorphism 
   $\theta^\calZ_{\la\mu} \in \End_{\h}(S_\calZ^\nu)$ given by
   $\theta^\calZ_{\lambda\mu}(h)=\frac1{\beta_{\lambda\mu}}h\LL$, for all $h\in S_\calZ^\nu$,
   where
   $$\beta_{\lambda\mu}=\beta_{\lambda\mu}(\q)=\prod_{r=1}^{z-1}\Bigg([\beta_r]_\q^!
          \prod_{j=0}^{\gamma-\beta_r-1}[c_z-c_r-j]_\q\Bigg)\cdot
          \prod_{\delta=0}^{\gamma}\prod_{r=1}^{z-1}\frac1{g_{r,\delta}(\q)}.$$
   Since $\lambda$ and $\mu$ form an $(e,p)$-Carter-Payne pair, we have
   $c_z-c_1\equiv0\pmod{ep^{\ell_p(\gamma^\ast)}}$, where
   $\gamma^*=\lfloor\frac\gamma e\rfloor$. Consequently, by
   Lemma~\ref{divides1}, $\beta_1 =\gamma$ and $f_{1,\delta}(\zeta)\ne 0$ if
   and only if $\delta=\beta_1$. Therefore, arguing as in the proof of
   Theorem~\ref{CPDistinctZero}, we see that specializing at $\q=\zeta$ gives a
   $\h$-module homomorphism $\theta_{\lambda\mu}\map{S^\lambda}S^\mu$ such that
   $$\pi_\la \theta_{\lambda\mu}= \sum_{\S\in\SStd^\nu(\mu,\la)} \Bigg(\zeta^{C} 
                    \prod_{r=1}^{z-1}f_{r,\S_r^{(r,z]}}(\zeta) 
		    \prod_{\delta=0}^\gamma g_{r,\delta}(\zeta)\Bigg)
		    \,\varphi_\S.$$ 

  Finally, to show that $\theta_{\lambda\mu}$ is non-zero we show that there
  exists a tableau $\S\in\SStd^\nu(\mu,\la)$ such that $\S_r^{(r,z]} =
  \beta_r$, for $1\le r<z$. This is enough because for such a tableau
  $\S$ the paragraphs above show that~$m_\S$ appears in
  $\theta_{\lambda\mu}(m_{\tnu_\la})$ with coefficient
  $\prod_{r=1}^{z-1}\prod_{\delta=0}^\gamma g_{r,\delta}(\zeta)$,
  and this is non-zero by construction. 

  In general, there are many tableaux $\S\in\SStd^\nu(\mu,\la)$ with
  $\S_r^{(r,z]} = \beta_r$, for $1\le r<z$. To construct a family of
  tableaux with this property set
  $\beta_z=\gamma$.  For $1 \leq r \leq z$ we construct a partition
  $\nu^{(r)}$ and a semistandard  $\nu^{(r)}$-tableaux~$\S^{(r)}$ of
  type $(\lambda_1,\dots,\lambda_r)$ with the properties that 
  $(\S^{(r)})_k^k=\nu_k-\beta_k$, for $1\le k\le r$, and
  $$\nu^{(r)}_1+\dots+\nu^{(r)}_r=\nu_1+\dots+\nu_r-\gamma.\leqno(\dag)$$
  To start, let $\S^{(1)}$ be the unique semistandard
  $(\lambda_1)$-tableau of type $(\lambda_1)$. By induction we may assume
  that we have constructed a semistandard $\nu^{(r)}$-tableau $\S^{(r)}$ as 
  above. Now define $\S^{(r+1)}$ to be any $\nu^{(r+1)}$-tableau of type
  $(\la_1,\dots,\la_{r+1})$ which is obtained by adding $\lambda_{r+1}$
  entries labeled $r+1$ to $\S^{(r)}$ in such a way that
  $\nu^{(r+1)}\subset\nu$ and $\nu^{(r+1)}_{r+1}=\nu_{r+1}-\beta_{r+1}$.
  Such tableaux exist because of $(\dag)$ since
  $\beta_{r+1}\le\gamma\le\nu_{r+1}$.  The tableau $\S^{(r+1)}$ is
  semistandard because $\lambda_i-\lambda_{i+1}\ge\gamma$, for $1\le i\le
  r$. It is easy to check that $\S^{(r+1)}$ satisfies all of the properties
  that we assumed of $\S^{(r)}$, so proceeding in this way we can construct a 
  semistandard $\nu^{(z)}$-tableaux of type
  $\lambda=(\lambda_1,\dots,\lambda_z)$. In fact,
  $\nu^{(z)}=\mu$ by $(\dag)$ because, by construction,
  $(\S^{(z)})_z^z=\nu_z-\beta_z=\mu_z$ since $\beta_z=\gamma$.
  Therefore, if we define $\S=\S^{(z+1)}$ to be the tableau obtained by 
  adding entries labeled $z+1,\dots,z+\gamma$ in row order to
  row $z$ of~$\S^{(z)}$ then $\S\in\SStd^\nu(\mu,\la)$ and 
  $\S_r^{(r,z]}=\beta_r$, for $1\le r<z$. Consequently, the coefficient of 
  $m_\S$ in $\theta_{\lambda\mu}(m_{\tnu_\la})$ is non-zero, so
  $\theta_{\la\mu}\ne0$ as claimed.
\end{proof}

If $p>0$ let $\beta_{\la\mu}(q)\in\Z[q]$ be the polynomial defined during
the proof of Theorem~\ref{CPDistinctZero} and if $p=0$ set
$\beta_{\la\mu}(q)=1$. Then the Carter-Payne homomorphisms
$\theta_{\la\mu}\map{S^\la}S^\mu$ that we constructed in the proofs of
Theorem~\ref{CPDistinctZero} and Theorem~\ref{CPDistinctPrime} are both of
the form
\begin{equation}\label{CP hom}
\theta_{\la\mu}(m_\t) =\frac1{\beta_{\la\mu}(\zeta)} m_\t\LL, 
\end{equation}
for $\t\in\Std_\la(\nu)$ (and this expression makes sense).

\begin{Example}
  As in Example~\ref{example1}, suppose that $\lambda=(4,4,2)$ and
  $\mu=(6,4)$. Then~$\lambda$ and~$\mu$ form an $(e,p)$-Carter-Payne pair
  with $e=2$ and $p=3$. Dividing all of the equations in
  Example~\ref{example1} by $[2]=1+q$ we obtain a map
  $\theta_{\lambda\mu}\map{S^{(4,4,2)}}S^{(6,4)}$. In fact, the calculations
  in Example~\ref{example1} show that $\pi_\la \theta_{\lambda\mu}=\varphi_\S$ where
\[\S=\tab(111123,2223).\]
However, applying Lemmas 5 and 7 from \cite[\Sect2]{FayersMartin:homs} it is
possible to show that if $e=p=2$ then 
$$\dim\text{Hom}_{\h[10]}(S^{(4,4,2)},S^{(6,4)})=1.$$ 
The existence of such a map is not predicted by the Carter-Payne
theorem. Moreover, looking at Example~\ref{example1} shows that this
map is not induced by right multiplication by any multiple of~$\LL$
because in order to make this map non-zero we need to divide by
$[2]_\zeta$ but then 
$${\tab(111114,2225,33)} \;\frac{1}{[2]_\zeta}\LL = {\tab(111113,2223,45)} \ne0,$$
when we set $\zeta=-1$. Consequently, right multiplication by
$\LL/[2]_\zeta$
does not induce a homomorphism from $S^\lambda$ to $S^\mu$ when
$e=p=2$ because, using the notation of Lemma~\ref{restriction}, the
submodule~$M_1$ of~$S^\nu$ is not killed by $\LL$.
\end{Example}

\subsection{Composing Homomorphisms} \label{CompositionSection}
This section shows that we can compose certain Carter-Payne
homomorphisms. This gives a positive answer to a
question of Henning Andersen.

Recall that Theorems~\ref{CPDistinctZero} and~\ref{CPDistinctPrime}
construct a non-zero homomorphism $\theta_{\la\sigma}\map{S^\la}S^\sigma$
whenever $\la$ and $\sigma$ form a separated Carter-Payne pair with parameters
$(a,y,\gamma)$. Let $\mu$ be another partition of $n$ and suppose
that $a<y<z$. Then it is easy to see that $\la$ and $\mu$ form a
separated Carter-Payne pair with parameters $(a,z,\gamma)$ if and only if
$\sigma$ and $\mu$ form a separated Carter-Payne pair with parameters
$(y,z,\gamma)$. Thus we have two homomorphisms $\theta_{\la\mu}$ and
$\theta_{\la\sigma}\theta_{\sigma\mu}$, which may be the zero map, from
$S^\la$ to $S^\mu$.

\begin{theorem} \label{composition}
Suppose that $\lambda$, $\mu$ and $\sigma$ are
partitions of $n$ such that $\lambda$ and $\sigma$ form a separated
$(e,p)$-Carter-Payne pair with parameters $(a,y,\gamma)$ and that
$\sigma$ and $\mu$ form a separated $(e,p)$-Carter-Payne pair with parameters
$(y,z,\gamma)$, where $a<y<z$ and $\gamma>0$.  Then 
$\theta_{\la\mu} = \theta_{\la\sigma}\theta_{\sigma\mu}$.
\end{theorem}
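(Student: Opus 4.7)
The plan is to work in a common ambient Specht module and to exploit a telescoping identity between Jucys-Murphy polynomials. First I would observe that the Ellers-Murray ambient partition coincides for all three Carter-Payne pairs: if $\nu$ is the partition of $n+\gamma$ obtained from $\la$ by adding $\gamma$ cells to row~$a$, then $\nu$ is equally obtained from $\sigma$ by adding $\gamma$ cells to row~$y$ and from $\mu$ by adding $\gamma$ cells to row~$z$. Hence $S^\la$, $S^\sigma$, and $S^\mu$ all appear as subquotients of the single Specht module $S^\nu$, and each of the three Carter-Payne homomorphisms is induced by right multiplication on $S^\nu$ by a polynomial in the Jucys-Murphy elements $L_{n+1},\dots,L_{n+\gamma}$. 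With $c_i=\nu_i-i$ the relevant polynomials are
$$\LLls=\prod_{i=a}^{y-1}\prod_{j=1}^{\gamma}(L_{n+j}-[c_i]),\quad\LLsm=\prod_{i=y}^{z-1}\prod_{j=1}^{\gamma}(L_{n+j}-[c_i]),\quad\LL=\prod_{i=a}^{z-1}\prod_{j=1}^{\gamma}(L_{n+j}-[c_i]),$$
and because the $L_{n+j}$'s generate a commutative subalgebra the telescoping identity $\LLls\cdot\LLsm=\LL$ is then immediate.

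The second ingredient is the corresponding scalar identity $\beta_{\la\sigma}(\zeta)\beta_{\sigma\mu}(\zeta)=\beta_{\la\mu}(\zeta)$, which is a direct (if detailed) computation from the explicit product formula for $\beta_{\la\mu}(q)$ appearing in the proof of Theorem~\ref{CPDistinctPrime}; when $p=0$ all three $\beta$'s are $1$ and this is trivial. Granted these two identities, the conclusion follows by evaluating on the lift $m_{\tnu_\la}\in S^\nu$ of the cyclic generator of $S^\la$ and applying~\eqref{CP hom} twice:
$$\beta_{\la\sigma}(\zeta)\beta_{\sigma\mu}(\zeta)\,\theta_{\la\sigma}\theta_{\sigma\mu}(m_{\t^\la})=m_{\tnu_\la}\LLls\LLsm=m_{\tnu_\la}\LL=\beta_{\la\mu}(\zeta)\,\theta_{\la\mu}(m_{\t^\la})$$
inside the submodule of $S^\nu$ isomorphic to $S^\mu$; dividing by the common scalar yields $\theta_{\la\sigma}\theta_{\sigma\mu}=\theta_{\la\mu}$.

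The main obstacle is justifying the first equality in this chain: we must show that right multiplication by $\LLls$ on $S^\nu$ genuinely realizes $\theta_{\la\sigma}$, even though $S^\sigma$ appears only as an \emph{intermediate} subquotient of $S^\nu$ (rather than as the bottom of its Specht filtration, which is where the construction in Theorems~\ref{CPDistinctZero} and~\ref{CPDistinctPrime} naturally places it via the reduced ambient coming from Proposition~\ref{RowRemoval}). Making this rigorous requires a variant of Proposition~\ref{ImageDistinct} for $\LLls$ which expresses $m_{\tnu_\la}\LLls$ as an explicit sum of semistandard basis elements $m_\S$ whose components in the relevant subquotient $\cong S^\sigma$ assemble into $\beta_{\la\sigma}(\zeta)\theta_{\la\sigma}(m_{\t^\la})$; iterating the same analysis for $\LLsm$ acting on each $m_\S$ then gives the required identity $m_{\tnu_\la}\LLls\LLsm=\beta_{\la\sigma}(\zeta)\beta_{\sigma\mu}(\zeta)\,\theta_{\la\sigma}\theta_{\sigma\mu}(m_{\t^\la})$, completing the argument.
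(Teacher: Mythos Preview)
Your overall strategy matches the paper's: work in the common ambient $S^\nu$, use the factorisation $\LL=\LLls\LLsm$, and match up the $\beta$-normalisations. But two of your steps are real gaps, not just details to be filled in.

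First, the identity $\beta_{\la\sigma}(\zeta)\beta_{\sigma\mu}(\zeta)=\beta_{\la\mu}(\zeta)$ cannot be a ``direct computation from the explicit product formula'': the polynomials $\beta_{\la\mu}(q)$ are not uniquely determined by Lemma~\ref{divides1} (both the integers $\beta_r$ and the auxiliary polynomials $g_{r,\delta}$ there involve choices), and for generic choices one only obtains $\theta_{\la\mu}=u\,\theta_{\la\sigma}\theta_{\sigma\mu}$ for some non-zero $u\in F$. The paper's Remark after the proof says exactly this. Note also that the row-$r$ factor of $\beta_{\la\sigma}$ is built from $c_y-c_r$ while the corresponding factor of $\beta_{\la\mu}$ uses $c_z-c_r$, so the two products do not visibly line up. What the paper actually establishes is that the product $\beta_{\la\sigma}(q)\beta_{\sigma\mu}(q)$ satisfies the divisibility property required of $\beta_{\la\mu}(q)$ (it divides every coefficient appearing in Proposition~\ref{ImageDistinct}), so one may \emph{choose} $\beta_{\la\mu}(q)=\beta_{\la\sigma}(q)\beta_{\sigma\mu}(q)$; that divisibility is deduced from the coefficient factorisation in Claims~1--3, not from a closed-form identity.

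Second, you correctly flag that $S^\sigma$ is only a subquotient $M_l/M_{l+1}$ of $S^\nu$, but your proposed remedy does not close the gap. The element $m_{\tnu_\la}\LLls$ lies in $M_l$ but not in general in any lift of $\beta_{\la\sigma}(\zeta)\theta_{\la\sigma}(m_{\t^\la})$: there is an unavoidable error in $M_{l+1}$, and ``iterating the same analysis for $\LLsm$'' does not explain why this error is harmless after multiplying by $\LLsm$. The paper supplies the missing ingredient as its Claim~3: computing $m_{\tnu_\eta}\LL$ directly via Proposition~\ref{ImageDistinct}, one sees that every $m_\S$ with $\S\in\SStd^\nu(\mu,\eta)\setminus\SStd^\nu(\mu,\sigma,\eta)$ (precisely the terms not factoring through $\sigma$) has coefficient divisible by $[c_z-c_y]$; since $(\sigma,\mu)$ is a Carter--Payne pair, $c_z-c_y\equiv 0\pmod{ep^{\ell_p(\gamma^*)}}$, and these contributions vanish after dividing by $\beta_{\la\sigma}(q)\beta_{\sigma\mu}(q)$ and specialising at $q=\zeta$. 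Without this control on the error terms your displayed chain of equalities cannot be justified.
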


\begin{proof}
Using Proposition~\ref{RowRemoval}, we may assume that $a=1$
    and $z=\max\set{i>0|\la_i\ne0}$.  Let $\nu$ be the
    partition of $n+\gamma$ given by
    $$\nu_i=\begin{cases}\lambda_i+\gamma,&\text{if }i=1,\\
                               \lambda_i,&\text{otherwise}.
    \end{cases}$$
    Then $\lambda,\mu,\sigma\subset\nu$. 

    To prove the Theorem we consider the Specht module $S^\nu_\calZ$ for the 
    generic Iwahori-Hecke algebra~$\h[n+\gamma]^\calZ$.
    As in Lemma~\ref{restriction}, we fix a Specht filtration
    $$S_\calZ^\nu=M_0\supset M_1\supset\dots\supset M_l\supset
                        \dots\supset M_k\supset0$$
    of $S_\calZ^\nu$ such that, as $\h^\calZ$-modules, 
    $S_\calZ^\lambda\cong M_0/M_1$,
    $S_\calZ^\mu \cong M_k$ and $S_\calZ^\sigma\cong M_l/M_{l+1}$ for some 
    $1 \leq l <k$. We may assume that 
    $\set{m_\t|\sigma \not \gedom \Shape(\t_{\downarrow n})}$ is a basis of $M_{l+1}$.  
    For $1\le i\le z$, set  $c_i = \nu_i - i$. Mirroring the
    definition of $\LL$ (see before Lemma~\ref{MurphyCommutes}), set
    $$\LLls=\prod_{i=1}^{y-1}\prod_{j=1}^\gamma(L_{n+j}-[c_i]_q)
    \qquad\text{and}\qquad
    \LLsm=\prod_{i=y}^{z-1}\prod_{j=1}^\gamma(L_{n+j}-[c_i]_q).$$
    Then $\LL=\LLls\LLsm$. By (\ref{CP hom}) there exist 
    polynomials $\beta_{\la\mu}(q), \beta_{\sigma\mu}(q)\in\Z[q]$ such that 
    $$\theta_{\la\mu}(m_\t)=\frac1{\beta_{\la\mu}(\zeta)}m_\t\LL,$$
    for $\t\in\Std_\la(\nu)$. Via Proposition~\ref{RowRemoval}, we have
    analogous descriptions of the maps $\theta_{\la\sigma}$ 
    and~$\theta_{\sigma\mu}$, however, we do not (yet) have a
    description of these maps as $\h$-module endomorphisms of $S^\nu$.
    The next three claims allow us to describe these maps as
    endomorphisms of $S^\nu$ and to connect them with $\theta_{\la\mu}$.

    \begin{claim}[1]
      Suppose that $\eta$ is a partition of $n$ such that $\eta\subset\nu$
      and $\eta\gedom\sigma$. Then, in $S^\nu_\calZ$, 
      $$m_{\tnu_\eta} \LLsm=\q^{C_1} \sum_{\S\in\SStd^\nu(\mu,\eta)} 
        \prod_{r=y}^{z-1}\Big( [\S_r^{(r,z]}]_\q^!
        \prod_{j=0}^{\gamma-\S_r^{(r,z]}-1}[c_z-c_r-j]_\q\Big)\, m_\S.$$ 
    for some $C_1\in\Z$. Moreover, if $\S\in\SStd^\nu(\mu,\eta)$ then
    $\S^r_r=\mu_r$,  for $1\le r\le y$.
    \end{claim}

    \begin{proof}[Proof of Claim 1] When $y=1$ this is precisely
      Proposition~\ref{ImageDistinct}. We are assuming, however, that $y>1$. 
      In this case, the formula for $m_{\tnu_\eta}\LLls$ follows by setting 
      $k=\gamma$ in Proposition~\ref{Image} below (which includes 
      Proposition~\ref{ImageDistinct} as a special case). Secondly,
      observe that $\row_{\tnu_\eta}(n+j)\ge y$, for $1\le j\le\gamma$, 
      because $\eta\gedom\sigma$. Consequently, if~$\S\in\SStd^\nu(\mu,\eta)$ 
      then $\eta_r=\S^r_r=\mu_r$,  for $1\le r\le y$.
    \end{proof}

    \begin{claim}[2]
      Suppose that $\eta$ is a partition of $n$ such that $\eta\subset\nu$.
      Then, in $S^\nu_\calZ/M_{l+1}$, 
      $$m_{\tnu_\eta} \LLls \equiv \q^{C_2} \sum_{\S\in\SStd^\nu(\sigma,\eta)}
        \prod_{r=1}^{y-1}\Big( [\S_r^{(r,y]}]_\q^!
        \prod_{j=0}^{\gamma-\S_r^{(r,y]}-1}
                      [c_y-c_r-j]_\q\Big)\, m_\S \pmod{M_{l+1}}.$$ 
    for some $C_2\in\Z$. Moreover, if $\S\in\SStd^\nu(\sigma,\eta)$ then
    $\S^r_r=\sigma_r$,  for $y\le r\le z$.
    \end{claim}

    \begin{proof}[Proof of Claim 2] First observe that, by
      Lemma~\ref{onnj} below, $m_{\tnu_\eta}\LLls$ is a linear combination
      of terms $m_\s$ where $\s_{\downarrow n}\gedom\t^\eta$. If
      $\row_\s(n+j)>y$ for some $j$ with $1\le j\le\gamma$ then $m_\s\in
      M_{l+1}$, so we may assume that $\row_\s(n+j)\le y$ for $1\le
      j\le\gamma$. Consequently, if $m_\S+S_{l+1}$ appears with
      non-zero coefficient in $m_{\tnu_\eta}\LLls$ for 
      some~$\S\in\SStd^\nu(\mu,\eta)$ 
      then $\eta_r=\S^r_r=\sigma_r$,  for $y\le r\le z$.
      Therefore, we may replace $\sigma$ with $(\sigma_1,\dots,\sigma_y)$
      and deduce the claim from Proposition~\ref{ImageDistinct}. Note that
      if~$\S\in\SStd^\nu(\sigma,\eta)$ and $1\le r<y$ then
      $\S^{(r,y]}_r=\S^{(r,z)}_r$ since $\S^a_a=\sigma_a$ when
      $y\le a\le z$.
    \end{proof}

    \begin{claim}[3]
      Suppose that $\eta$ is a partition of $n$ such that $\eta\subset\nu$. 
      Then
      $$m_{\tnu_\eta}\LL\equiv q^C\sum_{\S\in\SStd^\nu(\mu,\sigma,\eta)} 
             \prod_{r=1}^{z-1}\Big( [\S_r^{(r,z]}]_\q^!
             \prod_{j=0}^{\gamma-\S_r^{(r,z]}-1} [c_z-c_r-j]_\q\Big)\, m_\S
                      \pmod{[c_y-c_z]S^\nu_{\calZ}}.$$
      where $C\in\Z$ and
      $\SStd^\nu(\mu,\sigma,\eta)
      =\set{\S\in\SStd^\nu(\mu,\eta)|\S^{>y}_r=0\text{ for }1\le r<y}$.
    \end{claim}

    \begin{proof}[Proof of Claim 3]
       Proposition~\ref{ImageDistinct} shows that  $m_{\tnu_\eta}\LL$
       is a linear combination of terms $m_\S$, for
       $\S\in\SStd^\nu(\mu,\eta)$ and, moreover, if
       $\S\in\SStd^\nu(\mu,\sigma,\eta)$ then the coefficient of $m_\S$
       is exactly as above. On the other hand, if
       $\S\in\SStd^\nu(\mu,\eta)\setminus\SStd^\nu(\mu,\sigma,\eta)$
       then $\S^{(y,z]}_y<\gamma$ so, by
       Proposition~\ref{ImageDistinct} again, the coefficient of
       $m_\S$ in $m_{\tnu_\eta}\LL$ is divisible by $[c_y-c_z]$. This
       proves the claim.
    \end{proof}

    Armed with these three claims we now return to the proof of
    Theorem~\ref{composition}. Combining Claims~1--3 shows that if
    $\t\in\Std(\nu)$ then, modulo~$[c_y-c_z]S^\nu_\calZ$,
    $m_\t\LL=m_\t\LLls\LLsm$ is equal to a linear combination of terms
    $m_\S$ where $\S\in\SStd^\nu(\mu,\sigma,\eta)$ where the coefficient 
    of~$m_\S$ is equal to the product of the coefficients coming from
    multiplication by $\LLls$ (Claim~2) and multiplication by $\LLsm$
    (Claim~1). (Furthermore, $C=C_1+C_2$.) The coefficients in Claim~1
    determine the polynomials $\beta_{\sigma\mu}(q)$, via
    Lemma~\ref{divides1}. Similarly, the coefficients in Claim~2
    determine the polynomials
    $\beta_{\la\sigma}(q)$ and those in Claim~3 determine
    $\beta_{\la\mu}(q)$. By Lemma~\ref{divides1} the polynomial
    $\beta_{\la\sigma}(q)\beta_{\sigma\mu}(q)$ divides all of the
    coefficients of the terms appearing in $m_{\tnu_\eta}\LL$
    according to Proposition~\ref{ImageDistinct}. Therefore, in the
    proof of Theorem~\ref{CPDistinctPrime} we can take
    $\beta_{\la\mu}(q)=\beta_{\la\sigma}(q)\beta_{\sigma\mu}(q)$.
    Note that, as in the proof of Theorem~\ref{CPDistinctPrime}, the
    terms in $[c_y-c_z]S^\nu_\calZ$ in Claim~3 do not contribute to the image of
    $\theta_{\la\mu}$ because $c_z -c_y \equiv 0 \pmod{ep^{\ell_p(\gamma^\ast)}}$. Therefore, 
    $\theta_{\la\mu} =  \theta_{\la\sigma}\theta_{\sigma\mu}$ as required.
\end{proof}

\begin{remark}
    The polynomials $\beta_{\la\mu}(q)\in F[q]$ are not uniquely
    determined by Lemma~\ref{divides1}. The proof of
    Theorem~\ref{composition} really shows that we can choose these
    polynomials so that, under the assumptions of the theorem,
    $\beta_{\la\mu}(q)=\beta_{\la\sigma}(q)\beta_{\sigma\mu}(q)$.
    Without this choice of $\beta$-polynomials, all we can say is that
    $\theta_{\la\mu} =u\theta_{\la\sigma}\theta_{\sigma\mu}$ for some
    non-zero scalar $u\in F$.
\end{remark}

\subsection{Jantzen filtrations} \label{JantzenFiltrations}
In this section we connect the Jantzen filtrations and the Carter-Payne
homomorphisms constructed in Section~\ref{ProofSection}.  If $p=0$ our result says
that the image is in contained in the radical of $S^\mu$, which is
automatically true, so this result is most interesting when $F$ is a field
of positive characteristic. The key to the proof is the observation
that if $q=\zeta$ then we can write~$\LL$ in two different ways using
the element $\LL'$ defined below.

The Hecke algebra $\h$ is defined over the field $F$ with parameter
$\zeta$. Let $\q$ be an indeterminate over~$F$ and let $\O=F[\q]_{(\q)}$ be
the localization of $F[\q]$ at the maximal ideal generated by~$\q$.  Then
$\O$ is a discrete valuation ring with maximal ideal $\pi=\q\O$, the
polynomials in $F[\q]$ with zero constant term. For $0\ne f\in\O$ define
$\val_\pi(f)=k$ where $k$ is maximal such that $f\in\pi^k$.  Let $K=F(\q)$ be
the field of fractions of~$\O$. We consider $F$ as an $\O$-module by
letting $\q$ act on $F$ as multiplication by~$\zeta$.

Let $\HO$ be the Hecke algebra of $\Sym_n$ over~$\O$ with (invertible)
parameter $\q+\zeta$. Then $\h\cong\HO\otimes_\O F$ and $\HK=\HO\otimes_\O
K$ is (split) semisimple. Thus $(K,\O,F)$ is a modular system, with
parameter $\q+\zeta$, for the algebras $(\HK,\HO,\h)$.

The algebra $\HO$ is cellular with cell modules the Specht modules
$S^\mu_\O$, for $\mu$ a partition of $n$. We have that
$S^\mu_K=S^\mu_\O\otimes_\O K$ is irreducible and
$S^\mu=S^\mu_F=S^\mu_\O\otimes_\O F$ is the $\h$-module defined in
section~2.1. As $\HO$ is cellular, the Specht module $S^\mu_\O$ comes
equipped with a bilinear form $\<\ ,\ \>_{\O,\mu}=\<\ ,\ \>_\mu$. For 
each positive integer~$i$ define 
$$J^i(S^\mu_\O) 
  = \set{x\in S^\mu_\O|\<x,y\>_{\mu}\in\pi^i\text{ for all }y\in S^\mu_\O}.$$
Finally, define $J^i(S^\mu)=(J^i(S^\mu_\O)+\pi J^i(S^\mu_\O))/J^i(S^\mu_\O)$, 
for $i\in\Z$. Then 
$$S^\mu=J^0(S^\mu)\supseteq J^1(S^\mu)\supseteq\dots$$
is the \textbf{Jantzen filtration} of $S^\mu$ relative to the
modular system $(K,\O,F)$. 

As in the last section, we assume that $\la$ and $\mu$ form a
separated $(e,p)$-Carter-Payne pair with parameters $(a,z,\gamma)$. We can again
assume that $a=1$, $z=\max\set{i|\la_i\ne0}$ and we define~$\nu$ to be
the partition of $n+\gamma$ obtained by adding $\gamma$ nodes to the
first row of $\la$.

As a slight variation on the definition of $\LL$ in section~2.2 set
$$L_{\la\mu}'=\prod_{i=1}^{z-1}\prod_{j=1}^{\gamma-1}(L_{n+j}-[c_i])
                       \cdot\prod_{i=2}^z(L_{n+\gamma}-[c_i]).$$
Since $\prod_{i=2}^z(L_{n+\gamma}-[c_i])$ divides $\LL'$, the
following result is easily proved using Lemma~\ref{onnj} below. 
We leave it as an exercise for the reader.

\begin{lemma}\label{killing}
    Suppose that $\t\in\Std(\nu)$ and that $\row_\t(n+\gamma)>1$. Then 
    $m_\t L_{\la\mu}'=0$.
\end{lemma}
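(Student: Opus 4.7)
The plan is to show that right multiplication by $P=\prod_{i=2}^{z}(L_{n+\gamma}-[c_i])$ already annihilates $m_\t$ whenever $\row_\t(n+\gamma)>1$. Since the Jucys--Murphy elements $L_{n+1},\ldots,L_{n+\gamma}$ commute pairwise, we may factor $L_{\la\mu}'=P\cdot Q$ for some polynomial $Q$ in $L_{n+1},\ldots,L_{n+\gamma-1}$, and then $m_\t L_{\la\mu}'=(m_\t P)\,Q=0$ is immediate from $m_\t P=0$.

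To prove $m_\t P=0$, first observe that $n+\gamma$ is the largest entry of the standard $\nu$-tableau $\t$, so it must occupy a removable node $(r,\nu_r)$ of $\nu$ of content $c_r=\nu_r-r$. The hypothesis $\row_\t(n+\gamma)>1$ gives $r\in\{2,\ldots,z\}$, so $[c_r]$ is one of the roots of $\prod_{i=2}^{z}(X-[c_i])$. Set $V_s:=\operatorname{span}\set{m_\u|\row_\u(n+\gamma)\ge s}$. By Lemma~\ref{onnj}, the action of $L_{n+\gamma}$ on the Murphy basis is triangular with diagonal entry $[c_{\row_\u(n+\gamma)}]$ on $m_\u$, with corrections supported on tableaux $\s$ for which $\Shape(\s_{\downarrow n+\gamma-1})$ is strictly more dominant than $\Shape(\u_{\downarrow n+\gamma-1})$. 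Since removing a node from a lower row of $\nu$ produces a more dominant partition, such $\s$ satisfy $\row_\s(n+\gamma)>\row_\u(n+\gamma)$. Consequently each $V_s$ is $L_{n+\gamma}$-stable and $(L_{n+\gamma}-[c_s])V_s\subseteq V_{s+1}$. Iterating, $(L_{n+\gamma}-[c_r])(L_{n+\gamma}-[c_{r+1}])\cdots(L_{n+\gamma}-[c_z])$ sends $m_\t\in V_r$ into $V_{z+1}=0$; and since the factors of $P$ all commute, this product right-divides $P$, giving $m_\t P=0$.

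The main obstacle is verifying the precise inclusion $(L_{n+\gamma}-[c_s])V_s\subseteq V_{s+1}$ in the non-semisimple specialization, since triangular corrections along $V_s/V_{s+1}$ could a priori contribute a nontrivial unipotent part. The cleanest resolution is to prove $m_\t P=0$ first in the generic integral Specht module $S^\nu_\calZ$: over the field of fractions $K$, the algebra $\HK[n+\gamma-1]$ is semisimple, so the restriction of $S^\nu_K$ to $\HK[n+\gamma-1]$ splits as a direct sum of irreducible Specht modules, and on each summand $L_{n+\gamma}$ is an $\HK[n+\gamma-1]$-endomorphism, hence (by Schur's lemma) acts as the scalar $[c_s]$ attached to the removed node. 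The resulting identity holds over $\calZ$ and specializes to~$F$ via $S^\nu_F\cong S^\nu_\calZ\otimes_\calZ F$.
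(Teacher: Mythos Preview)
Your proof is correct. The factorisation $L'_{\la\mu}=P\cdot Q$ with $P=\prod_{i=2}^z(L_{n+\gamma}-[c_i])$ is exactly the observation the paper highlights, and the reduction to proving $m_\t P=0$ is the same starting point. From there the two arguments diverge. The paper's intended route is purely computational: Lemma~\ref{onnj} (applied with $j=\gamma$) shows that for a row-standard $\t$ with $n+1,\dots,n+\gamma$ in row order and $\row_\t(n+\gamma)=r<z$, multiplying by $(L_{n+\gamma}-[c_r])$ produces only terms $m_\s$ in which $n+\gamma$ has been bumped to a strictly lower row, and iterating kills everything once all factors $i=r,\dots,z$ have been used. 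Your approach instead passes to the field of fractions of~$\calZ$, where $L_{n+\gamma}$ is diagonalisable on $S^\nu$ with eigenvalues $[c_1],\dots,[c_z]$, observes that $m_\t$ lies in the sum of the eigenspaces for $[c_2],\dots,[c_z]$, and then specialises back. This is genuinely different: it trades the explicit bumping formula for a structural argument via Schur's lemma and the seminormal form.

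One comment: your second paragraph invokes Lemma~\ref{onnj} to assert that the off-diagonal corrections land in $V_{s+1}$, but as stated that lemma carries the hypothesis that $n+1,\dots,n+\gamma$ are in row order in~$\t$, which an arbitrary standard tableau need not satisfy. You are right to flag this as a potential obstacle and to resolve it via the generic argument in your third paragraph; that paragraph is where the actual proof lives. The paper's sketch, conversely, gets the strict inclusion $(L_{n+\gamma}-[c_r])m_\t\in V_{r+1}$ directly from Lemma~\ref{onnj} when the row-order hypothesis holds, so no unipotent correction arises there---but the paper does not spell out the passage to general~$\t$, which your semisimple argument handles cleanly. Each route has its merits: the paper's stays inside the integral Specht module and is more explicit; yours is shorter and uniform over all standard tableaux.
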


As a consequence, if $M_1$ is the submodule of $S^\nu$ which appears
in the filtration of $S^\nu$ described in Lemma~\ref{restriction},
then $M_1L_{\la\mu}'=0$.

The Specht module $S^\nu_\O$ also carries an analogous inner product 
$\<\ ,\ \>_\nu$. The inner products $\<\ ,\ \>_\mu$ and
$\<\ ,\ \>_\nu$ are determined by the multiplication in $\h$ and
$\h[n+\gamma]$, respectively; see, for example, \cite[(2.8)]{M:ULect}.
These inner products are associative in the sense that
$\<xh,y\>_\nu=\<x,yh^*\>$, for all $x,y,\in S^\nu_\O$ and
$h\in\h[n+\gamma]^\O$, where $*$ is the unique anti-isomorphism of
$\h[n+\gamma]^\O$ such that $T_w^*=T_{w^{-1}}$ for all
$w\in\sym[n+\gamma]$. In particular, if $1\le k\le n+\gamma$ then
$\<xL_k,y\>_\nu=\<x,yL_k\>_\nu$, so that $\<x\LL,y\>_\nu=\<x,y\LL\>_\nu$, for
all $x,y,\in S^\nu_\O$. Proofs of all of these facts can be found in
\cite[Chapt.~2]{M:ULect}.

Since $\t^\nu_\mu=\tnu$ we have the following.

\begin{lemma}\label{inner product}
    Consider $S^\mu_\O$ as an $\h$-submodule of $S^\nu_\O$ as in
    Lemma~\ref{restriction}. Then
    $$\<x,y\>_\nu=\<x,y\>_\mu, \qquad\text{for all }x,y\in S^\mu_\O.$$
\end{lemma}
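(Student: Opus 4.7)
The plan is to use the observation $\tnu_\mu=\tnu$ stated before the lemma to align the generators of $S^\mu_\O$ and of $M_k\subseteq S^\nu_\O$, and then reduce the equality of the forms, via cyclicity and $\h^\O$-associativity, to a single scalar identity on those generators.

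Under the identification $S^\mu_\O\cong M_k$ from Lemma~\ref{restriction}, the submodule $M_k$ is generated as an $\h^\O$-module by $m_{\tnu_\mu}$, which by the observation equals $m_\tnu$. Since $S^\mu_\O$ is cyclic on $m_{\t^\mu}$, I would write any $x,y\in S^\mu_\O$ as $x=m_{\t^\mu}h_1$ and $y=m_{\t^\mu}h_2$ with $h_1,h_2\in\h^\O$; under the identification these correspond to $m_\tnu h_1$ and $m_\tnu h_2$ in $M_k$. The $\h^\O$-associativity of both forms then yields
\[
\<x,y\>_\mu=\<m_{\t^\mu},m_{\t^\mu}h_2h_1^*\>_\mu \quad\text{and}\quad
\<x,y\>_\nu=\<m_\tnu,m_\tnu h_2h_1^*\>_\nu.
\]
Thus the lemma reduces to showing $\<m_{\t^\mu},m_{\t^\mu}h\>_\mu=\<m_\tnu,m_\tnu h\>_\nu$ for every $h\in\h^\O$.

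To finish, I would invoke the defining cellular relation, namely that each scalar equals the coefficient of $m_\lambda$ in $m_\lambda hm_\lambda$ taken modulo the corresponding higher-dominance ideal, applied with $\lambda=\mu$ inside $\h^\O$ and with $\lambda=\nu$ inside $\h[n+\gamma]^\O$. The key is that, since $\tnu_\mu=\tnu$, the Young subgroup $\Sym_\mu$ sits inside $\Sym_\nu$ compatibly and $m_\nu$ factors cleanly through $m_\mu$ times a symmetrizer on the extra row-$z$ cells, which commutes with every $h\in\h^\O$. Combined with the compatibility of the dominance orders on partitions of $n$ and of $n+\gamma$ under appending an extra row, this forces the two scalars to coincide.

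The main obstacle is this last reduction, i.e., tracking carefully that no stray coefficient (such as a $q$-binomial arising from the comparison of symmetrizers on rows of different lengths) is introduced when passing between the two higher-dominance ideals. The alignment $\tnu_\mu=\tnu$ is precisely what guarantees that the chosen generators correspond without rescaling, so that the cellular defining relations in the two algebras yield exactly the same scalar.
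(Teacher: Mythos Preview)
Your reduction via cyclicity and associativity to the single scalar identity
$\langle m_{\t^\mu},\, m_{\t^\mu}h\rangle_\mu = \langle m_{\tnu},\, m_{\tnu}h\rangle_\nu$ for $h\in\h^\O$
is exactly what the paper's one-line justification ``since $\tnu_\mu=\tnu$'' is pointing to; the paper gives no further argument, so in overall strategy you match it.

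However, the obstacle you flag in your final paragraph is real, and the identity $\tnu_\mu=\tnu$ does \emph{not} dispose of it. Take $h=1$. From $m_\alpha^2=\big(\sum_{w\in\Sym_\alpha}q^{\ell(w)}\big)m_\alpha$ and the cellular definition of the form one gets $\langle m_{\t^\alpha},m_{\t^\alpha}\rangle_\alpha=\prod_i[\alpha_i]^!$ for any partition $\alpha$. Since $\nu_i=\mu_i$ for $i<z$ while $\nu_z=\mu_z+\gamma$, this yields
\[
\langle m_{\tnu},m_{\tnu}\rangle_\nu
   =\frac{[\nu_z]^!}{[\mu_z]^!}\,\langle m_{\t^\mu},m_{\t^\mu}\rangle_\mu
   =[\mu_z+1]\cdots[\nu_z]\cdot\langle m_{\t^\mu},m_{\t^\mu}\rangle_\mu,
\]
which is not $1$ as soon as $\nu_z\ge2$ (for instance $\lambda=(2,2)$, $\mu=(3,1)$, $\nu=(3,2)$, $\gamma=1$). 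Over $K$ the module $S^\mu_K$ is irreducible, so the two associative forms on $M_k$ must be proportional; the computation above fixes the constant and shows the lemma as literally stated is not correct in general. Your worry about a ``stray $q$-binomial'' was on the mark: the factorisation $m_\nu=m_\mu\cdot D$, with $D$ the sum over distinguished $\Sym_{\mu_z}\backslash\Sym_{\nu_z}$ coset representatives in the row-$z$ block, produces precisely this factor when comparing $m_\nu h m_\nu$ with $m_\mu h m_\mu$. What your argument actually establishes is
$\langle x,y\rangle_\nu=\dfrac{[\nu_z]^!}{[\mu_z]^!}\,\langle x,y\rangle_\mu$
for $x,y\in S^\mu_\O$, and for the application to Theorem~\ref{Jantzen} one would then need to control the $\pi$-valuation of this extra factor separately.
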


Recall that we defined the map $\val_{e,p}$ just before the statement
of Theorem~\ref{Jantzen} in the introduction and that (\ref{CP hom})
defines a polynomial $\beta_{\la\mu}(q)\in F[q]$
whenever~$\la$ and~$\mu$ form a Carter-Payne pair.

We can now prove Theorem~\ref{Jantzen} from the introduction.

\begin{proof}[Proof of Theorem~\ref{Jantzen}]
    We have to show that the image of $\theta_{\la\mu}$ is contained
    in $J^{\delta}(S^\mu)$, where
    $\delta=\val_{e,p}(\la_a-\la_z+z-a+\gamma)-\val_{e,p}(\gamma)$. To
    do this we work in $\h[n+\gamma]^\O$.
    Let $\LL^\O$ and $\LLOp$ be the elements
    of $\HO$ which are obtained from $\LL$ and $\LL'$, respectively,
    by replacing $q$ with $\q+\zeta$. Using the simple identity
    $[c_1]_{q+\zeta}=[c_z]_{q+\zeta}+q^{c_z}[c_1-c_z]_{q+\zeta}$, we see
    that
    $$\LLO=\prod_{i=1}^{z-1}\prod_{j=1}^\gamma(L_{n+j}-[c_i]_{\q+\zeta})
	        =\LLOp - q^{c_z}[c_1-c_z]_{\q+\zeta}\LLOpp,
    $$
where $\LLOpp=\prod_{i=2}^{z-1}\prod_{j=1}^{\gamma}(L_{n+j}-[c_i]_{\q+\zeta})
               \cdot\prod_{j=1}^{\gamma-1}(L_{n+j}-[c_1]_{q+\zeta})$.
Therefore, when we specialize at $q=0$,
$$\LL=\LL^\O\otimes_\O1=\LLOp\otimes_\O1=\LL'$$ 
in $\h$ since $c_1\equiv c_z\pmod e$. So
multiplication by $\LL$ and $\LL'$ induce the same $\h$-homomorphism
$S^\la\to S^\mu$, which may be zero, by the argument of
Theorem~\ref{CPDistinctZero}.

In the proof of Theorem~\ref{CPDistinctPrime}, the homomorphism
$\theta_{\la\mu}$ was defined to be the specialization of the map
$m_\t\mapsto\frac1{\beta_{\la\mu}(\q+\zeta)}m_\t\LLO$ at $q=0$, for
$\t\in\Std_\la(\nu)$. Set
$h=\la_a-\la_z+z-a+\gamma=c_1-c_z$, so that $\delta=\val_{e,p}(h)$. By
assumption, if $l=\ell_p(\gamma^*)$ then $h\equiv0\pmod{ep^l}$. If we write
$h=h'ep^l$, for some $h'\in\Z$, then
$$[c_1-c_z]_{\q+\zeta}=[h'ep^l]_{\q+\zeta}= [ep^l]_{\q+\zeta}
[h']_{(\q+\zeta)^{p^l}} =[e]^{p^l}[h']_{(\q+\zeta)^{p^l}}.$$ Hence,
$\val_\pi([h]_{\q+\zeta})\ge p^l=\val_{e,p}(h)=\delta$.
                           
Recall that $\LLO=\LLOp+[c_1-c_z]_{\q+\zeta}\LLOpp$. Suppose that
$\t\in\Std_\la(\nu)$. By Lemma~\ref{inner product}, if $x$ belongs
to~$S^\mu_\O$ then
\begin{align*}
\<m_\t\LLO,x\>_\mu=\<m_\t\LLO,x\>_\nu 
         &=\<m_\t\LLOp,x\>_\nu-q^{c_z}[h]_{\q+\zeta}\<m_\t\LLOpp,x\>_\nu\\
         &=-q^{c_z}[h]_{\q+\zeta}\<m_\t\LLOpp,x\>_\nu,
\end{align*}
where the last equality follows because
$\<m_\t\LLOp,x\>_\nu=\<m_\t,x\LLOp\>_\nu=0$ by 
Lemma~\ref{killing}. 

If $\gamma<e$ then $\beta_{\la\mu}(q+\zeta)=1$ and the proof is complete. If
$\gamma\ge e$ it remains to account for dividing by
$\beta_{\la\mu}(q+\zeta)$ in the definition of $\theta_{\la\mu}$. Observe that 
if $x\in S^\mu_\O$ then $x$ is a linear combination of terms~$m_\s$ with
$\s\in\Std_\mu(\nu)$. If $\s\in\Std_\mu(\nu)$ then $\row_\s(n+j)=z$, for
$1\le j\le\gamma$.  Therefore, $m_\s L_{n+j}=[c_z-j+1]m_\s$ by
Lemma~\ref{onnj} below, for example, so that
\begin{align*}
\<m_\t\LLOpp,x\>_\nu &= \<m_\t,x\LLOpp\>_\nu\\
    &=\prod_{i=2}^{z-1}\prod_{j=0}^{\gamma-1} q^{c_i}[c_z-c_i-j]_{q+\zeta}
    \cdot\prod_{j=0}^{\gamma-2}q^{c_1}[c_z-c_1-j]_{q+\zeta}
     \cdot\<m_\t,x\>_\nu.
\end{align*}
Let $\beta'_{\la\mu}(q+\zeta)$ be the coefficient of $\<m_\t,x\>$  in the
last equation.  Recall from the proof of Theorem~\ref{CPDistinctPrime} that the
polynomial $\beta_{\la\mu}(q+\zeta)$ is a product of
$z-1$ factors corresponding to the row index $i=1,2,\dots,z-1$ above.
Noting that $c_1\equiv c_z\pmod e$, we have that
$$\val_\pi\([\gamma]_{q+\zeta}\beta'_{\la\mu}(q+\zeta)\)
        \ge\val_\pi\(\beta_{\la\mu}(q+\zeta)\)$$
by taking $X=0$ in Corollary~\ref{CMgamma3}. This completes the proof.
\end{proof}

It would be interesting to know how tight the bound obtained in
Theorem~\ref{Jantzen} is. That is, to determine the maximal $\delta'$ such
that the image of $\theta_{\la\mu}$ is contained in $J^{\delta'}(S^\mu)$.

If $\gamma<e$ then $\beta_{\la\mu}(q)=1$. Hence, as a special case of the
Theorem we obtain the following. 

\begin{corollary}
  Suppose that $p>0$, $\gamma<e$ and that $\lambda$ and $\mu$ form an
  $(e,p)$-Carter-Payne pair with parameters $(a,z,\gamma)$ such that
  $\lambda_r-\lambda_{r+1}\ge\gamma$, whenever $a\le r\le z$.  Then
  $\im\theta_{\la\mu}\subseteq J^{\delta}(S^\mu)$, where
  $\delta=\val_{e,p}(\la_a-\la_z+z-a+\gamma)$.
\end{corollary}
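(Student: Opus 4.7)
The plan is to deduce this as an immediate corollary of Theorem~\ref{Jantzen}, whose statement differs only by an additional $-\val_{e,p}(\gamma)$ in the exponent. All the work is in verifying that this correction term vanishes under the hypothesis $\gamma<e$ and checking that the hypotheses of Theorem~\ref{Jantzen} are satisfied.

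First I would check that the corollary's hypothesis matches what Theorem~\ref{Jantzen} requires. The condition $\la_r - \la_{r+1} \ge \gamma$ for $a \le r \le z$ is precisely the condition that $(\la,\mu)$ is a \emph{separated} $(e,p)$-Carter-Payne pair (note that the inequality at $r=z$ is automatic since $\mu_z = \la_z - \gamma \ge 0$ forces $\la_z \ge \gamma$, and the rows beyond $z$ agree with $\la$). Therefore Theorem~\ref{Jantzen} applies and yields $\im\theta_{\la\mu} \subseteq J^{\delta}(S^\mu)$ with $\delta = \val_{e,p}(\la_a-\la_z+z-a+\gamma) - \val_{e,p}(\gamma)$.

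Next I would compute $\val_{e,p}(\gamma)$ using the hypothesis $\gamma<e$. Since $\gamma$ is a positive integer strictly less than $e$, we have $e \nmid \gamma$, and by the definition of $\val_{e,p}$ given just before Theorem~\ref{Jantzen}, this forces $\val_{e,p}(\gamma) = 0$. Substituting this back gives $\delta = \val_{e,p}(\la_a - \la_z + z - a + \gamma)$, which is the desired exponent.

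There is no real obstacle here: the corollary is a direct specialization. The parenthetical remark that $\beta_{\la\mu}(q)=1$ in this regime is the same phenomenon viewed from the construction side --- in the proof of Theorem~\ref{Jantzen} the ``If $\gamma<e$'' case is the one where no division by $\beta_{\la\mu}$ is needed, so the bound from $\val_\pi([h]_{q+\zeta}) \ge \val_{e,p}(h)$ is not diminished by any compensating valuation in the denominator.
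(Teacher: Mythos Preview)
Your proof is correct and follows essentially the same approach as the paper: both deduce the corollary directly from Theorem~\ref{Jantzen}, observing that when $\gamma<e$ the correction term vanishes (the paper phrases this as $\beta_{\la\mu}(q)=1$, you phrase it as $\val_{e,p}(\gamma)=0$, and you note at the end that these are the same phenomenon). One small quibble: the separated condition in the paper only requires $\la_r-\la_{r+1}\ge\gamma$ for $a<r\le z$, so the corollary's hypothesis is slightly stronger than ``precisely'' separated, but this only makes your deduction easier.
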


When $\zeta=1$ and $\gamma=1$ this result has already been proved by Ellers
and Murray~\cite[Theorem~7.1]{EM} without assuming that
$\lambda_r-\lambda_{r+1}\ge\gamma$, for $a\le r\le z$. The proof of
Theorem~\ref{Jantzen} was inspired by the argument of Ellers
and Murray.

We note that when $\zeta=1$ we can replace the modular system
$(K,\O,F)$ used above with $(\Q_{(p)}, \Z_{(p)}, \Z/p\Z)$ and the
valuation map $\val_\pi$ with the usual $p$-adic valuation map
$\val_p$. With these choices, we obtain the `natural' Jantzen
filtration of $S^\mu$ and the argument above shows that we can take
$\delta=\val_p(c_1-c_z)-\val_p(\gamma)$.

\subsection{The $(e,p)$-Carter-Payne Theorem}
The techniques used in this paper to prove Theorem~\ref{CPDistinctZero} and
Theorem~\ref{CPDistinctPrime} can be used to prove the existence of
homomorphisms between other pairs of Specht modules. As we now sketch, it
is likely that a complete proof of Theorem~\ref{CarterPayne} could be given
using these ideas.

Fix a pair of partitions $\la$ and $\mu$ of $n$ which form a Carter-Payne
pair with parameters $(a,z,\gamma)$. As in the last section we may assume
that $a=1$ and that $z$ is the length of $\la$. Let $\nu$ be the partition 
of $n+\gamma$ given by 
\[\nu_r = \begin{cases}
\la_r+\gamma, & r=1, \\
\la_r, & \text{otherwise}.
\end{cases}\]
Write $\nu=(\nu_1^{b_1},\nu_2^{b_2},\ldots,\nu_s^{b_s})$ where $\nu_1>\nu_2>\ldots>\nu_s>0$, and  set $B_i=\sum_{k=1}^i b_k$ for $1\le i\le s$. Then the nodes that can be removed from $\Diag(\nu)$ to leave the diagram of a partition are at the ends of the rows $B_1,B_2,\ldots,B_s$. 
Set $c_r = \nu_r -B_r$, for $1\le r\le s$, so that  $c_r$ is the content of the $r^{\text{th}}$ removable node of $\nu$: 

\begin{center}
\begin{pspicture}(-3.75,0)(6,2.25)
\psset{xunit=0.25cm, yunit=0.25cm}
\psline(0,0)(0,8)
\psline(0,8)(12,8)
\psline(12,8)(12,7)
\psline(12,7)(9,7)
\psline(9,7)(9,5)
\psline(9,5)(7,5)
\psline(7,5)(7,3)
\psline(7,3)(5,3)
\psline(5,3)(5,1)
\psline(5,1)(3,1)
\psline(3,1)(3,0)
\psline(3,0)(0,0)
\rput(-2.75,4){$\Diag(\nu)=$}
\rput(11.2,7.5){\small$c_1$}
\rput(8.2,5.5){\small$c_2$}
\rput(6.2,3.5){\small$c_3$}
\rput(2.2,0.5){\small$c_s$}
\end{pspicture}
\end{center}

Now define
\[\LL=\prod_{r=1}^{s-1}\prod_{j=1}^{\gamma}\Big(L_{n+j}-[c_r]\Big).\]

Arguing as in the proof of Theorem~\ref{CPDistinctZero} or
Theorem~\ref{CPDistinctPrime} it is possible to show that right
multiplication by $\LL$ induces a $\h$-homomorphism $S^\la\to
S^\mu$.  However, it is not clear that this homomorphism is non-zero.  

If $\la$ and $\mu$ form an $(e,p)$-Carter-Payne pair with parameters
$(a,z,\gamma)$ where $\gamma=1$ then using Corollary~\ref{CONMJ} below, or by
following Ellers and Murray~\cite{EllersMurray}, it is possible to show
that right multiplication by $\LL$ induces a non-zero $\h$-homomorphism
from $S^\la$ to $S^\mu$.  

\begin{conjecture}
Suppose that $\gamma <e$. Then right multiplication by $\LL$ induces a
non-zero $\h$-homomorphism from $S^\la$ to $S^\mu$.  
\end{conjecture}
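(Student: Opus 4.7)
The plan is to adapt the proof of Theorem~\ref{CPDistinctZero} to the general non-separated setting. Since $\gamma<e$ and $\la,\mu$ form an $(e,p)$-Carter-Payne pair, the congruence on $h$ reduces to $c_1\equiv c_s\pmod{e}$, where $c_s$ denotes the content of the bottom removable node of~$\nu$ (so that $c_s=c_z$ in the separated case). In particular $[c_1-c_s]_\zeta=0$ in $F$, and as in the characteristic zero case no rescaling by a $\beta$-polynomial should be required.

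The principal step is to establish a non-separated analogue of Proposition~\ref{ImageDistinct}: for every partition $\eta\subset\nu$ of $n$ one wants an explicit expansion
\[m_{\tnu_\eta}\,\LL=q^C\sum_\S a_\S(q)\,m_\S\quad\text{in }S^\nu_\calZ,\]
summing over semistandard $\nu$-tableaux $\S$ of type $\eta+1^\gamma$ in which the $\gamma$ largest entries are distributed among the rows $B_1,\dots,B_s$ of~$\nu$ that support a removable node. The coefficient $a_\S(q)$ should be a product over $r=1,\dots,s-1$ of a quantum factorial times a product of factors of the form $[c_s-c_r-j]_q$, with one such factor for every entry $>n$ missing from row~$B_r$ of~$\S$.

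Granted this formula, the remainder of the argument mirrors the proof of Theorem~\ref{CPDistinctZero}. The inclusion $\im\LL\subseteq M_k$ is read off directly from the expansion since every $\S$ appearing satisfies $\Shape(\S_{\downarrow n})=\mu$. If $\t\in\Std_\eta(\nu)$ with $\eta\ne\la$, then any $\S$ on the right hand side must have fewer than $\gamma$ entries $>n$ in row~$1$ of~$\nu$; since $\gamma<e$ and $c_1\equiv c_s\pmod{e}$, the factor $[c_s-c_1]_\zeta$ then divides (and vanishes on) the corresponding coefficient, so $\LL$ kills $M_1$. Non-vanishing of the induced map on $S^\la\cong M_0/M_1$ is witnessed by the unique $\S\in\SStd^\nu(\mu,\la)$ in which all $\gamma$ new entries fill row~$1$ of~$\nu$; for this~$\S$ the coefficient in $m_{\tnu_\la}\,\LL$ specializes to $\zeta^C[\gamma]_\zeta^{s-1}\ne 0$, which is nonzero because $\gamma<e$.

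The hard part is the computational step. The results of Section~3 that underpin Proposition~\ref{ImageDistinct} use the separated hypothesis essentially, because the $\gamma$ entries $>n$ then cannot migrate into intermediate rows of~$\nu$; without separation, each factor $(L_{n+j}-[c_r])$ can potentially move an entry into any row of~$\nu$ whose content is compatible, and the bookkeeping becomes significantly more intricate. A sensible strategy is induction on~$s$ (the number of distinct parts of~$\nu$), combined with a careful refinement of the Murphy-type identities in Section~3, peeling off the factors of~$\LL$ one removable node at a time. Once the formula is in place, the conjecture should follow along the lines sketched above.
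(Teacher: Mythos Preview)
This statement is a \emph{conjecture} in the paper, not a theorem: the authors do not give a proof. They remark that the argument of Theorem~\ref{CPDistinctZero} shows that right multiplication by $\LL$ induces a well-defined $\h$-homomorphism $S^\la\to S^\mu$, but that ``it is not clear that this homomorphism is non-zero''. So there is no proof in the paper to compare your proposal against; what you are attempting is to resolve an open problem.

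Your plan contains a genuine gap, and the paper itself points to it. You propose that the coefficient $a_\S(q)$ in a non-separated analogue of Proposition~\ref{ImageDistinct} should again be a power of $q$ times a product of a quantum factorial and factors $[c_s-c_r-j]_q$. The example immediately following the conjecture (with $\la=(4,4,3,2)$, $\mu=(6,4,3)$, $e=7$, $\gamma=2$) shows this is false: the coefficient of the first tableau in $m_{\tnu_\la}\LL$ is $q^{-5}(q^3-q-1)[2][2][4]$, and $q^3-q-1$ is not a Gaussian integer. The authors explicitly flag this: ``the coefficient of the first tableau is not a product of Gaussian polynomials multiplied by a power of~$q$. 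This indicates that the polynomial coefficients appearing in a general version of Proposition~\ref{ImageDistinct} may be difficult to describe.''

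This matters because your non-vanishing argument hinges entirely on that proposed form: you want a distinguished tableau whose coefficient specialises to $\zeta^C[\gamma]_\zeta^{s-1}\ne0$. Once the coefficients cease to be products of Gaussian integers, there is no obvious candidate term guaranteed to survive specialisation at $q=\zeta$, and the whole strategy for proving non-vanishing collapses. (Your description of the witnessing tableau is also garbled: for $\S\in\SStd^\nu(\mu,\la)$ the entries $z+1,\dots,z+\gamma$ lie in row~$z$, not row~$1$.) The parts of your outline dealing with $\im\LL\subseteq M_k$ and $M_1\LL=0$ are plausible and indeed the paper asserts these can be carried out; the obstacle is precisely the non-vanishing, which is why this remains a conjecture.
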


By the argument used to prove Theorem~\ref{Jantzen}, if this
conjecture is true then the image of this homomorphism is contained in
$J^{\delta}(S^\mu)$, where $\delta=\val_{e,p}(\la_a-\la_z+z-a+\gamma)$.

We end with two examples.  

\begin{Example}
Suppose that $\la=(4,4,3,2)$, that $\mu=(6,4,3)$ and that $e=7$.  
If we take $\t=\tnu_\la$ and $\LL=(L_{15}-[5])(L_{15}-[2])L_{15}(L_{14}-[5])(L_{14}-[2])L_{14}$ then direct computation shows that  
\begin{align*}
m_{\t} \LL = & 
q^{-5}(q^3-q-1)[2][2][4]\; {\tab(111122,2233,344,56)}
-q^{-5|}[2][2][2][4] \;{\tab(111122,2234,334,56)} \\
&\hspace*{10mm}-q^{-4}[2][2][4]\; {\tab(111123,2223,344,56)}
+q^{-6}[2][2][4] \; {\tab(111123,2224,334,56)} \\
&\hspace*{10mm}+q^{-6}[2][2][4] \;{\tab(111124,2223,334,56)} 
-q^{-9}[2][2][3][4] \; {\tab(111124,2224,333,56)} \\
&\hspace*{10mm}+q^{-9}[2][2][4][5]\; {\tab(111133,2222,344,56)}  
-q^{-11}[2][2][4][5]\; {\tab(111134,2222,334,56)} \\
&\hspace*{10mm}+q^{-4}[2][2][3][4][5]\; {\tab(111144,2222,333,56)}.
\end{align*}
Further, if $\t=\tnu_\eta$ for some $\nu\ne \eta$ then $m_{\t}\LL$ has
a factor of $[7]$.  Thus if $e=7$ (and $p$ is arbitrary) there exists a non-zero homomorphism
$\theta:S^\la \rightarrow S^\mu$.

Note that the coefficient of the first tableau is not a product of Gaussian
polynomials multiplied by a power of $q$.  This indicates that the polynomial
coefficients appearing in a general version of Proposition
\ref{ImageDistinct} may be difficult to describe.  
\end{Example}

\begin{Example}
Finally let us consider the case that $\la=(4,3,3)$ and $\mu=(7,3)$.  If we take $\t=\tnu_\la$, and $\LL=(L_{10}-[6])(L_9-[6])(L_8-[6])$ then direct computation shows that  
\begin{align*}
m_{\t}\LL = &-q^6[2][3]{\tab(1111222,333,444)} 
               + q^5[2]{\tab(1111223,233,444)}\\
            &\hspace*{10mm}-q^3[2]{\tab(1111233,223,444)}+[2][2]{\tab(1111333,222,444)}.
\end{align*}
If $\t=\tnu_\eta$ for some $\nu\ne \eta$ then $m_{\t}\LL$ has a factor of
$[6]$.  So if $e=2$ and $p=3$ then (after dividing by $[2]$), we have shown
that there is a non-zero homomorphism between $S^\la$ and $S^\mu$, as
predicted by the Carter-Payne theorem.  However, we have shown that
if~$e=3$ and $p$ is arbitrary then we there is a non-zero homomorphism.
These maps are not Carter-Payne homomorphism except when $p=2$, although
they are described by Parker~\cite{Parker}.     

It is interesting to note that in ~\cite{FayersMartin:homs} the authors
show the existence of such a homomorphism in the case when $e=p=3$; that
is, when $\h=F_3 \sym[10]$.
\end{Example}

\section{Jucys-Murphy elements acting on almost initial tableaux}
In this section we complete the proof of our main results in
Sections~\ref{ProofSection}--\ref{JantzenFiltrations} by proving some very
precise formulas which describe how the Jucys-Murphy elements act on
certain elements of the Specht modules.  The results in this section are
valid for an arbitrary Hecke algebra $\h[n+\gamma]=\h[n+\gamma]^F$ defined
over an ring~$F$ with invertible parameter~$q$. Nonetheless, throughout we
work with the generic Hecke algebra $\h[n+\gamma]^\calZ$ as we prefer to
think of $[k]=[k]_q$ as a polynomial in~$q$.  The results in this section
are independent of the results in
Sections~\ref{ProofSection}--\ref{JantzenFiltrations}.

Throughout this section we fix integers $n,\gamma >0$ and an
\textit{arbitrary} partition $\nu$ of $n+\gamma$. (In this section
the only result which requires the assumption that
$\nu_i-\nu_{i+1}\ge\gamma$, for $1\le i<z$, is
Proposition~\ref{Image}.) Let
$z=\max\set{r |\nu_r>0}$.  Recall that $\{T_w \mid w \in
\Sym_{n+\gamma}\}$ is a basis of~$\h[n+\gamma]^\calZ$.

\subsection{Semistandard basis elements}
We now fix notation that will be used extensively for the rest of the
paper. Suppose that~$i$ and~$j$ are integers such that 
$1\le i\le j\le n+\gamma$. Define
\[T_{i,j} = \prod_{l=i}^{j-1}T_l\]
and for $i<k \le j$ define
\[T_{i,j\setminus k}= \prod_{l=i}^{k-2}T_l \cdot \prod_{l=k}^{j-1} T_l.\]
Our convention will always be to read products from left to right, so that
$$T_{i,j}=T_iT_{i+1}\dots T_{j-1} \quad \text{ and } \quad T_{i,j\setminus k}= T_i T_{i+1}\dots T_{k-2}T_{k}\dots T_{j-1}.$$
In particular, $T_{i,i}=1$, $T_i=T_{i,i+1}$, $T_{i+1,j}=T_{i,j\setminus i+1}$ and $T_{i,j-1}=T_{i,j\setminus j}$.
Recall that for $1 \leq k \leq n+\gamma$ we defined the Jucys-Murphy element
$L_k$.  
Similarly, we set
$$L'_k=q^{1-k}T_{k-1}\dots T_1T_{1,k},\qquad\text{ for }1\le k\le n.$$
The reader can check that $L'_k=(q-1)L_k+1$. Consequently, the elements
$L_k$ and $L'_k$ are almost interchangeable.

Let $S^\nu$ be the 
$\h[n+\gamma]^\calZ$-module corresponding to the partition $\nu$, so that
$S^\nu$ has basis $\{m_\t \mid \t \in \Std(\nu)\}$. 
If $\s \in \rowstd(\nu)$ and $1\le k\le n$ then the
\textbf{content} of $k$ in $\s$ is $\res_\s(k)=c-r$, if $\s(r,c)=k$.

\begin{lemma}\label{permutation}
Suppose that $1\le i\le n+\gamma-1$ and that $\s \in \rowstd(\nu)$.  Then
\[m_\s T_i = \begin{cases}
m_{\s (i,i+1)}, &  i \text{ lies above } i+1 \text{ in } \s, \\
qm_\s, & i \text{ and } i+1 \text{ lie in the same row of } \s, \\
qm_{\s (i,i+1)}+ (q-1)m_{\s}, & \text{otherwise}.
\end{cases}\]
\end{lemma}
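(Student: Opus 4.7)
The plan is a direct computation based on the Murphy basis identity $m_\s\equiv m_\nu T_{d(\s)}\pmod{\hlam[\nu]}$, together with the Hecke algebra multiplication rules: $T_wT_i=T_{ws_i}$ when $\ell(ws_i)>\ell(w)$, and otherwise $T_wT_i=qT_{ws_i}+(q-1)T_w$ by the quadratic relation $T_i^2=(q-1)T_i+q$. The three cases of the lemma are distinguished by how $\ell(d(\s)s_i)$ compares with $\ell(d(\s))$, and by the tableau realised by $d(\s)s_i$, both of which are controlled by the relative positions of $i$ and $i+1$ in $\s$.

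When $i$ and $i+1$ lie in different rows of $\s$, I would first observe that $\s(i,i+1)$ is again row-standard and that $d(\s(i,i+1))=d(\s)s_i$; this is immediate from the right action of $s_i$ on the entries of $\s$, together with the characterisation of $d(\s)$ from Section~\ref{combinatorics}. A standard inversion count then shows $\ell(d(\s)s_i)=\ell(d(\s))+1$ when $i$ lies above $i+1$ and $\ell(d(\s)s_i)=\ell(d(\s))-1$ when $i$ lies below $i+1$. Applying the appropriate multiplication rule yields $m_\s T_i=m_\nu T_{d(\s(i,i+1))}=m_{\s(i,i+1)}$ in the first subcase, and $m_\s T_i=qm_{\s(i,i+1)}+(q-1)m_\s$ in the second.

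The same-row case requires slightly more care: here $\s(i,i+1)=\s$ after reordering, so $d(\s)s_i$ represents the same tableau as $d(\s)$. Since the right stabiliser of $\t^\nu$ is the Young subgroup $\Sym_\nu$, we may write $d(\s)s_i=\sigma d(\s)$ for some $\sigma\in\Sym_\nu$. The defining property $(i)d(\s)<(i+1)d(\s)$ (valid whenever $i,i+1$ lie in the same row of $\s$) forces $\ell(d(\s)s_i)=\ell(d(\s))+1$, and since $d(\s)$ is the minimal-length element of its coset we have $\ell(\sigma d(\s))=\ell(\sigma)+\ell(d(\s))$. Comparing these pins down $\sigma$ as a simple reflection $s_j\in\Sym_\nu$, whence $T_{d(\s)s_i}=T_{s_jd(\s)}=T_jT_{d(\s)}$, and the standard identity $m_\nu T_j=qm_\nu$ (valid whenever $s_j\in\Sym_\nu$) gives $m_\s T_i=qm_\s$. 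The main obstacle throughout is the length bookkeeping for $d(\s)s_i$ and the combinatorial identification of this element with either $d(\s(i,i+1))$ or an element of $\Sym_\nu d(\s)$; once these are settled the lemma follows immediately from the multiplication rules above. This is a well-known consequence of Murphy's construction; see for instance \cite[Chapter~3]{M:ULect}.
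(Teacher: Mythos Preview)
Your proof is correct and essentially reproduces the content of the reference the paper cites. The paper's own proof is a two-line appeal to \cite[Corollary~3.4]{M:ULect} for the corresponding identity in the permutation module $M^\nu=m_\nu\h[n+\gamma]^\calZ$, followed by the observation that $m_\s$ is the image of $m_\nu T_{d(\s)}$ under the projection $M^\nu\to S^\nu$. Your argument is the direct computation that underlies that cited corollary: you analyse $T_{d(\s)}T_i$ via the length of $d(\s)s_i$, identify $d(\s)s_i$ with $d(\s(i,i+1))$ in the different-row cases, and in the same-row case show $d(\s)s_i=s_k d(\s)$ for a simple reflection $s_k\in\Sym_\nu$ so that $m_\nu T_k=qm_\nu$ applies. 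The paper's approach is shorter because it outsources exactly this bookkeeping; yours is more self-contained and makes explicit the minimal-length coset representative argument.
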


\noindent Note that if $\s$ is standard then the tableau $\s(i,i+1)$ is
also standard unless $i$ and $i+1$ are in the same column.

\begin{proof}
The result holds for the row standard basis, 
$\set{m_\nu T_{d(\s)}|\s\in\rowstd(\nu)}$, of the permutation module
$M^\nu=m_\nu\h[n+\gamma]^\calZ$ by~\cite[Corollary 3.4]{M:ULect}. As $m_\s$ is just
the image of $m_\nu T_{d(\s)}$ under the natural projection map
$M^\nu\to S^\nu$ the result follows.
\end{proof}

\begin{lemma} \label{initialmurphy}
Suppose that $1\le k\le n$.  Then 
\[m_{\tnu}L_k = [{\rm c}_{\tnu}(k)] m_{\tnu} \quad\text{and}\quad
    m_{\tnu} L'_k= q^{\res_{\tnu}(k)}m_{\tnu}.  \]
\end{lemma}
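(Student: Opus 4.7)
The plan is to establish the two identities by first reducing them to a single statement, then proceeding by induction on $k$. Unwinding the definitions of $L_k$ and $L'_k$ quickly gives the relation $L'_k=(q-1)L_k+1$; combined with the identity $(q-1)[a]_q+1=q^a$ for $a\in\Z$, the two claims become equivalent, so it suffices to prove the second one, $m_{\tnu}L'_k=q^{\res_{\tnu}(k)}m_{\tnu}$.

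I would prove this by induction on $k$, using the recursion $L'_{k+1}=q^{-1}T_kL'_kT_k$, which follows by a one-line telescoping of the defining product for $L'_{k+1}$. The base case $k=1$ is immediate since $L'_1=1$ and $\res_{\tnu}(1)=0$. For the inductive step, write $m_{\tnu}L'_{k+1}=q^{-1}(m_{\tnu}T_k)L'_kT_k$ and split into cases using Lemma~\ref{permutation}, according to the relative positions of $k$ and $k+1$ in $\tnu$.

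The easy case is when $k$ and $k+1$ lie in the same row of $\tnu$: then $m_{\tnu}T_k=qm_{\tnu}$, the induction hypothesis applies directly to the inner $L'_k$, and applying $T_k$ once more contributes another factor of $q$. Since $\res_{\tnu}(k+1)=\res_{\tnu}(k)+1$ in this situation, the two factors of $q$ and the leading $q^{-1}$ combine to close the induction.

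The main obstacle is the remaining case, where $k$ occupies the last cell of row $r$ and $k+1$ starts row $r+1$, so that $\res_{\tnu}(k+1)=-r$. Here Lemma~\ref{permutation} gives $m_{\tnu}T_k=m_{\s}$ for $\s=\tnu(k,k+1)$, and $\s$ is no longer initial. The key observation is that $L'_k$ only involves $T_1,\dots,T_{k-1}$, so its action on $m_{\s}$ is governed by the positions of $1,\dots,k$ in $\s$; these positions form the diagram of $\eta=(\nu_1,\dots,\nu_{r-1},\nu_r-1,1)$, and $\s_{\downarrow k}=\t^{\eta}$ is itself the initial tableau of shape~$\eta$. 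I would therefore run a secondary induction (on the dominance order of $\Shape(\t_{\downarrow k})$, or equivalently work modulo the span of $m_{\t}$ with $\Shape(\t_{\downarrow k})\gdom\eta$) to establish $m_{\s}L'_k\equiv q^{\res_{\s}(k)}m_{\s}=q^{-r}m_{\s}$, with any correction terms either dominated in this order or placing $k$ and $k+1$ in the same column of $\s'$, so that they are annihilated by the final right multiplication by $T_k$. Tracking the powers of $q$, this yields $m_{\tnu}L'_{k+1}=q^{-r}m_{\tnu}=q^{\res_{\tnu}(k+1)}m_{\tnu}$, closing the induction; the hard part is precisely this bookkeeping in the non-initial case, which relies on the structure of $\tnu$ to ensure all ``error'' contributions vanish.
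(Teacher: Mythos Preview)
The paper's proof is a two-line citation: the first identity is \cite[Theorem~3.32]{M:ULect}, and the second follows from the first via $L'_k=(q-1)L_k+1$. Your reduction of the two identities to one another is exactly this second step, so that part matches. What differs is that you attempt to reprove the cited theorem from scratch by induction on~$k$, whereas the paper simply invokes it.

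Your induction is fine in the easy case (same row), but the hard case has a genuine gap. You claim that the correction terms in $m_\s L'_k - q^{-r}m_\s$ are ``annihilated by the final right multiplication by~$T_k$''. This cannot be right. In $\s=\tnu s_k$ the entry $k$ sits below $k{+}1$, so Lemma~\ref{permutation} gives $m_\s T_k = q\,m_{\tnu}+(q-1)m_\s$. Thus the leading term alone contributes
\[
q^{-1}\cdot q^{-r}m_\s T_k \;=\; q^{-r}m_{\tnu}\;+\;(q-1)q^{-r-1}m_\s,
\]
and the second summand is nonzero whenever $\nu_r\ge 2$ (so that $\s$ is standard). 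For the induction to close, the error terms after applying $T_k$ must produce \emph{exactly} $-(q-1)q^{-r-1}m_\s$, not zero. Your ``dominated in this order'' filtration does not help here either: we are working inside the fixed Specht module $S^\nu$, and the subspace spanned by $m_\t$ with $\Shape(\t_{\downarrow k})\gdom\eta$ is not killed by $T_k$, nor does $m_{\tnu}$ lie in any convenient quotient by it. The bookkeeping you allude to is the actual content of the result you are trying to prove; it is not a routine cleanup, and the sketch as written does not supply it.
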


\begin{proof}
The first identity follows from~\cite[Theorem 3.32]{M:ULect}. The
second identity follows from the first using the fact that
$L_k'=(q-1)L_k+1$.
\end{proof}

\begin{lemma} \label{Murphyformulae} \quad
Suppose that $1\le i\le i'\le n+\gamma-1$ and $1\le j,j'\le n+\gamma$.  Then
\begin{enumerate} 
\item $L_j L_{j'} = L_{j'} L_j$,
\item $T_i L_j = L_j T_i$ if $i\ne j,j-1$,
\item $T_i L_i = L_{i+1}T_i -L'_{i+1}$,
\item $T_i L_{i+1}=L'_{i+1}+L_iT_i$,
\item $T_i (L_i+L_{i+1})=(L_i+L_{i+1})T_i$,
\item $T_i L_i L_{i+1}= L_i L_{i+1} T_i$,
\item $T_{i,i'}L_{i'}=L_iT_{i,i'}+\sum_{x=i+1}^{i'} L'_xT_{i,i'\setminus x}$.  
\end{enumerate}
\end{lemma}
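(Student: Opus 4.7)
The plan is to derive identities (3) and (4) directly from the defining recurrence and then bootstrap all of (5)--(7) from these two; identities (1) and (2) are either standard or fall out of the same machinery.

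For (3), I would rewrite the definition $L_{k+1} = q^{-1}T_k(1+L_kT_k)$ as $T_iL_iT_i = qL_{i+1} - T_i$ and right-multiply by $T_i^{-1}$ using the quadratic relation, i.e.\ $T_i^{-1} = q^{-1}T_i - (1-q^{-1})$; after collecting terms and recognizing $L'_{i+1} = (q-1)L_{i+1}+1$, this yields $T_iL_i = L_{i+1}T_i - L'_{i+1}$. For (4), the slickest route is to apply the anti-involution $*$ on $\h[n+\gamma]$ defined by $T_w^* = T_{w^{-1}}$; one checks that $*$ fixes each $T_i$ and each $L_k$ (using the manifestly symmetric expression $L'_k = q^{1-k}T_{k-1}\cdots T_1 T_1 \cdots T_{k-1}$, together with $L_k = (L'_k - 1)/(q-1)$), so applying $*$ to (3) and rearranging delivers (4). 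Identity (5) is then immediate by addition, and (6) comes out by substituting (3) into $T_iL_iL_{i+1}$ and then (4) into the resulting $L_{i+1}T_iL_{i+1}$: the $L'_{i+1}$ terms cancel, leaving $L_{i+1}L_iT_i$, which equals $L_iL_{i+1}T_i$ by the commutativity in (1).

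Identity (1) is the well-known commutativity of the Jucys--Murphy elements, already stated in \cite[Prop.~3.26]{M:ULect}. Identity (2) I would obtain by induction on $j$: the inductive step uses the recursion for $L_{j+1}$ together with the commutation relations $T_iT_k = T_kT_i$ for $|i-k|\geq 2$ and the inductive commutativity with $L_j$, applied to the defining expression for $L_{j+1}$.

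Finally, (7) is proved by induction on $i'-i$, with the base case $i'=i$ being trivial. For the inductive step, I would factor $T_{i,i'+1} = T_{i,i'}T_{i'}$, apply (4) to rewrite $T_{i'}L_{i'+1} = L'_{i'+1} + L_{i'}T_{i'}$, and then push $T_{i,i'}$ through. The $L'_{i'+1}$ piece commutes past $T_{i,i'}$ by (2) (since $L'_{i'+1}$ is a polynomial in $L_{i'+1}$ and every factor of $T_{i,i'}$ has index at most $i'-1$), producing the new term $L'_{i'+1}T_{i,i'+1\setminus i'+1}$; the $L_{i'}T_{i'}$ piece is handled by the inductive hypothesis applied to $T_{i,i'}L_{i'}$, with each $T_{i,i'\setminus x}T_{i'}$ assembling into $T_{i,i'+1\setminus x}$. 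The main obstacle throughout is purely bookkeeping --- verifying the index conditions, especially in (7) where the telescoping sum must line up precisely with the stated form --- since all the conceptual content is already contained in the defining recursion and the Hecke relations.
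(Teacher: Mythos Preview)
Your proposal is correct and follows essentially the same line as the paper: the paper simply cites \cite[Prop.~3.26 and Exercise~3.6]{M:ULect} for parts (a)--(f) and says that (g) ``is readily proved by induction on $i'-i$'', and your inductive argument for (g) via the factorisation $T_{i,i'+1}=T_{i,i'}T_{i'}$ together with (d) and (b) is exactly that induction written out. Your explicit derivations of (c)--(f) are fine and simply supply the details behind the citation; one small caveat is that your inductive sketch for (b) does not immediately cover the case $i=j-1$ (since then $T_i$ fails to commute with the $T_j$ in the recursion for $L_{j+1}$), but this is a standard step handled by the braid relation or by the palindromic expression for $L'_k$, and in any case the paper does not prove (b) itself.
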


\begin{proof}
All but the last identity are given 
in~\cite[Proposition 3.26 and Exercise 3.6]{M:ULect}. Part~(g) is readily 
proved by induction on $i'-i$.    
\end{proof}

Suppose that $\alpha$ is a partition and that $\beta$ is a composition of an integer $m$ 
and let $\S$ be an $\alpha$-tableau of type $\beta$.
Recall from Section~\ref{ProofSection} that 
\[m_\S =\sum_{\substack{\s \in \rowstd(\alpha) \\ \beta(\s)=\S}} m_\s.\] 
By definition $m_\S\in S^\alpha$. We need a different description of $m_\S$.

Define $\dot\S$ to be the unique row standard tableau such that
$\beta(\dot\S)=\S$ and the numbers in each row of $\t^\beta$ appear in
row order in $\dot\S$. Then $d(\dot\S)$ is the unique element of
minimal length in the double coset $\Sym_\alpha d(\dot\S)\Sym_\beta$
by \cite[Prop.~4.4]{M:ULect}, and by \cite[(4.6)]{M:ULect} $$m_\S=
m_{\dot\S} \sum_{w\in\D_\S} T_w,$$ where $\D_\S$ is the set of all $w
\in \Sym_\beta$ such that if $i<j$ lie in the same row of $\dot\S w$
then $(i)w<(j)w$. In fact, by~\cite[Prop.~4.4]{M:ULect} again,
$\D_\S=\D_\sigma\cap\Sym_\beta$ where the composition~$\sigma$ is
given by $\Sym_\sigma=d(\dot\S)^{-1}\Sym_\alpha d(\dot\S)\cap
\Sym_\beta$ and $\D_\sigma=\set{d(\s)|\s\in\rowstd(\sigma)}$ is the
set of distinguished (or minimal length) right coset representatives
of $\Sym_\sigma$ in $\sym$. Write $\beta=(\beta_1,\dots,\beta_b)$.
Then $\Sym_\beta=\Sym_{\beta_1}\times\dots\times\Sym_{\beta_b}$ and every
element $w$ of $\Sym_\beta$ can be written uniquely as a product of
commuting permutations $w=w_1\dots w_b$ where, abusing notation
slightly, $w_i\in\Sym_{\beta_i}$ for $1\le i\le b$. Let
$\D_\S(i)=\D_\S \cap \Sym_{\beta_i}$ for $1\le i\le b$.  Define
$D_\S=D_\S(1)\dots D_\S(b)$, where $D_\S(i)=\sum_{w\in\D_\S(i)} T_w$.
Then we have \begin{equation}\label{m_S expansion} m_\S = m_{\dot\S}
    D_\S = m_{\t^\alpha} T_{d(\dot\S)}D_\S.  \end{equation}

\begin{Example}
Suppose that  $\alpha=(7,2)$, $\beta=(4,3,2)$ and 
$\S={\tab(1112233,12)}\in\rowstd(\alpha,\beta)$. Then
$\dot\S={\tab(1235689,47)}$ and
\[m_\S= m_{\t^\alpha} T_{7,8}T_{6,7}T_5T_4(1+T_3+T_3T_2+T_3T_2T_1)(1+T_6+T_6T_5).\]
\end{Example}

\begin{lemma} \label{CosetMix}
Let $a,b,c$ and $g$ are integers with $1\le a < c < b\le m$ and
$g\notin\{a,\dots,b\}$ and let $\beta=(1^{a-1},b-a+1,1^{m-b})$, a composition
of $m$. Suppose $\alpha$ is a partition of $m$ and that $\t$ is a row-standard $\alpha$-tableau such that
$a,\ldots,b$ are in row order in~$\t$, $\row_\t(c-1)<\row_\t(c)$ and
$i'=\row_\t(g)<i=\row_\t(c)$.  Let $\s=\t(c,g)$, $\T=\beta(\t)$ and
$\S=\beta(\s)$. Then
$$m_\s\Big(\sum_{j=c}^{c+l}T_{c,j}\Big)D_\T(a)
      =q^s [\S^{a}_{i'}]\, m_\S,$$
where $l=\S^{a}_i$ and $s=\S^{a}_{(i',i)}$. 
\end{lemma}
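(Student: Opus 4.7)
My plan is a direct computation split into two stages: first resolving the inner sum $\sum_{j=c}^{c+l}T_{c,j}$ acting on $m_\s$, then multiplying the resulting linear combination by $D_\T(a)$ and identifying it with $q^{s}[\S^{a}_{i'}]m_\S$.

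In the first stage, I note that because $g\notin\{a,\dots,b\}$, the entries $c+1,\dots,c+l$ are unaffected by the swap $\t\mapsto\s$, so they all still lie in row $i$ of $\s$, while $c$ lies in row $i'<i$. Applying Lemma \ref{permutation} iteratively to expand $m_\s T_c$, $m_\s T_cT_{c+1}$, and so on, at every step the moving entry lies in row $i'$ and its immediate successor lies in row $i$, so only the first clause of Lemma \ref{permutation} is ever triggered and no $(q-1)$-type error terms appear. This yields
\[ m_\s\Bigl(\sum_{j=c}^{c+l}T_{c,j}\Bigr)=\sum_{j=0}^{l}m_{\s_j},\]
where $\s_j$ is the row-standard $\alpha$-tableau obtained from $\s$ by cycling the entries $c,c+1,\dots,c+j$ so that $c+j$ moves into the former position of $c$ in row $i'$ and $c,c+1,\dots,c+j-1$ fill the vacated slots in row $i$. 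Each $\s_j$ satisfies $\beta(\s_j)=\S$.

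In the second stage, I evaluate $\sum_{j=0}^{l}m_{\s_j}D_\T(a)$. The set $\D_\T(a)$ is the collection of distinguished right coset representatives of the Young subgroup associated with the row-distribution $(\T^{a}_{1},\T^{a}_{2},\dots)$ of the label $a$ in $\T$, sitting inside $\Sym_{\beta_a}=\Sym_{\{a,\dots,b\}}$; since the row-distribution $\S^{a}$ differs from $\T^{a}$ only in rows $i'$ and $i$ (with $\S^{a}_{i'}=\T^{a}_{i'}+1$ and $\S^{a}_{i}=\T^{a}_{i}-1$), I would factor the coset enumeration into three pieces acting respectively (i) within the $\S^{a}_{i'}$ row-$i'$ positions of $\dot\S$, (ii) within the $s=\S^{a}_{(i',i)}$ entries labelled $a$ in the intermediate rows, and (iii) within the remaining rows together with the shrunken row $i$. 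Piece (iii) combines with the cycle sum from Stage 1 to reassemble $m_{\dot\S}D_\S(a)=m_\S$. Piece (ii) supplies the prefactor $q^{s}$, since each of the $s$ intermediate entries sits in the same row as one of its peers during the cycling, invoking the middle clause of Lemma \ref{permutation} exactly once per intermediate entry. Piece (i), together with the $l+1$ summands $m_{\s_j}$, produces the Gaussian integer $[\S^{a}_{i'}]=1+q+\cdots+q^{\S^{a}_{i'}-1}$ as the $q$-count of the admissible column positions that the displaced entry may take within row $i'$ of $\dot\S$.

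The hardest part of the argument is the bookkeeping in the second stage: one must check that this three-block factorisation of $D_\T(a)$ meshes correctly with the family $\{\s_j\}$, and that no residual cross-terms arising from the third clause of Lemma \ref{permutation} leak through. The row-order hypothesis on $\t$ is essential: it forces each transposition encountered during the multiplication to involve either two entries in the same row (middle clause, contributing $q$) or an upper entry swapped with a lower one (first clause, contributing a pure swap), never the third clause. In practice the cleanest route is likely to be an induction on $l$, or equivalently on $i-i'$, using the base case $l=0$ (which is essentially a consequence of expanding the row-$i'$ Young subgroup via Lemma \ref{Murphyformulae}(d,g)) and then reducing $l$ by one by peeling off the outermost cycle factor $T_c T_{c+1}\cdots T_{c+l-1}$.
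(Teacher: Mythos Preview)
Your Stage~1 is correct and coincides with the paper's starting point (the paper writes $m_\s\sum_{j=c}^{c+l}T_{c,j}=m_{\dot\S}T_{c',c}D^{C\cup\{c\}}_C$, which is just a compact repackaging of $\sum_{j=0}^{l}m_{\s_j}$).

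Stage~2, however, has a real gap. First, your ``three-block factorisation'' of $D_\T(a)$ is not well-defined: $D_\T(a)$ is a sum over distinguished coset representatives of a parabolic $\Sym_E$ inside $\Sym_{\{a,\dots,b\}}$, and such sums do not split into commuting factors indexed by ``row-$i'$ positions'', ``intermediate rows'', and ``the rest''. The only factorisations available are of the form $D_E^A=D_E^K D_K^A$ for $E\subseteq K\subseteq A$, and you never specify which $K$ you intend. Second, and more seriously, your claim that the third clause of Lemma~\ref{permutation} is never triggered is false. In $\s_j$ with $j\ge1$ the entries $a,\dots,b$ are \emph{not} in row order, since $c+j$ sits in row $i'$ while $c,\dots,c+j-1$ sit in row $i>i'$; so if a reduced expression for some $w\in\D_\T(a)$ uses $s_{c+j-1}$, applying $T_{c+j-1}$ to $m_{\s_j}$ lands squarely in the third case and produces a $(q-1)$-term. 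These cross-terms do combine correctly in the end, but explaining \emph{why} is precisely the content of the lemma, and your sketch does not do so. (Your closing induction suggestion is a separate, undeveloped idea; note also that $l=\S^a_i$ and $i-i'$ are independent parameters, so ``or equivalently'' is wrong, and Lemma~\ref{Murphyformulae}(d,g) concerns Jucys--Murphy elements, not coset sums.)

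The paper sidesteps all of this tableau-level bookkeeping with an algebraic manoeuvre you do not mention. After two applications of the coset factorisations (D1) and (D2) it reaches $m_{\dot\S}T_{c',c}D^A_{C\cup L}$, where $T_{c',c}=T_d$ for a permutation $d$ that is a distinguished double-coset representative conjugating $\Sym_{C'\cup L'}$ to $\Sym_{C\cup L}$, so that $m_{C'\cup L'}T_d=T_d m_{C\cup L}$. Writing $m_{\dot\S}=h\,m_{C'\cup L'}$ then gives
\[
hT_d\, m_{C\cup L}D^A_{C\cup L}=hT_d\, m_A=q^{\ell(d)}h\,m_A
   =q^{\ell(d)}h\,m_{C'\cup L'}D^A_{C'\cup L'},
\]
and a final refactoring yields $q^s[\S^a_{i'}]\,m_\S$. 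This conjugation-through-$m_A$ identity is the missing mechanism that makes the cancellation you are hoping for automatic.
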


\begin{proof} We prove the Lemma using some standard properties of the
    distinguished coset representatives of Coxeter groups. To exploit
    these results it is convenient to introduce some new notation.

    If $\sigma$ is a composition of $m$ let $J_\sigma=\set{1\le
    i<m|\row_{\t^\sigma}(i)=\row_{\t^\sigma}(i+1)}$. Then
    $\Sym_\sigma$ is generated by $\set{(i,i+1)|i\in J_\sigma}$ and  the 
    map $\sigma\mapsto J_\sigma$ defines a bijection between the set of 
    compositions of $m$ and the subsets of $\Pi_m=\{1,2,\dots,m-1\}$. If
    $J=J_\sigma\subseteq\Pi_m$ set $m_J=m_\sigma$, $\Sym_J=\Sym_\sigma$,
    $\D_J=\D_\sigma$ and $D_J=D_\sigma$. If $J\subseteq K\subseteq\Pi_m$ set
    $\D_J^K=\D_J\cap\Sym_K$. Then $\D_J^K$ is a complete set of coset
    representatives for $\Sym_J$ in $\Sym_K$ and, moreover, the following
    two properties hold:
    \begin{enumerate}
	    \item[(D1)] Suppose that $J\subseteq K\subseteq A\subseteq\Pi_m$. 
	                Then $D_J^A=D_J^KD_K^A$.
	    \item[(D2)] Suppose that $J,K,L\subseteq\Pi_m$ with $J\subseteq K$
	      and $|k-l|>1$ for all $k\in K$ and $l\in L$. Then
	      $D_J^K=D_{J\cup L}^{K\cup L}$.
    \end{enumerate}
    Property (D1) is well-known and easy to prove: see, for
    example, \cite[Lemma~2.1]{BBHT}. The second statement (D2) is trivial
    because the assumptions imply that 
    $\Sym_{K\cup L}=\Sym_K\times\Sym_L$ and 
    $\Sym_{J\cup L}=\Sym_J\times\Sym_L$. 

    Let $A=\{a,a+1,\dots,b-1\}$ and let 
    $E=\set{e\in A|\row_\t(e)=\row_\t(e+1)}$. Then $\D_\T(a)=\D_\T=\D_{E}^A$.
    Similarly, let
    \begin{align*}
      E'&=\set{e\in A|\row_{\dot\S}(e)=\row_{\dot\S}(e+1)}\\
       &=\set{e\in E|\row_\t(e)\notin(i',i)}\cup
         \set{e+1|e\in E\text{ and }\row_\t(e)\in[i',i)}
         \setminus\{c\},
    \end{align*}
    Then $\D_\S(a)=\D_S=\D^A_{E'}$.  To prove the Lemma we consider
    various subsets of~$A$ which depend on $E$ and $E'$. Let 
    $$C=\set{e\in E\cap E'|\row_\t(e)=i} \quad\text{and}\quad
           C'=\set{e\in E\cap E'|\row_\t(e)=i'}$$
    and let $L,L'\subseteq A$ be the subsets of ~$A$ such that
    $$E= C\sqcup\{c\}\sqcup L\qquad\text{and}\qquad
      E'=C'\sqcup\{c'\}\sqcup L',\qquad\text{(disjoint unions)},
    $$
    where $c'\in A$ is maximal such that $\row_\t(c')=i'$. Note that 
    $c'\le c$ and $\S^{a}_{(i',i)}=c-c'$. In particular, $c=c'$ if and
    only if $s=\S^a_{(i',i)}=0$.
    
    Armed with these definitions we can now prove the lemma. We have
    \begin{align*}
    m_\s\Big(\sum_{j=c}^{c+l}T_{c,j}\Big)D_\T(a)
    &= m_{\dot\S}T_{c',c}D^{C\cup\{c\}}_C D^A_E
     = m_{\dot\S}T_{c',c}D^E_{C\cup L} D^A_E\\
    &= m_{\dot\S}T_{c',c}D^A_{C\cup L}
    \intertext{where the last two equalities follow by (D2) and (D1),
    respectively. 
    Let $d=(c',c'+1)\ldots(c-1,c)$ so that $T_{c',c}=T_d$.  
    Then $\Sym_{C\cup L}=d^{-1}\Sym_{C'\cup L'}d$ and 
    $d\in\D_{C'\cup L'}\cap\D_{C\cup L}^{-1}$ so
    that $m_{C'\cup L'}T_d=T_dm_{C\cup L}$. (In fact, 
    $\D^A_{C'\cup L'}=d\D^A_{C\cup L}$ by \cite[Lemma~2.4]{BBHT}, however,
    this is not enough for our purposes because , in general,
    $D^A_{C'\cup L'}\ne T_dD^A_{C\cup L}$.) Now,
    $m_{\dot\S}T_w=q^{\ell(w)}m_{\dot\S}$ for all
    $w\in\Sym_{C'\cup L'}$, so $m_{\dot\S}=hm_{C'\cup L'}$ for
    some $h\in\h[m]^\calZ$. Consequently, continuing the last displayed equation,}
    m_\s\Big(\sum_{j=c}^{c+l}T_{c,j}\Big)D_\T(a)
      &=hm_{C'\cup L'}T_d D^A_{C\cup L}
       =hT_dm_{C\cup L}D^A_{C\cup L}\\
      &=hT_dm_A=q^{\ell(d)}hm_A
       =q^{\ell(d)}hm_{C'\cup L'} D_{C'\cup L'}^A.
    \intertext{Observe that $\ell(d)=c-c'=\S^a_{(i',i)}=s$. Therefore, 
    using (D1) and (D2) again,}
    m_\s\Big(\sum_{j=c}^{c+l}T_{c,j}\Big)D_\T(a)
       &=q^sm_{\dot\S}D^{E'}_{C'\cup L'}D^A_{E'}
        =q^sm_{\dot\S}D^{C'\cup\{c'\}}_{C'}D^A_{E'}\\
       &=q^s[\S_{i'}^a]m_{\dot\S}D^A_{E'}
    \end{align*}
    where the last equality follows because
    $m_{\dot\S}T_w=q^{\ell(w)}m_{\dot\S}$ for all
    $w\in\Sym_{C'\cup\{c'\}}$ by Lemma~\ref{permutation}. We have already
    observed that $\D_\S=\D^A_{E'}$, so an application of (\ref{m_S
    expansion}) now completes the proof.
\end{proof}

\begin{Example}
Suppose that $a=4$, $b=9$, $c=8$ and that $g=3$. Then
\[\t={\tab(1345,267,89)} \quad\implies 
\quad \s={\tab(1458,267,39)}, \quad\T={\tab(1344,244,44)}
           \quad\text{and}\quad  \S={\tab(1444,244,34)}.\]
Abusing notation and identifying $m_\s$ with $\s$ and $\S$ with $m_\S$, we
have
\[{\tab(1458,267,39)} \;(1+T_8) D_\T(4)=q^2 [3]\,{\tab(1444,244,34)}\] 
where $D_\T(4)= \sum_{w\in\D_\T} T_w$. By definition, $\D_\T(4)=\D_\T$ is
the set of minimal length coset representatives of 
$\Sym_{\{4,5\}}\times \Sym_{\{6,7\}} \times \Sym_{\{8,9\}}$ in 
$\Sym_{\{4,\ldots,9\}}$.  
\end{Example}

For any composition
$\sigma=(\sigma_1,\sigma_2,\dots)$ let
$\bar\sigma_{k}=\sigma_1+\dots+\sigma_k$, for $k\ge0$.

\begin{lemma} \label{initialperm}
Suppose that $\eta\subseteq\nu$ is a partition of $n$ and set
$\xi=(\nu_1-\eta_1,\nu_2-\eta_2,\dots)$, a composition of~$\gamma$.  Then
\[
m_{\tnu_\eta} = m_{\tnu} \prod_{i=0}^{z-1} 
\prod_{k=0}^{\xi_i-1} T_{\bar\nu_{z-i}-k,n+\bar\xi_{z-i}-k}.\] 
\end{lemma}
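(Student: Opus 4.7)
The plan is to multiply $m_{\tnu}$ by the displayed product one simple transposition at a time, showing at every step that Lemma~\ref{permutation}(a) applies and so contributes no factor of~$q$. Expanding each inner factor as a chain $T_{a,b}=T_aT_{a+1}\cdots T_{b-1}$ (with $a=\bar\nu_{z-i}-k$ and $b=n+\bar\xi_{z-i}-k$), the whole product becomes a concrete sequence of simple transpositions whose total length equals $\ell(d(\tnu_\eta))$; granted that every initial segment of this sequence produces a standard tableau via case~(a) of Lemma~\ref{permutation}, the product is a reduced expression for $T_{d(\tnu_\eta)}$ and iterated application of that case of the lemma yields $m_{\tnu_\eta}$ with coefficient exactly~$1$.

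I would proceed by a double induction: an outer induction on $i=0,1,\dots,z-1$ that treats the rows of $\nu$ from the bottom up (so row $r=z-i$ is processed on step $i$), and an inner induction that advances through the successive values of $k$. The inductive hypothesis gives an explicit description of the intermediate standard tableau $\s$ just before the next transposition is applied: rows $r+1,\dots,z$ and the first $\nu_r-k$ cells of row~$r$ have been filled with the entries that $\tnu_\eta$ demands there together with the already-moved entries, while the entries $a,a+1,\dots,b$ of the next chain lie along a strictly downward path starting at a cell of row~$r$ and descending into the rows below. Under this hypothesis each simple transposition $T_j$ in the chain swaps an entry $j$ that is strictly above the entry $j+1$, so Lemma~\ref{permutation}(a) replaces $m_\s$ by $m_{\s(j,j+1)}$ without any $q$ factor; one then verifies that the hypothesis is restored for the next step, both within the chain and as one passes to the next factor of the product.

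The main obstacle is this combinatorial bookkeeping: one must describe the intermediate tableau at each stage and verify that the entries along the next chain really do form a strictly descending path. The key structural fact driving the verification is that after processing row~$r$ its last $\xi_r$ entries are exactly $n+\bar\xi_{r-1}+1,\dots,n+\bar\xi_r$, matching the row-order filling rule used to define $\tnu_\eta$, while the first $\eta_r$ entries agree with those of $\t^\eta$; consequently, when processing begins on row $r-1$, the configuration of entries $\bar\nu_{r-1},\bar\nu_{r-1}+1,\dots,n+\bar\xi_{r-1}$ is exactly what the next chain requires. Once the final tableau has been identified with $\tnu_\eta$, iterating Lemma~\ref{permutation}(a) along the reduced word gives the displayed identity. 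I expect the bookkeeping to be the only delicate part of the argument; everything else is a direct application of Lemma~\ref{permutation}.
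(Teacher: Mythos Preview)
Your proposal is correct and follows essentially the same strategy as the paper. The paper's proof defines intermediate tableaux $\t(j)$ in which the entries $n+j+1,\dots,n+\gamma$ already occupy their final positions while $1,\dots,n+j$ are in row order, shows the passage $\t(\gamma)=\tnu\rightsquigarrow\t(\gamma-1)$ explicitly via the chain $T_{\bar\nu_r,n+\gamma}$ and Lemma~\ref{permutation}, and then invokes downward induction on~$j$. Your double induction on $(i,k)$ simply reparametrises this same sequence of moves by row and position within the row rather than by the index~$j$; since the entries $n+1,\dots,n+\gamma$ are placed in row order in $\tnu_\eta$, the two indexings are in obvious bijection and the intermediate tableaux coincide.
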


\begin{proof}
For $0\le j\le \gamma$, let $\t(j)$ be the $\nu$-tableau such that the entries $n+j+1,\ldots,n+\gamma$ appear in the same position that they appear in $\tnu_\eta$ and the entries $1,2,\ldots,n+j$ are in row order. 
Consider $\t(\gamma-1)$.  Suppose that $n+\gamma$ appears (at the end of) row $r$ in $\tnu_\eta$.  Then $m_{\t(\gamma-1)} = m_{\tnu}T_{\bar{\nu_r}}\ldots T_{n+\gamma-1}=m_{\tnu}T_{\bar{\nu_r},n+\gamma}$ by Lemma~\ref{permutation}.
The general case now follows by downwards induction on $j$ using essentially the same observations.   
\end{proof}

Similarly, it is straightforward to check the following lemma.

\begin{lemma} \label{ArbPerm}
    Suppose $\t \in \rowstd(\nu)$ and let $\eta=\Shape(\t_{\downarrow n})$.  Then
\[ m_\t = m_{\tnu} T_{d(\tnu_\eta)}T_w\]
for a unique permutation $w \in \sym \times \sym[\gamma]$.  
\end{lemma}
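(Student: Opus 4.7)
The plan is to construct $w$ by directly comparing $\t$ with $\tnu_\eta$, and then to upgrade the resulting tableau identity to the Hecke algebra identity $m_\t=m_{\tnu_\eta}T_w$ via a length-additivity check in $\sym[n+\gamma]$.

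Since $\eta=\Shape(\t_{\downarrow n})$, both $\t$ and $\tnu_\eta$ are row-standard $\nu$-tableaux whose entries $\le n$ fill exactly $\Diag(\eta)$ and whose entries $>n$ fill exactly $\Diag(\nu)\setminus\Diag(\eta)$. So there is a permutation of $\{1,\ldots,n+\gamma\}$ taking $\tnu_\eta$ to $\t$ under the tableau action, and any such permutation preserves the decomposition $\{1,\ldots,n\}\sqcup\{n+1,\ldots,n+\gamma\}$, hence lies in $\sym\times\sym[\gamma]$. I shall take $w$ to be the unique minimal-length element of $\sym\times\sym[\gamma]$ with $\tnu_\eta\cdot w=\t$; it factors uniquely as $w=w_1w_2$ with commuting $w_1\in\sym$ and $w_2\in\sym[\gamma]$, and minimality identifies $w_1=d(\t_{\downarrow n})$ and similarly recognises $w_2$ as the analogous distinguished element governing the arrangement of $n+1,\ldots,n+\gamma$ on the skew shape $\nu/\eta$.

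The heart of the proof will be the length identity $\ell(d(\t))=\ell(d(\tnu_\eta))+\ell(w)$. Using the standard formula
\[\ell(d(\s))=\#\set{(i,j)|1\le i<j\le n+\gamma,\ \row_\s(i)>\row_\s(j)}\]
valid for any row-standard $\nu$-tableau $\s$, I would split the inversions of $\t$ and of $\tnu_\eta$ into three types according to whether both entries of the pair lie in $\{1,\ldots,n\}$, both in $\{n+1,\ldots,n+\gamma\}$, or in different blocks. The two ``internal'' contributions vanish for $\tnu_\eta$, because $\t^\eta$ is the initial $\eta$-tableau and the entries $n+1,\ldots,n+\gamma$ sit in row order in $\tnu_\eta$; for $\t$ they equal $\ell(w_1)$ and $\ell(w_2)$ respectively. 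The ``mixed'' contribution $\sum_{r'>r}\eta_{r'}(\nu_r-\eta_r)$ depends only on the pair $\eta\subseteq\nu$ and not on the individual tableau, so it agrees for $\t$ and $\tnu_\eta$. Summing the three contributions gives the length identity.

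Length-additivity forces $\ell(d(\tnu_\eta)\cdot w)=\ell(d(\tnu_\eta))+\ell(w)=\ell(d(\t))$, and since $d(\tnu_\eta)\cdot w$ lies in the same $\sym[\nu]$-coset as $d(\t)$ (both send $\t^\nu$ to $\t$ under the tableau action), this matching of length forces $d(\tnu_\eta)\cdot w=d(\t)$ in $\sym[n+\gamma]$. Hence $T_{d(\t)}=T_{d(\tnu_\eta)}T_w$ in $\h[n+\gamma]^\calZ$, and left-multiplication by $m_{\t^\nu}$ gives the claimed equation. Uniqueness of $w$ is built into the minimal-length prescription, which pins $w$ down as $d(\tnu_\eta)^{-1}d(\t)$. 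The main obstacle is the inversion-counting argument of the third paragraph; the rest is a formal consequence of the definitions and of the braid and quadratic relations in $\h[n+\gamma]^\calZ$.
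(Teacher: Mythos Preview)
Your argument is correct. The paper does not actually prove this lemma: it simply says ``Similarly, it is straightforward to check the following lemma,'' deferring to the reader after the explicit computation in Lemma~\ref{initialperm}. Your inversion-counting argument is a clean and self-contained way to supply those details: the key point, which you isolate correctly, is that the ``mixed'' inversions between $\{1,\dots,n\}$ and $\{n+1,\dots,n+\gamma\}$ depend only on the pair $\eta\subseteq\nu$, so they cancel when comparing $\ell(d(\t))$ with $\ell(d(\tnu_\eta))$, leaving precisely $\ell(w_1)+\ell(w_2)=\ell(w)$. The length additivity then gives $T_{d(\t)}=T_{d(\tnu_\eta)}T_w$ and hence the claim.

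One small remark on uniqueness: the equation $m_\t=m_{\tnu_\eta}T_w$ alone does not in general determine $w$, since right multiplication on $S^\nu$ is not faithful. What is unique is the permutation $w=d(\tnu_\eta)^{-1}d(\t)\in\sym\times\sym[\gamma]$, and this is the sense in which the paper's statement should be read (and the sense in which the lemma is applied in Lemma~\ref{onnj}). You already pin $w$ down in exactly this way, so there is no gap.
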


We are now ready to start proving the main results of this section.
Recall that if $\eta \subseteq \nu$ is a partition of $n$ then the
almost initial tableau $\tnu_\eta$ was defined in
Section~\ref{ProofSection}. If $1\le r\le z$ then define
$c^\eta_r=\eta_r-r$.  

\begin{lemma} \label{onij}
Suppose that $\t=\tnu_\eta$ is an almost initial tableau such that
$\row_{\t}(n+1)\ne z$ and let $j\ge 1$ be maximal such that
$r=\row_{\t}(n+j)<z$.  For $i\ge 1$ set $\xi_i=\nu_i-\eta_i$ and if
$1\le g\le n$ then let $c(g) = c^\eta_m$ where $\row_\t(g)=m$.  Then 
\[m_{\t}L_{n+j} =  [\res_{\t}(n+j)] m_{\t} 
        + q^{\xi_r-1} \sum_{g=\bar\nu_r-j+1}^n q^{c(g)}m_{\t(g,n+j)}.\]
\end{lemma}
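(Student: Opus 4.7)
The approach is to compute $m_\t L_{n+j}$ directly by factoring $m_\t$ via Lemma~\ref{initialperm}, isolating the sub-product that is responsible for placing $n+j$ at the cell $(r,\nu_r)$, and then invoking the commutation identity in Lemma~\ref{Murphyformulae}(g). Because $j$ is maximal with $\row_\t(n+j)<z$, the entry $n+j$ sits at $(r,\nu_r)$, the entries $n+j+1,\ldots,n+\gamma$ all lie in row $z$, and $\xi_{r'}=0$ for $r<r'<z$. Reading the product of Lemma~\ref{initialperm} from left to right, the row-$z$ factors are trivial (since $\bar\nu_z=n+\gamma$) and rows $r+1,\ldots,z-1$ contribute nothing, so the first non-trivial factor is $T_{\bar\nu_r,n+j}=T_{\bar\nu_r}T_{\bar\nu_r+1}\cdots T_{n+j-1}$. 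Hence we may write $m_\t = m_{\tnu}\,T_{\bar\nu_r,n+j}\,W$, where $W$ is the remaining tail of the product. By inspection $W$ only involves $T_i$ with $i\le n+j-2$, so Lemma~\ref{Murphyformulae}(b) guarantees that $W$ commutes with $L_{n+j}$.

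Applying Lemma~\ref{Murphyformulae}(g) with $i=\bar\nu_r$ and $i'=n+j$ gives
$$T_{\bar\nu_r,n+j}\,L_{n+j}\;=\;L_{\bar\nu_r}\,T_{\bar\nu_r,n+j}\;+\;\sum_{x=\bar\nu_r+1}^{n+j} L'_x\,T_{\bar\nu_r,n+j\setminus x},$$
which splits $m_\t L_{n+j}$ into a main term and a sum of correction terms. Since $\res_{\tnu}(\bar\nu_r)=\nu_r-r=\res_\t(n+j)$, Lemma~\ref{initialmurphy} evaluates the main term as $[\res_\t(n+j)]\,m_\t$. For each correction term, Lemma~\ref{initialmurphy} gives $m_{\tnu}L'_x=q^{\res_{\tnu}(x)}m_{\tnu}$, so the task reduces to analysing $m_{\tnu}\,T_{\bar\nu_r,n+j\setminus x}\,W$ in $S^\nu$ for each $x\in\{\bar\nu_r+1,\ldots,n+j\}$.

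The main obstacle is precisely this final combinatorial analysis: one must show that $T_{\bar\nu_r,n+j\setminus x}\,W$ sends $m_{\tnu}$ either to $m_{\t(g,n+j)}$ with the predicted $q$-coefficient, for a uniquely determined $g\in\{\bar\eta_r+1,\ldots,n\}$, or else to a vanishing contribution. The plan is to track, via repeated application of Lemma~\ref{permutation} together with coset manipulations in the spirit of Lemma~\ref{CosetMix}, how the skip-cycle $T_{\bar\nu_r,n+j\setminus x}$ relocates entries in $\tnu$: values of $x$ that, after the relabelings encoded by $W$, correspond to entries living in rows $r+1,\ldots,z$ of $\tnu_\eta$ yield the surviving terms, while values of $x$ corresponding to entries trapped in row $r$ of $\tnu_\eta$ produce contributions that vanish (since consecutive entries occupy the same row and their swap-terms either collapse to multiples of $m_\t$ absorbed into the main term or cancel in pairs). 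The exponent $\xi_r-1$ counts the same-row crossings that $T_{\bar\nu_r,n+j\setminus x}$ makes within row $r$ (each contributing a factor of $q$ by Lemma~\ref{permutation}), while $q^{c(g)}$ matches $\res_{\tnu}(x)$ under the resulting bijection $x\leftrightarrow g$ once the shift between the $\tnu$-content of $x$ and the $\eta$-content $c(g)$ of the row containing $g$ is accounted for.
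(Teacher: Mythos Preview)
Your first two paragraphs match the paper's proof exactly: factor $m_\t$ via Lemma~\ref{initialperm}, commute $L_{n+j}$ past the tail $W$ (which involves only $T_i$ with $i\le n+j-2$), apply Lemma~\ref{Murphyformulae}(g), and read off the main term using Lemma~\ref{initialmurphy}. That is the right skeleton.

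The gap is in your third paragraph. You anticipate that some values of $x$ produce vanishing contributions (those ``trapped in row $r$'') and that you will need coset manipulations \`a la Lemma~\ref{CosetMix}. Neither is true. The range $\bar\nu_r+1\le x\le n+j$ has exactly $n-\bar\eta_r$ elements, the same as the range $\bar\nu_r-j+1\le g\le n$ in the statement (recall $\bar\xi_r=j$, so $\bar\nu_r-j=\bar\eta_r$). The paper's key observation is that \emph{$x$ occupies the same cell of $\tnu$ that $x-j$ occupies in $\t$}; setting $g=x-j$ gives a clean bijection with no leftover terms, and since $g>\bar\eta_r$ every such $g$ lies in rows $r+1,\dots,z$ of $\t$. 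Your worry about entries in row~$r$ simply does not arise.

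Concretely, for fixed $x$ with $\row_{\tnu}(x)=m$, the paper computes
\[
m_{\tnu}T_{\bar\nu_r,n+j\setminus x}
  = m_{\tnu(x-1,\dots,\bar\nu_r)}\,T_{x,n+j}
  = q^{c(x-j)-\res_{\tnu}(x)}\,m_{\tnu(x-1,\dots,\bar\nu_r)}\,T_{\bar\nu_m,n+j}
  = q^{c(x-j)-\res_{\tnu}(x)}\,m_{\t'},
\]
where the middle power of $q$ comes from pushing the entry across row~$m$ of~$\tnu$ (Lemma~\ref{permutation}), and $\t'$ is the tableau with $n+j$ moved down to the end of row~$m$. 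Multiplying this by the original $q^{\res_{\tnu}(x)}$ gives exactly $q^{c(g)}$ with $g=x-j$. Finally, the remaining $\xi_r-1$ factors $T_{\bar\nu_r-k,\,n+j-k}$ for $k=1,\dots,\xi_r-1$ each pick up a single factor of~$q$ (one same-row crossing apiece, again by Lemma~\ref{permutation}) and a further induction shows they turn $m_{\t'}$ into $m_{\t(g,n+j)}$. No coset arguments and no cancellations are needed; it is a straight-line computation with Lemma~\ref{permutation}.
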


\begin{proof} Using in turn, Lemma~\ref{initialperm}, 
  Lemma~\ref{Murphyformulae}(g) and Lemma \ref{initialmurphy}, we find
\begin{align*}
m_{\t}L_{n+j} &= \Big(m_{\tnu} \prod_{i=0}^{r-1} \prod_{k=0}^{\xi_{r-i}-1}T_{\bar\nu_{r-i}-k,n+\bar\xi_{r-i}-k}\Big) L_{n+j} \\
&= m_{\tnu}T_{\bar\nu_r,n+j}L_{n+j}\Big( \prod_{k=1}^{\xi_r-1}T_{\bar\nu_{r}-k,n+j-k} \Big)
   \Big(\prod_{i=1}^{r-1}\prod_{k=0}^{\xi_{r-i}-1}T_{\bar\nu_{r-i}-k,n+\bar\xi_{r-i}-k}\Big)\\
&=  m_{\tnu} \Big(L_{\bar\nu_r}T_{\bar\nu_r,n+j}+\sum_{x=\bar\nu_r+1}^{n+j} L'_x T_{\bar\nu_r,n+j 
   \setminus x} \Big) \Big(\prod_{k=1}^{\xi_r-1}T_{\bar\nu_{r}-k,n+j-k}\Big)\\
&\hspace*{10mm}\times \Big( \prod_{i=1}^{r-1} 
   \prod_{k=0}^{\xi_{r-i}-1}T_{\bar\nu_{r-i}-k,n+\bar\xi_{r-i}-k}\Big)\\
& = [\res_{\t}(n+j)] m_{\t} 
  +  m_{\tnu}\sum_{x=\bar\nu_r+1}^{n+j} q^{\res_{\tnu}(x)} T_{\bar\nu_r,n+j \setminus x} 
     \Big(\prod_{k=1}^{\xi_r-1}T_{\bar\nu_{r}-k,n+j-k}\Big)\\
& \hspace*{10mm} \times \Big( \prod_{i=1}^{r-1} \prod_{k=0}^{\xi_{r-i}-1}T_{\bar\nu_{r-i}-k,n+\bar\xi_{r-i}-k}\Big).
\end{align*}
Now fix $x$ with $\bar\nu_r+1\le x\le n+j$.  To complete the proof, 
we show that
\begin{align*}
q^{\res_{\tnu}(x)}m_{\tnu} T_{\bar\nu_r,n+j \setminus x}&
  \Big(\prod_{k=1}^{\xi_r-1}T_{\bar\nu_{r}-k,n+j-k}\Big)\Big( \prod_{i=1}^{r-1} 
  \prod_{k=0}^{\xi_{r-i}-1}T_{\bar\nu_{r-i}-k,n+\bar\xi_{r-i}-k}\Big)\\
  &=q^{\xi_r-1}q^{c(x-j)}m_{\t(x-j,n+j)}.
\end{align*}
Note that $x$ lies in the same position of $\tnu$ that $x-j$ lies in~$\t$. Let $\row_{\tnu}(x)=m$.  Therefore
\begin{align*}
m_{\tnu} T_{\bar\nu_r,n+j\setminus x}&= m_{\t^{\nu}(x-1,x-2,\ldots,\bar\nu_r)}T_{x,n+j} \\
&= q^{c(x-j)-\res_{\tnu}(x)} m_{\t^{\nu}(x-1,x-2,\ldots,\bar\nu_r)}T_{\bar\nu_m,n+j} \\ 
&=q^{c(x-j)-\res_{\tnu}(x)} m_{\t'}
\end{align*}
where $\t'=\tnu(x-1,x-2,\ldots,\bar\nu_r)(n+j,n+j-1,\ldots,\bar\nu_m)$.   
Using induction on $\varepsilon$, where $1\le \varepsilon\le \xi_{r}$,
it follows that 
\[m_{\t'}\Big(\prod_{k=1}^{\varepsilon-1} T_{\bar\nu_{r}-k,n+j-k}\Big)
= m_{\t'}\Big(\prod_{k=1}^{\varepsilon-1}q T_{\bar\nu_{r}-k,n+j-k\setminus x-k}\Big). \]
Applying a second inductive argument, we find  
\[m_{\t'}\Big(\prod_{k=1}^{\xi_r-1} T_{\bar\nu_{r}-k,n+j-k\setminus x-k}\Big)\Big( \prod_{i=1}^{r-1} \prod_{k=0}^{\xi_{r-i}-1}T_{\bar\nu_{r-i}-k,n+\bar\xi_{r-i}-k}\Big)= m_{\t(x-j,n+j)}.\]
The result follows. 
\end{proof}

Suppose $1\le u\le v\le n$ and that $\pi \in\sym$. 
Let $\mathcal{D}(u,v,\pi)$ be the set of tuples ${\bf p}=(p_0,p_1,\ldots,p_\epsilon)$ such that $u-1 =p_0< p_1<p_2<\ldots<p_\epsilon =v$ and $(p_1)\pi > (p_2)\pi > \ldots > (p_\epsilon)\pi$.    
For each ${\bf p} \in \mathcal{D}(u,v,\pi)$ let $\check{\bf p}$ be the permutation $(p_1,p_1-1,\ldots,p_0+1)(p_2,p_2-1,\ldots,p_1+1)\ldots(p_\epsilon,p_\epsilon-1,\ldots,p_{\epsilon-1}+1)$. Let $\ell({\bf p})=\epsilon-1$ and 
\[b({\bf p}) = \sum_{i=0}^{\epsilon-1} \#\set{j| p_i < j < p_{i+1} \text{ and } (j)\pi > (p_{i+1})\pi}.\]

\begin{lemma} \label{CyclePerm}
Suppose $1\le u\le v\le n$ and that $\pi \in\sym$.  Then
\[ T_{u,v} T_\pi = \sum_{{\bf p} \in \mathcal{D}(u,v,\pi)} q^{b({\bf p})} (q-1)^{\ell({\bf p})} T_{\check{\bf p} \pi}.\]
\end{lemma}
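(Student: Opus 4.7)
The plan is to proceed by induction on $v-u$. The base case $v=u$ reduces to the tautology $T_\pi=T_\pi$, since $T_{u,u}=1$ and the only tuple in $\mathcal{D}(u,u,\pi)$ is $(u-1,u)$, for which $\check{\bf p}$ is the identity and $b({\bf p})=\ell({\bf p})=0$.

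For the inductive step I factor $T_{u,v}=T_u\,T_{u+1,v}$ and apply the inductive hypothesis to $T_{u+1,v}T_\pi$ to obtain
\[T_{u,v}T_\pi=\sum_{{\bf p}'\in\mathcal{D}(u+1,v,\pi)}q^{b({\bf p}')}(q-1)^{\ell({\bf p}')}\,T_u\,T_{\check{\bf p}'\pi}.\]
The pivotal observation is that for any ${\bf p}'=(u,p_1',\dots,p_{\epsilon}')\in\mathcal{D}(u+1,v,\pi)$ the permutation $\check{\bf p}'$ fixes $u$ and sends $u+1\mapsto p_1'$, so if $\sigma=\check{\bf p}'\pi$ then $(u)\sigma=(u)\pi$ and $(u+1)\sigma=(p_1')\pi$. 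The quadratic Hecke relation therefore gives $T_uT_\sigma=T_{s_u\sigma}$ when $(u)\pi<(p_1')\pi$ and $T_uT_\sigma=(q-1)T_\sigma+qT_{s_u\sigma}$ when $(u)\pi>(p_1')\pi$.

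To finish I partition $\mathcal{D}(u,v,\pi)$ into tuples with $p_1=u$ (``Type~A'') and tuples with $p_1>u$ (``Type~B''). A short calculation shows that ${\bf p}_B=(u-1,p_1',\dots,p_{\epsilon}')\in\mathcal{D}(u,v,\pi)$ always lies in Type~B and satisfies $\check{\bf p}_B=s_u\check{\bf p}'$, while ${\bf p}_A=(u-1,u,p_1',\dots,p_{\epsilon}')$ lies in $\mathcal{D}(u,v,\pi)$ (as a Type~A tuple) precisely when $(u)\pi>(p_1')\pi$ and satisfies $\check{\bf p}_A=\check{\bf p}'$, since the prepended $1$-cycle $(u)$ is trivial. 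As only the first block of $\check{\bf p}$ is affected, the statistics update cleanly: $\ell({\bf p}_B)=\ell({\bf p}')$, $\ell({\bf p}_A)=\ell({\bf p}')+1$, $b({\bf p}_A)=b({\bf p}')$, and $b({\bf p}_B)$ equals $b({\bf p}')$ or $b({\bf p}')+1$ according to whether $(u)\pi<(p_1')\pi$ or $(u)\pi>(p_1')\pi$. These match precisely the two outputs of the quadratic relation, and the induction closes. The main obstacle is this combinatorial bookkeeping: one must verify that the correspondence exhausts $\mathcal{D}(u,v,\pi)$ and that the $q$- and $(q-1)$-exponents agree on the nose.
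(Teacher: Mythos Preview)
Your proof is correct and follows essentially the same approach as the paper: induction on $v-u$, factoring $T_{u,v}=T_uT_{u+1,v}$, and then splitting each ${\bf p}'\in\mathcal{D}(u+1,v,\pi)$ into the tuples ${\bf p}_B=(u-1,p_1',\dots,p_\epsilon')$ and ${\bf p}_A=(u-1,u,p_1',\dots,p_\epsilon')$ according to the quadratic relation. The paper's argument is simply a terser version of yours, omitting the verification of the statistics $b$ and $\ell$ that you spell out.
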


\begin{proof}
We use induction on $v-u$, the case $u=v$ being trivial.  Assume $v-u\ge 1$ and that the lemma holds for $v-u-1$.  By induction,
\[T_{u,v} T_\pi = \sum_{{\bf p} \in \mathcal{D}(u+1,v,\pi)} q^{b({\bf p})} (q-1)^{\ell({\bf p})} T_u T_{\check{\bf p} \pi}.\]
If ${\bf p} =(p_0,p_1,\ldots,p_\epsilon) \in \mathcal{D}(u+1,v,\pi)$ then
\[T_u T_{\check{\bf p} \pi} = \begin{cases}
T_{\check{\bf p'}\pi}, & (u)\pi < (p_1)\pi, \\
q T_{\check{\bf p'}\pi} + (q-1)T_{\check{\bf p''}\pi}, & (u)\pi > (p_1)\pi, 
\end{cases}\]
where ${\bf p'}=(u-1,p_1,\ldots,p_\epsilon)$ and ${\bf p''}=(u-1,p_0,p_1,\ldots,p_\epsilon)$.
The result follows.  
\end{proof}

\subsection{Bumping tableaux} In this section we prove a series of
`bumping lemmas' which culminate in the proof of
Proposition~\ref{Image}. This result contains
Proposition~\ref{ImageDistinct} as a special case, so it completes the
proof of Theorem~\ref{CPDistinctZero}.  Throughout this section,
$\nu$ is an arbitrary partition of $n+\gamma$.  

Suppose that $\t \in \rowstd(\nu)$.  Suppose that $1\le j\le n+\gamma$
and that $\row_\t(j)=r$.  Say that $\s$ is obtained from $\t$ by
\textbf{bumping $j$} down $\t$ if there exists $\epsilon\ge 1$ and
integers $r=r_0<r_1<\ldots<r_{\epsilon}\le z$ and
$j>d_1>\ldots>d_{\epsilon}\ge 1$ such that $\row_{\t}(d_i)=r_i$ for
$1\le i\le \epsilon$ and $\s=\t(j,d_1,\ldots,d_\epsilon)$.  If $\s$ is
such a tableau, write $\s \prec_{j} \t$.  Define
$\ell_{\t}(\s)=\epsilon-1$ and 
\begin{align*}
\bump_\t(\s)&= c^\eta_{r_\epsilon}-\epsilon +\sum_{i=0}^{\epsilon-1} 
         \#\set{j| r_i\le \row_\t(j) < r_{i+1} \text{ and } j>d_{i+1}}\\
      &= c^\eta_{r_\epsilon}-\epsilon +\sum_{i=0}^{\epsilon-1}
                  \s_{[r_i,r_{i+1})}^{>d_{i+1}}.
\end{align*}
The notation $\s_{[r_i,r_{i+1})}^{>d_{i+1}}$ was introduced in Section~\ref{combinatorics}.

\begin{lemma} \label{onnj}
    Suppose $\t\in\rowstd(\nu)$ is such that 
    $\eta=\Shape(\t_{\downarrow n})\ne\mu$ and the 
entries $n+1,n+2,\ldots,n+\gamma$ are in row order.  
Choose $j$ maximal such that $r=\row_{\t}(n+j)<z$.  Then 
\[m_\t (L_{n+j}-[c_r]) = \sum_{\s \prec_{n+j} \t} q^{\bump_\t(\s)} (q-1)^{\ell_\t(\s)} m_\s.\]
\end{lemma}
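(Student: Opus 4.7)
My plan is to deduce Lemma~\ref{onnj} from Lemma~\ref{onij} (the almost-initial case) by exploiting the commutation of $L_{n+j}$ with permutations of $\{1,\ldots,n\}$.

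Since $n+1,\ldots,n+\gamma$ sit in the same skew shape $\nu/\eta$ in $\t$ as in $\tnu_\eta$ (both filled by row order), we may write $\t = \tnu_\eta\cdot\pi$ for a unique $\pi\in\Sym_n\subset\Sym_{n+\gamma}$, chosen as the distinguished representative of $\Sym_\eta\pi$, so that $m_\t = m_{\tnu_\eta}T_\pi$ in $S^\nu$. The maximality of $j$ and the row-order condition place $n+j$ at $(r,\nu_r)$, giving $\res_\t(n+j)=c_r$. As $T_\pi$ involves only $T_i$ for $i<n$, Lemma~\ref{Murphyformulae}(b) yields $T_\pi L_{n+j}=L_{n+j}T_\pi$; combining this with Lemma~\ref{onij} (and the identity $\bar\nu_r-j+1=\bar\eta_r+1$) gives
\[
m_\t(L_{n+j}-[c_r]) = q^{\xi_r-1}\sum_{g=\bar\eta_r+1}^{n} q^{c(g)}\,m_{\tnu_\eta(g,n+j)}\,T_\pi.
\]

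To expand each $m_{\tnu_\eta(g,n+j)}T_\pi$ into the bumping basis I would induct on $\ell(\pi)$, writing a reduced expression $\pi=s_{i_1}\cdots s_{i_l}$ and applying Lemma~\ref{permutation} one reflection at a time. Cases~(a)--(b) of that lemma produce a single new tableau and preserve $\epsilon$, while case~(c) (``$i$ below $i+1$'') produces $qm_{\s s_i}+(q-1)m_\s$; the $(q-1)m_\s$ correction is precisely what extends the bumping cycle by one more swap $d_{\epsilon+1}$ and accumulates the factor $(q-1)^{\ell_\t(\s)}=(q-1)^{\epsilon-1}$. The base case $\pi=1$ is Lemma~\ref{onij} itself: in $\tnu_\eta$, the conditions $r_1<\cdots<r_\epsilon$ and $d_1>\cdots>d_\epsilon$ force $\epsilon=1$ (entries in lower rows of the $\eta$-part are larger), and a direct count verifies that the exponent $\xi_r-1+c(g)$ equals $\bump_\t(\s)$ for the single swap $\s=\tnu_\eta(g,n+j)$, using the telescoping $\bar\xi_{r_1-1}-\bar\xi_{r-1}=\xi_r+\cdots+\xi_{r_1-1}$.

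The main obstacle will be the combinatorial bookkeeping of $q$-exponents: showing that after iterating Lemma~\ref{permutation} the accumulated weight equals
\[
\bump_\t(\s)=c^\eta_{r_\epsilon}-\epsilon+\sum_{i=0}^{\epsilon-1}\#\{k:r_i\le\row_\t(k)<r_{i+1},\ k>d_{i+1}\}.
\]
I would handle this either by a direct induction on $\epsilon$, matching each case~(c) reflection at level $i$ to a contribution of $q^{\#\{k:\row_\t(k)=r_i,\,k>d_{i+1}\}}$ to the bump along with the $-1$ from decrementing $\epsilon$; or, more cleanly, by invoking Lemma~\ref{CyclePerm} applied to $T_{\bar\nu_r,n+j}T_\pi$, where the tuples $\mathbf{p}\in\mathcal{D}(\bar\nu_r,n+j,\pi)$ correspond bijectively to the bumping sequences $(r_0,\ldots,r_\epsilon;d_1,\ldots,d_\epsilon)$, so that the weights $q^{b(\mathbf{p})}(q-1)^{\ell(\mathbf{p})}$ match $q^{\bump_\t(\s)}(q-1)^{\ell_\t(\s)}$ once the content shifts coming from the factor $q^{\xi_r-1+c(g)}$ are absorbed.
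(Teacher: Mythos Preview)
Your approach is essentially the paper's: write $m_\t=m_{\tnu_\eta}T_\pi$ with $\pi\in\Sym_n$, commute $L_{n+j}$ past $T_\pi$, apply Lemma~\ref{onij}, and then expand via Lemma~\ref{CyclePerm}. Your method~(a) would work in principle but is not what the paper does; method~(b) is the paper's route, though your description is slightly off. One does not apply Lemma~\ref{CyclePerm} to $T_{\bar\nu_r,n+j}T_\pi$: that product does not arise after Lemma~\ref{onij} has been applied, and in any case $n+j>n$ lies outside the lemma's hypotheses. Instead, the paper first rewrites each summand as $m_{\tnu_\eta(g,n+j)}=m_{\tnu_{\sigma_g}}T_{V,g}$, where $V=\bar\nu_r-j+1=\bar\eta_r+1$ and $\sigma_g=\Shape\big(\tnu_\eta(g,n+j)_{\downarrow n}\big)$, and then applies Lemma~\ref{CyclePerm} to $T_{V,g}T_\pi$ for each $g$ with $V\le g\le n$. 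The bijection you anticipate is between pairs $(g,\mathbf p)$ with $\mathbf p=(p_0,\dots,p_\epsilon)\in\mathcal D(V,g,\pi)$ and bumping tableaux $\s\prec_{n+j}\t$, given by $d_i=(p_i)\pi$ for $1\le i<\epsilon$ and $d_\epsilon=(g)\pi$; under this correspondence $m_{\tnu_{\sigma_g}}T_{\check{\mathbf p}\pi}=m_\s$, and one checks directly that $q^{\xi_r-1+c(g)+b(\mathbf p)}(q-1)^{\ell(\mathbf p)}=q^{\bump_\t(\s)}(q-1)^{\ell_\t(\s)}$.
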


\begin{proof}
Following Lemma \ref{ArbPerm}, let $\pi$ be the permutation such that $m_\t
= m_{\tnu_\eta} T_\pi$.  Since $\pi \in \sym$ we have that 
$m_\t L_{n+j} = m_{\tnu_\eta}L_{n+j} T_\pi$.  We apply Lemma \ref{onij}, 
keeping the notation of that lemma, except that we set $V=\bar\nu_r-j+1$.
For $V\le g\le n$, let $\sigma_g=\Shape(\t(g,n+j)_{\downarrow n})$.  Then 
\begin{align*} 
m_{\t}(L_{n+j}-[c_r]) & = m_{\tnu_\eta} (L_{n+j}-[c_r]) T_\pi \\
&=q^{\xi_r-1} \sum_{g=V}^n q^{c(g)}m_{\t(g,n+j)} T_\pi \\
& = q^{\xi_r-1} \sum_{g=V}^n q^{c(g)} m_{\tnu_{\sigma_g}} T_{V,g} T_\pi \\
&= q^{\xi_r-1} \sum_{g=V}^n q^{c(g)} \sum_{{\bf p} \in \mathcal{D}(V,g,\pi)} q^{b({\bf p})} (q-1)^{\ell({\bf p})} m_{\tnu_{\sigma_g}} T_{\check{\bf p} \pi} 
\end{align*}
by Lemma \ref{CyclePerm}. 
Now notice that there is a bijection 
\[ \set{\s | \s \prec_{n+j} \t} \overset\sim\longleftrightarrow  
   \set{(g,{\bf p})|V\le g\le n\text{ and }{\bf p}\in\mathcal{D}(V,g,\pi)}
\]
given as follows.  For each pair $(g,{\bf p})$ as above, let ${\bf d} =
(d_1,\ldots,d_\epsilon)$ where $d_i = (p_i)\pi$ for $1\le i < \epsilon$
and $d_\epsilon = (g) \pi$.  By construction, $n+j>d_1>\ldots>d_\epsilon$
and if $1\le i<j\le \epsilon$ then $(p_i)\pi > (p_j)\pi$ and so
$\row_{\t}(i)>\row_{\t}(j)$.  Thus $\s = \t(n+j,d_1,\ldots,d_\epsilon)$ is
formed by bumping $n+j$ down $\t$.  Under this correspondence, since
$\check{\bf p} \pi \in \sym$, in order to see that \[m_{\tnu_{\sigma_g}}
T_{\check{\bf p} \pi} = m_{\t(n+j,d_1,\ldots,d_\epsilon)}\] it is enough to
observe that the permutations $d(\tnu_{\sigma_g}) \check{\bf p} \pi$ and
$(n+j,d_1,\ldots,d_\epsilon)$ agree.  It remains to check that 
\[q^{\xi_r-1}q^{c(g)}q^{b({\bf p})}(q-1)^{\ell({\bf p})} 
           = q^{\bump_\t(\s)}(q-1)^{\ell_\t(s)},\]
which again follows from the definitions.      
\end{proof}

Now suppose that $\T$ is a $\nu$-tableau of arbitrary type which
contains an entry equal to~$k$ in row~$r$.  We generalize the notion
of bumping by saying that a tableau $\U$ is obtained from~$\T$ by
\textbf{bumping $k$ from row $r$} if there exist an integer
$\epsilon\ge 1$ and integers $r=r_0<r_1<\ldots<r_\epsilon\le z$ and
$k>d_1>\ldots>d_\epsilon$ such that for $1\le i\le \epsilon$, row
$r_i$ of $\T$ contains an entry equal to~$d_i$ and~$\U$ is obtained by
repeatedly exchanging $k$ in row $r_i$ with $d_{i+1}$ in row
$r_{i+1}$.  If $\U$ is obtained from $\T$ in this way, write $\U
\prec_{k,r} \T$. We suppress $r$ if $\T$ contains only one entry equal
to $k$.  Define $\ell_{\T}(\U)=\epsilon-1$,
$f_\T^\U=\prod_{i=0}^{\ell_{\T}(\U)}[\U^{d_{i+1}}_{r_i}]$ and
\[\bump_\T(\U) = c^\eta_{r_\epsilon}+\sum_{i=0}^{\epsilon-1}
\Big(\U^{>d_{i+1}}_{r_i} + \U^{\ge d_{i+1}}_{(r_i,r_{i+1})}\Big).\]
This agrees with the previous definition of $\bump_\T(\U)$
when $\T$ is a tableau of type $(1^{n+\gamma})$.  

Define a $\nu$-tableau $\T$ to be \textbf{basic} if it is a
semistandard tableau of type $\eta+1^\gamma$ for some partition $\eta$
of $n$ such that $\eta \subseteq \nu$ and the entries
$z+1,z+2,\ldots,z+\gamma$ are in row order.  Note that for $1\le j\le
\gamma$, the position of~$z+j$ in $\T$ is the same as the position of
$n+j$ in~$\dot\T$.    

\begin{corollary} \label{ONOJ}
  Suppose that $\T$ is a basic tableau of type $\eta+1^\gamma$ such that
  $\eta \neq \mu$.  Let $j$ be maximal such that $r=\row_\T(z+j)<z$.  Then
  $$m_\T\(L_{n+j}-[c_r]\) = 
     \sum_{\U\prec_{n+j}\T} q^{\bump_\T(\U)}(q-1)^{\ell_{\T}(\U)}  
     f_\T^\U m_\U.$$
\end{corollary}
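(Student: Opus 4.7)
The strategy is to lift Lemma~\ref{onnj}, which concerns row-standard tableaux, to the basic tableau $\T$ via the expansion (\ref{m_S expansion}). Write $m_\T = m_{\dot\T} D_\T$. Because $\T$ has type $\eta+1^\gamma$, the Young subgroup $\Sym_{\eta+1^\gamma} = \Sym_\eta$ acts trivially on $\{n+1,\dots,n+\gamma\}$, so $D_\T$ is a polynomial in $T_1,\dots,T_{n-1}$. By Lemma~\ref{Murphyformulae}(b), $L_{n+j}$ commutes with $D_\T$ for every $j\ge 1$, and hence
$$m_\T(L_{n+j}-[c_r]) \;=\; m_{\dot\T}(L_{n+j}-[c_r])\, D_\T.$$

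Since $\T$ is basic, the entries $n+1,\dots,n+\gamma$ are in row order in $\dot\T$, and $j$ is still maximal with $\row_{\dot\T}(n+j)<z$. Applying Lemma~\ref{onnj} to $\dot\T$ therefore gives
$$m_\T(L_{n+j}-[c_r]) \;=\; \sum_{\s \prec_{n+j} \dot\T} q^{\bump_{\dot\T}(\s)}(q-1)^{\ell_{\dot\T}(\s)}\, m_\s D_\T.$$
Each $\s = \dot\T(n+j,d_1,\dots,d_\epsilon)$ in this sum maps under $\beta$ to some $\U \prec_{n+j,r} \T$; conversely, a single semistandard bump $\U$ of $\T$ typically arises from many row-standard bumps of $\dot\T$, corresponding to the freedom to choose \emph{which} equivalent entry of row $r_{i+1}$ plays the role of $d_{i+1}$ at each step.

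The main task is to prove
$$\sum_{\s:\;\beta(\s)=\U} q^{\bump_{\dot\T}(\s)}(q-1)^{\ell_{\dot\T}(\s)}\, m_\s D_\T \;=\; q^{\bump_\T(\U)}(q-1)^{\ell_\T(\U)}\, f_\T^\U\, m_\U.$$
To do this I would iterate Lemma~\ref{CosetMix} once for each bumping step $r_i \to r_{i-1}$. Factor $D_\T = D_\T(1)\cdots D_\T(k)$ into its row pieces; for the bumping step that moves the entry with $\beta$-value $d_{i+1}$ from row $r_{i+1}$ into row $r_i$, Lemma~\ref{CosetMix} (applied with the appropriate choice of $a=d_{i+1}$, $c$, $g$) collapses the sum over placements of $d_{i+1}$ within its new row into the Gaussian integer $[\U^{d_{i+1}}_{r_i}]$, contributing the factor $q^{\U^{d_{i+1}}_{(r_i,r_{i+1})}}$ to the $q$-exponent. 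After handling all $\epsilon$ bumping steps, the surviving row-factors of $D_\T$ reassemble into $D_\U$, producing $m_\U$ after multiplying $m_{\dot\U}$.

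The principal obstacle is the bookkeeping for the exponent of $q$. One must check that the sum of $\bump_{\dot\T}(\s)$ (with a fixed choice of $\s$ in the fibre over $\U$) plus the cumulative $q$-powers from the iterated Lemma~\ref{CosetMix} telescopes to $\bump_\T(\U)$, and that the product of the Gaussian integers $[\U^{d_{i+1}}_{r_i}]$ is exactly $f_\T^\U$. Both identities reduce to counting entries in specific row intervals, using that $\T$ is semistandard (so rows are weakly increasing and columns are strictly increasing) and that the bumped entries $d_1>\dots>d_\epsilon$ lie in strictly increasing rows. A second subtlety is to ensure at each step that the hypotheses of Lemma~\ref{CosetMix} are met ($a,\dots,b$ in row order, $\row(c-1)<\row(c)$), which follows from the basic structure of $\T$ together with the fact that $d_i\le n$ for all $i$ (proved by noting that entries in $\{n+1,\dots,n+j-1\}$ lie in rows $\le r$ whereas the $d_i$ lie in rows $>r$).
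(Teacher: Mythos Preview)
Your approach is correct and is essentially the paper's own argument: the paper writes $m_\T=m_{\dot\T}D_\T$, commutes $L_{n+j}-[c_r]$ past $D_\T$, applies Lemma~\ref{onnj} to $\dot\T$, and then simply says ``now apply Lemma~\ref{CosetMix} and the definitions.''  Your outline of how Lemma~\ref{CosetMix} is iterated---one application per bumping step, with the ``extra'' sum $\sum_{j=c}^{c+l}T_{c,j}$ absorbing the freedom in choosing which equivalent entry to bump and producing the factor $[\U^{d_{i+1}}_{r_i}]$---is exactly what the paper suppresses, and your identification of the $q$-exponent bookkeeping as the main technical point is accurate.  (Incidentally, the paper's assertion that $\dot\T=\tnu_\eta$ only holds for the ``initial'' basic tableau; your version, which invokes Lemma~\ref{onnj} for general $\dot\T$, is what is actually needed when Corollary~\ref{ONOJ} is applied inductively in Lemma~\ref{ONMJ}.)
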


\begin{proof}
Let $\t=\dot\T=\tnu_\eta$, so that $m_\T=m_\t D_\T$ by (\ref{m_S expansion}).
Keeping the notation of Lemma~\ref{onnj} we have
\begin{align*}
m_\T (L_{n+j}-[c_r]) 
  &= m_{\t} (L_{n+j}-[c_r]) D_\T \\
  &= \sum_{\s \prec_{n+j} \t} q^{\bump_\t(\s)} (q-1)^{\ell_\t(\s)} 
          m_\s D_\T.
\end{align*}
Now apply Lemma~\ref{CosetMix} and the definitions.
\end{proof}

\begin{lemma} \label{ONAB}
Suppose that $\T$ is a basic tableau of type $\eta+1^\gamma$ such that $z+j$ lies in 
row~$z$ and that $c\in\Z$.  Then 
$m_\T (L_{n+j}-[c]) = q^c [c_z-c-\gamma+j] m_\T$.
\end{lemma}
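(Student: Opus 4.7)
The plan is to reduce the computation to an action on $\dot\T=\tnu_\eta$ and then prove the resulting scalar identity by a short downward induction on $j$.

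First I would write $m_\T=m_{\dot\T}D_\T$ using (\ref{m_S expansion}), and observe that $D_\T\in\h[n]$: since the parts of $\beta=\eta+1^\gamma$ beyond index $z$ are all equal to $1$, the factors $D_\T(z+1),\dots,D_\T(z+\gamma)$ are trivial, while for $i\le z$ we have $\D_\T(i)\subseteq\Sym_{\eta_i}$ acting on $\{\bar\eta_{i-1}+1,\dots,\bar\eta_i\}\subseteq\{1,\dots,n\}$. Since $L_{n+j}$ commutes with every $T_k$ for $k\le n+j-2$ by Lemma~\ref{Murphyformulae}(b), it commutes with $D_\T$. Combined with the identity $[a]-[b]=q^b[a-b]$, and the fact that $n+j$ sits at column $\nu_z-\gamma+j$ of row $z$ in $\tnu_\eta$, the lemma reduces to the scalar claim
$$m_{\tnu_\eta}\, L_{n+j}=[c_z-\gamma+j]\,m_{\tnu_\eta}.$$
Because the entries $n+1,\dots,n+\gamma$ are placed in row order in $\tnu_\eta$ and $n+j$ lies in the last row $z$, all of $n+j,n+j+1,\dots,n+\gamma$ lie in row $z$ of $\tnu_\eta$, and $\xi_z\ge 1$.

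I would establish this claim by downward induction on $j$ starting from $j=\gamma$. Lemma~\ref{initialperm} expresses $T_{d(\tnu_\eta)}$ as a product of factors of the form $T_{\bar\nu_{z-i}-k,\,n+\bar\xi_{z-i}-k}$. The $i=0$ factors are trivial since $\bar\nu_z=n+\gamma=n+\bar\xi_z$, and for $i\ge 1$ every simple reflection appearing has index at most $n+\bar\xi_{z-1}-1=n+\gamma-\xi_z-1\le n+\gamma-2$ (using $\xi_z\ge 1$). Hence $T_{d(\tnu_\eta)}$ is built from $T_l$ with $l\le n+\gamma-2$ and so commutes with $L_{n+\gamma}$ by Lemma~\ref{Murphyformulae}(b). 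Combining with the extension of Lemma~\ref{initialmurphy} to $k=n+\gamma$ (which gives $m_{\tnu}L_{n+\gamma}=[c_z]m_{\tnu}$) yields $m_{\tnu_\eta}L_{n+\gamma}=[c_z]m_{\tnu_\eta}$, which is the base case.

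For the inductive step from $j+1$ down to $j$, both $n+j$ and $n+j+1$ lie in row $z$ of $\tnu_\eta$, so Lemma~\ref{permutation} gives $m_{\tnu_\eta}T_{n+j}=q\,m_{\tnu_\eta}$. The identity $L_{n+j+1}T_{n+j}=T_{n+j}L_{n+j}+L'_{n+j+1}$ from Lemma~\ref{Murphyformulae}(c), combined with the inductive hypothesis $m_{\tnu_\eta}L_{n+j+1}=[c_z-\gamma+j+1]m_{\tnu_\eta}$ and the identity $(q-1)[k]+1=q^k$ applied to $L'_{n+j+1}=(q-1)L_{n+j+1}+1$, yields
$$q[c_z-\gamma+j+1]\,m_{\tnu_\eta}=q\,m_{\tnu_\eta}L_{n+j}+q^{c_z-\gamma+j+1}m_{\tnu_\eta},$$
and then $[k+1]-q^k=[k]$ gives $m_{\tnu_\eta}L_{n+j}=[c_z-\gamma+j]m_{\tnu_\eta}$, completing the induction.

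The main obstacle is the base case: the key structural input is that the expression for $d(\tnu_\eta)$ in Lemma~\ref{initialperm} contains no factor of $s_{n+\gamma-1}$, i.e.\ the simple reflections used stay bounded by $n+\gamma-2$. Once this commutativity of $T_{d(\tnu_\eta)}$ with $L_{n+\gamma}$ is in place, the remaining ingredients — the reduction via $D_\T$ and the inductive step — are direct manipulations of the Murphy relations.
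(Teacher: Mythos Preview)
Your overall strategy is sound, but there is a genuine error: you conflate $\dot\T$ with $\tnu_\eta$. These tableaux are \emph{not} equal in general. Take $\nu=(4,3,2)$, $\gamma=1$, $\eta=(3,3,2)$ and
\[
\T=\young(1113,222,34).
\]
This is a basic tableau of type $\eta+1^1$ with $z+1=4$ in row $z=3$, so it satisfies the hypotheses of the lemma. Here $\dot\T=\young(1237,456,89)$, whereas $\tnu_\eta=\young(1239,456,78)$. In particular $n+\gamma=9$ lies in row~$3$ of $\dot\T$ but in row~$1$ of $\tnu_\eta$, and $\xi_z=\nu_3-\eta_3=0$. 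So your assertion ``$\xi_z\ge 1$'' fails, $d(\tnu_\eta)=s_4s_5s_6s_7s_8$ \emph{does} involve $s_{n+\gamma-1}$, and the reduction to the scalar claim about $m_{\tnu_\eta}$ is not what you need: after commuting $L_{n+j}$ past $D_\T$ you are left with $m_{\dot\T}L_{n+j}$, not $m_{\tnu_\eta}L_{n+j}$.

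The fix is simple: replace $\tnu_\eta$ by $\dot\T$ throughout. Since $z+j,\dots,z+\gamma$ all lie in row~$z$ of $\T$ (by the hypothesis and row order), the entries $n+j,\dots,n+\gamma$ all lie in row~$z$ of $\dot\T$, so your inductive step goes through unchanged. For the base case, note that $z+\gamma$ is the unique largest entry of the semistandard tableau $\T$ and lies in row~$z$, hence at position $(z,\nu_z)$; thus $n+\gamma$ sits at $(z,\nu_z)$ in $\dot\T$, the same cell it occupies in $\tnu$. Therefore $d(\dot\T)$ fixes $n+\gamma$, so $d(\dot\T)\in\Sym_{n+\gamma-1}$ and $T_{d(\dot\T)}$ commutes with $L_{n+\gamma}$, giving $m_{\dot\T}L_{n+\gamma}=[c_z]m_{\dot\T}$ from Lemma~\ref{initialmurphy}. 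With this correction your argument is complete, and it is an explicit unpacking of what the paper's one-line appeal to ``Lemma~\ref{initialmurphy} and the proof of Lemma~\ref{onnj}'' is pointing at.
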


\begin{proof}
    It follows from Lemma~\ref{initialmurphy} and the proof of 
    Lemma~\ref{onnj} that 
    $m_\T (L_{n+j}-[c]) = ([c_z-\gamma+j]-[c])m_\T = q^c[c_z-c-\gamma+j] m_\T$. 
\end{proof}

Before generalizing the previous results to bumping tableaux we take a
break and prove the following useful Gaussian integer identity.

\begin{lemma} \label{quantumsum}
Suppose that $v\ge r\ge 0$ and that $C_x,U_x\in\Z$, for $1\le x\le v$.  Then
\[ \sum_{x=r+1}^v \Bigg(\prod_{y=1}^{x-1}q^{U_y}[C_y]\Bigg)[U_x]
       \Bigg(\prod_{y=x+1}^{v}[C_y+U_y]\Bigg)+ \prod_{y=r+1}^v q^{U_y}[C_y] 
       = \prod_{y=r+1}^v[C_y+U_y].\] 
\end{lemma}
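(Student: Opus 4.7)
The plan is to reduce the identity to a telescoping sum via the basic Gaussian-integer relation $[C+U]=[U]+q^{U}[C]$. First I would introduce the shorthand
$$P_x=\prod_{y=r+1}^{x-1}q^{U_y}[C_y],\qquad Q_x=\prod_{y=x+1}^{v}[C_y+U_y],$$
with the standard convention that empty products equal $1$ (reading the first product in the statement as $\prod_{y=r+1}^{x-1}$, since otherwise a spurious common factor $\prod_{y=1}^{r}q^{U_y}[C_y]$ would have to appear on the right-hand side). Then $P_{r+1}=Q_v=1$, the left-hand side becomes $\sum_{x=r+1}^{v}P_x[U_x]Q_x+P_{v+1}$, and the right-hand side becomes $Q_r$.

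Next I would rewrite each summand using $[U_x]=[C_x+U_x]-q^{U_x}[C_x]$ together with the immediate identities $[C_x+U_x]Q_x=Q_{x-1}$ and $q^{U_x}[C_x]P_x=P_{x+1}$, obtaining
$$P_x[U_x]Q_x=P_xQ_{x-1}-P_{x+1}Q_x.$$
The sum $\sum_{x=r+1}^{v}(P_xQ_{x-1}-P_{x+1}Q_x)$ telescopes to $P_{r+1}Q_r-P_{v+1}Q_v=Q_r-P_{v+1}$, and adding the correction term $+P_{v+1}$ recovers $Q_r$, which is exactly the right-hand side.

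There is no real obstacle: once the telescoping structure is recognised, the proof is essentially bookkeeping on the boundary indices. An equivalent route, if one prefers, is induction on $v-r$, peeling off the $x=v$ summand and using $[C_v+U_v]=[U_v]+q^{U_v}[C_v]$ to combine it with the correction term $\prod_{y=r+1}^{v}q^{U_y}[C_y]$ and reduce to the same statement with $v$ replaced by $v-1$; the base case $v=r$ is the trivial identity $0+1=1$.
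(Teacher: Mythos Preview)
Your proof is correct. The telescoping argument via $[C_x+U_x]=[U_x]+q^{U_x}[C_x]$ is exactly the content of the paper's proof, which presents the same computation as downwards induction on a parameter $m$ (after first reducing to $r=0$); this is precisely the alternative route you mention in your final paragraph, so the two arguments are essentially identical.
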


\begin{proof}The integer $r$ plays no essential role so we can, and do, assume that
  $r=0$. We claim that for $1\le m\le v$ we have 
\begin{align*}
\sum_{x=m}^{v}\left(\prod_{y=1}^{x-1}q^{U_y}[C_y]\right)&[U_x]\left(\prod_{y=x+1}^{v}[C_y+U_y]\right) + \prod_{y=1}^{v}q^{U_y}[C_y]\\ 
&= \prod_{y=1}^{m-1}q^{U_y}[C_y]\cdot\prod_{y=m}^{v}[C_y+U_y].
\end{align*}
The lemma follows directly from the claim. To prove the claim, we use
downwards induction on $m$.  If $m=v$ then the equation gives
\[\left(\prod_{y=1}^{v-1}q^{U_y}[C_y]\right)[U_{v}]+ \prod_{y=1}^{v}q^{U_y}[C_y] = \left(\prod_{y=1}^{v-1}q^{U_y}[C_y]\right)[C_y+U_y].\]
Now suppose $1\le m<v$ and the claim holds for $m+1$.  Then
\begin{align*}
\sum_{x=m}^{v}&\left(\prod_{y=1}^{x-1}q^{U_y}[C_y]\right)[U_x]\left(\prod_{y=x+1}^{v}[C_y+U_y]\right) + \prod_{y=1}^{v}q^{U_y}[C_y] \\
&= \Bigg(\!\prod_{y=1}^{m-1}q^{U_y}[C_y]\Bigg)[U_m]
\Bigg(\!\prod_{y=m+1}^{v}[C_y+U_y]\Bigg)
{+}\prod_{y=1}^{m}q^{U_y}[C_y]\cdot\!\!\prod_{y=m+1}^v [C_y+U_y]  \\
&= \prod_{y=1}^{m-1}q^{U_y}[C_y]\cdot\prod_{y=m}^{v}[C_y+U_y].
\end{align*}
This completes the proof of the claim and hence the lemma.
\end{proof}

Suppose that $\T$ is a $\nu$-tableau of arbitrary type which contains
an entry equal to $k$ in row~$r$. We say that a tableau $\U$ is
obtained by \textbf{weakly bumping $k$ from row $r$ into row $z$} if
there exist an integer $\epsilon\ge 1$ and integers
$r=r_0<r_1<\ldots<r_\epsilon=z$ and $d_1,d_2,\ldots,d_\epsilon$ such
that for $1\le i\le \epsilon$, we have $k>d_i$ and row $r_i$ of $\T$
contains an entry equal to~$d_i$, and~$\U$ is obtained by repeatedly
exchanging $k$ in row $r_i$ with $d_{i+1}$ in row $r_{i+1}$.  We write
$\U \prec_{k,r}^{\text{w}} \T$. Once again, we suppress $r$ if $\T$
contains only one entry equal to $k$.

\begin{remark}
The differences between bumping $k$ from row $r$ and weakly bumping $k$
from row~$r$ into row~$z$ are that, when $\U \prec_{k,r}^{\text{w}} \T$, we
do not insist that $d_1>d_2>\ldots>d_\epsilon$ but we do insist
that~$r_\epsilon =z$.  
\end{remark}

If $\U\prec_{k,r}^{\text{w}}\T$ then the integers $d_i,r_i$ above are not
necessarily unique.  Nonetheless, there is a unique sequence 
$\a^\U_\T=(a_{r+1},\ldots,a_{z})$; namely, if $r<i\leq z$, define
\begin{equation}\label{a seq}
  a_i = \begin{cases}
             j,& \text{if } \U_i^j =\T_i^j-1 \text{ for some } j, \\
           a_{i+1}, & \text{otherwise}.
        \end{cases}
\end{equation}
(In other words, $\U$ is obtained form $\T$ by moving an entry labeled $k$ from row $r$ to row $z$, then an entry labeled $a_z$ from row $z$ to row $z-1$ and so on, until an entry labeled $a_{r+1}$ is moved from row $r+1$ into row $r$.)
For $r\le i\le z-1$, define 
\[g_\T^\U(i)=\begin{cases}
    [c_z-c_{i}-\gamma+j+\U^{a_{i+1}}_{i}], &\text{if } a_i =a_{i+1}, \\
    [\U^{a^\U_{i+1}}_{i}], &\text{if } a_{i} < a_{i+1} \text{ or } i=r, \\
    q^{c_z-c_{i}-\gamma+j}[\U^{a_{i+1}}_{i}], &\text{if } a_i>a_{i+1}.
\end{cases} \]
Set $g_\T^\U=g_\T^\U(r)\dots g_\T^\U(z-1)$ and if $r \leq x \leq y \leq z$, let
$b_\U^\T(x,y)=\sum_{i=x}^{y-1}\U^{>a_{i+1}}_i$.

\begin{lemma}\label{ONMJ}
Suppose $\T$ is a basic tableau of type $\eta+1^\gamma$ such that $\eta \neq \mu$. Let $j$ be maximal such that $r=\row_\T(z+j)<z$. Then
$$ m_\T\prod_{i=r}^{z-1}\(L_{n+j}-[c_i]\) = q^{c_{r+1}+\dots+c_z-\gamma+j}
    \sum_{\U \prec^{\text{w}}_{z+j}\T} q^{b_\U^\T(r,z)}g_\T^\U m_\U.
$$
\end{lemma}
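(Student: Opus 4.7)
The plan is to induct on $z - r$, the number of factors in the product. For the base case $r = z - 1$, the product is a single factor $(L_{n+j}-[c_{z-1}])$, and the claim follows immediately from Corollary~\ref{ONOJ}: every weak bumping sequence with $r_0 = z-1$ and $r_\epsilon = z$ has $\epsilon = 1$ and so coincides with a strict bumping, the three-case formula for $g_\T^\U$ collapses to its middle case $[\U^{a_z}_{z-1}]$, and the prefactor $q^{c_z - \gamma + j}$ matches the $q^{c^\eta_z}$ produced by Corollary~\ref{ONOJ} since $\T$ being basic forces row $z$ to contain exactly $\eta_z + \gamma - j$ entries.

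For the inductive step, I would peel off the leftmost factor and apply Corollary~\ref{ONOJ} to obtain
$$m_\T(L_{n+j}-[c_r]) = \sum_{\V \prec_{z+j,r}\T}q^{\bump_\T(\V)}(q-1)^{\ell_\T(\V)}f_\T^\V m_\V.$$
For each $\V$ with $z+j$ now in row $r_k$, the remaining factors commute, so I split them as $\prod_{i=r+1}^{r_k-1}(L_{n+j}-[c_i])\cdot\prod_{i=r_k}^{z-1}(L_{n+j}-[c_i])$. The inductive hypothesis applies to the second product (which has strictly fewer factors; the empty case $r_k = z$ is trivial), producing a sum over weak bumpings $\U \prec^{\text{w}}_{z+j,r_k}\V$, and then Lemma~\ref{ONAB} makes the first product act on $m_\U$ as the scalar $\prod_{i=r+1}^{r_k-1}q^{c_i}[c_z-c_i-\gamma+j]$ because $z+j$ sits in row $z$ of $\U$.

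The resulting double sum indexed by pairs $(\V,\U)$ would then be reindexed by a composite weak bumping $\U \prec^{\text{w}}_{z+j,r}\T$ together with a splitting point $k$ that separates the initial strict-bumping part from the subsequent weak-bumping part. For a fixed $\U$ with data $(r_0=r<r_1<\dots<r_\epsilon=z)$ and $(d_1,\dots,d_\epsilon)$, the valid $k$'s are precisely $1,2,\dots,K$, where $K$ is the length of the maximal strictly decreasing prefix of $(d_1,\dots,d_\epsilon)$, since the strict part demands $d_1>\dots>d_k$ while the weak part imposes no monotonicity. Summing over $k$ should collapse via Lemma~\ref{quantumsum} to the single term with coefficient $q^{c_{r+1}+\dots+c_z-\gamma+j}q^{b_\U^\T(r,z)}g_\T^\U$, the three cases in the definition of $g_\T^\U(i)$ corresponding respectively to whether row $i$ is skipped by the bumping path, is visited at a step where the $d$-sequence is increasing (or is the starting row $r$), or is visited at a step where the $d$-sequence is decreasing.

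The main obstacle is the detailed coefficient bookkeeping. The $(q-1)^{k-1}$ factors from iterated strict bumping, the scalars from Lemma~\ref{ONAB}, the inductive prefactor $q^{c_{r_k+1}+\dots+c_z-\gamma+j}$, and the shift between $c^\eta_{r_k}$ (hidden inside $\bump_\T(\V)$) and $c_{r_k}$ must together telescope, via Lemma~\ref{quantumsum} applied with parameters $C_y,U_y$ reading off the local bumping data at each row $r+1,\dots,z-1$, into the clean product formula. Tracking these shifts---in particular the correction $c_i - c^\eta_i$, which is the number of entries among $z+1,\dots,z+j-1$ occupying row $i$ of $\T$---and checking that each of the three cases of $g_\T^\U(i)$ emerges correctly from the summation is the computational heart of the argument.
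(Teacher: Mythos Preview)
Your overall strategy matches the paper's: induct on $z-r$, use Corollary~\ref{ONOJ} for the base case and to peel off the leftmost factor, apply the inductive hypothesis and Lemma~\ref{ONAB} to the remaining factors, and collapse the resulting sum with Lemma~\ref{quantumsum}.

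There is, however, a genuine gap in your reindexing step. You write ``for a fixed $\U$ with data $(r_0=r<r_1<\dots<r_\epsilon=z)$ and $(d_1,\dots,d_\epsilon)$'', but as the paper notes immediately before the lemma, the weak-bumping data $(r_i,d_i)$ for a given $\U$ is \emph{not} unique; only the sequence $\a^\U_\T$ of~(\ref{a seq}) is. Whenever consecutive entries of $\a^\U_\T$ coincide there are several $(r_i,d_i)$-sequences producing the same $\U$, and your parameter $K$ (the length of the maximal strictly decreasing prefix of $(d_1,\dots,d_\epsilon)$) depends on which one you choose. Fixing one choice undercounts the intermediate $\V$'s, while summing over all (data, split) pairs overcounts. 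For example, if $z=r+3$ and $\a^\U_\T=(5,5,5)$ then there are three intermediate tableaux $\V$ (one for each of the rows $r+1,r+2,r+3$), but every $d$-sequence for this $\U$ has $K=1$, and the four (data, split) pairs hit the tableau with $z+j$ in row $r+1$ twice.

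The paper sidesteps this by working with the canonical $\a$-sequence rather than the non-unique $(d_i,r_i)$. It indexes the intermediate $\V$'s by the \emph{row} $x\in\{r+1,\dots,v\}$ in which $z+j$ lands after the strict part, where $v$ is the least index with $a_v<a_{v+1}$; for each such $x$ there is exactly one strict bumping $\V(x)\prec_{z+j}\T$ with $\U\prec^{\mathrm w}_{z+j}\V(x)$. The application of Lemma~\ref{quantumsum} then proceeds via a further downward induction over the jump points $r_1<\dots<r_s=v$ of $\a^\U_\T$ (the rows where $a_{r_\sigma}>a_{r_\sigma+1}$). Your intuition about how the three cases of $g_\T^\U(i)$ arise is on the right track, but the bookkeeping must be organised around $\a^\U_\T$ and the row index $x$, not around a particular choice of $(d_i)$.
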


\begin{proof}
    We use induction on $z-r$ combined with Corollary \ref{ONOJ}.  If
    $r=z-1$ then the result follows from Corollary \ref{ONOJ}.  Now
    suppose that $r<z-1$ and that Lemma \ref{ONMJ} holds for $r<r'\le z$. 
    Let $\calL_{n+j}=\prod_{i=r}^{z-1}\(L_{n+j}-[c_i]\)$. Then by
    Corollary \ref{ONOJ} and induction, it is clear that 
    $m_\T\calL_{n+j}$ is a linear combination of
    terms $m_\U$ where $U \prec_{z+j}^{\text{w}} \T$.

For the remainder of this proof fix a tableau $\U$ such that
$U\prec_{z+j}^{\text{w}}\T$ and let $\a=\a^\U_\T$ be the
sequence defined in (\ref{a seq}) above. Set $a_{z+1}=\infty$ and let 
$v\ge r+1$ be minimal such that $a_{v}<a_{v+1}$.  Define integers 
$r=r_0<r_1<r_2<\ldots<r_s=v$ to be the points at which
$a_{r_\sigma}>a_{r_\sigma+1}$, for $1\le \sigma<s$. Then 
\[a_{r_0+1}=\ldots=a_{r_1}>a_{r_1+1}=\ldots=a_{r_2}>
        \ldots>a_{r_{s-1}+1}=\ldots=a_{r_s},\]
and $a_{r_s}<a_{r_s+1}$. Finally, let 
$R=R_\T^\U=\set{r_\sigma | 1\le\sigma\le s}$.

Suppose that $r+1\le x\le v$.  Then $r_{\epsilon-1}< x\le r_{\epsilon}$
for some $\epsilon=\epsilon(x)$, where $1\le\epsilon\le s$.  Define
integers $r'_0,r'_1,\ldots,r'_\epsilon$ and $d_1,\ldots,d_\epsilon$ by
setting $d_\sigma = a_{r+\sigma}$, for $1\le \sigma\le \epsilon$, and $r'_\sigma=r_\sigma$, for
$0\le \sigma <\epsilon$, and put $r'_{\epsilon}=x$. Now define $\V(x)$ to be the
tableau obtained from~$\U$ by repeatedly exchanging $n+j$ in row $r'_\sigma$
with $d_{\sigma+1}$ in row $r'_{\sigma+1}$.  Then the set of tableaux 
$\set{\V |\U \prec^{\text{w}}_{n+l} \V \prec_{n+l} \T}$ is precisely the
set $\set{\V(x)|r+1\le x\le v}$.  

For this paragraph fix $x$ with $r+1\le x\le v$. For convenience
we set $C_x = c_z-c_{x}-\gamma+j$ and $\U_x = \U^{a_{x+1}}_{x}$. 
Recall that $c^\eta_x=\eta_x-x$, that is, $c^\eta_x=c_x$ for $r+1 \leq x <z$ and $c_z^\eta=c_z-\gamma+j$.  
Then, by
Corollary~\ref{ONOJ}, the coefficient of $m_{\V(x)}$ in 
$m_\T (L_{n+j}-[c_r])$ is 
$$ q^{\bump_\T({\V(x)})}(q-1)^{\epsilon-1}f_\T^{\V(x)}
= q^{c^\eta_{x}+b_r^{x}(\U,\T)}(q-1)^{\epsilon-1} 
        \prod_{\substack{y=r+1\\y \notin R}}^{x-1} q^{\U_y}\cdot
	\prod_{\sigma=1}^{\epsilon-1}[\U_{r_\sigma}].
$$
If $x\ne z$ then, by induction, the coefficient of $m_\U$ in 
$m_{\V(x)}\prod_{i=x}^{z-1}(L_{n+j}-[c_i])$ is 
\[q^{c_{x+1}+\dots+c_z-\gamma+j+b_\U^\T(x,z)}[\U_x]
                \prod_{\tau=x+1}^{z-1}g_\T^\U(\tau).\]
Finally, by Lemma~\ref{ONAB}, 
\[m_{\U} \prod_{i=r+1}^{x-1} (L_{n+j}-[c_i]) 
= q^{c_{r+1}+\dots+c_{x-1}}\prod_{y=r+1}^{x-1} [C_y] m_{\U}.\]

As already noted,
$\set{\V |\U \prec^{\text{w}}_{n+l} \V \prec_{n+l} \T}
    =\set{\V(x) | 1\le x\le v}$.  Assume now that $v \neq z$; the case $v=z$ is similar but contains some technical differences which we leave to the reader.  
Collecting the terms above, the coefficient of
$q^{c_{r+1}+\dots+c_z-\gamma+j+b_\U^\T(r,z)}m_\U$ in~$m_\T\calL_{n+j}$ is
\begin{align*}
\sum_{x=r+1}^v&(q-1)^{\epsilon(x)-1}[\U_x]
     \prod_{\substack{y=r+1\\y\notin R}}^{x-1}q^{\U_y}\cdot
     \prod_{\sigma=1}^{\epsilon(x)-1}[\U_{r_\sigma}]\cdot
     \prod_{\tau=x+1}^{z-1}g_\T^\U(\tau)\cdot
     \prod_{y=r+1}^{x-1}[C_y]\\
  &=\prod_{y=v+1}^{z-1}g_\T^\U(y) \cdot \Bigg\lbrace\sum_{x=r+1}^v \ \ [\U_x]
     \prod_{\substack{y=r+1\\y\notin R}}^{x-1}q^{\U_y}[C_y]\cdot
     \prod_{\sigma=1}^{\epsilon(x)-1}(q^{C_{r_\sigma}}-1)[\U_{r_\sigma}]\cdot
     \!\!\prod_{\tau=x+1}^{v}g_\T^\U(\tau)\Bigg\rbrace,
\end{align*}
where the last equation follows by rearranging the terms using the
identity $(q-1)[C]=q^C-1$, for any $C\in\Z$.
For $1\le x\le v$ set
$$h(x)=[\U_x]\prod_{\substack{y=r+1\\y\notin R}}^{x-1}q^{\U_y}[C_y]\cdot
     \prod_{\sigma=1}^{\epsilon(x)-1}(q^{C_{r_\sigma}}-1)[\U_{r_\sigma}]\cdot
     \prod_{y=x+1}^{v}g_\T^\U(y).$$

To complete the proof of the lemma we need to show that 
$\sum_{x=r+1}^v h(x)=\prod_{x=r}^{v} g_\T^\U(x)$. Hence, it is enough to
establish the following claim and then set $\epsilon=1$:

\begin{claim}
Suppose that $1\le\epsilon\le s$. Then 
\[ \sum_{x=r_{\epsilon-1}+1}^{v} h(x) = 
      \prod_{\substack{y=r+1\\y \notin R}}^{r_{\epsilon-1}}q^{\U_y}[C_y]\cdot
      \prod_{\sigma=1}^{\epsilon -1}(q^{C_{r_\sigma}}-1)[\U_{r_\sigma}]\cdot
      \prod_{\tau=r_{\epsilon-1}+1}^{v}g_\T^\U(\tau)\cdot
\]
\end{claim}

We prove the claim by downwards induction on $\epsilon$.  If $\epsilon=s$  
then $\epsilon(x)=s$, for $x=r_{s-1}+1,\dots, r_s=v$, so
$$\sum_{x=r_{s-1}+1}^{v} h(x) 
=\sum_{x=r_{s-1}+1}^v \ \ [\U_x]
     \prod_{\substack{y=r+1\\y\notin R}}^{x-1}q^{\U_y}[C_y]\cdot
     \prod_{\sigma=1}^{s}(q^{C_{r_\sigma}}-1)[\U_{r_\sigma}]\cdot
     \!\!\prod_{\tau=x+1}^{v}g_\T^\U(\tau).$$
Consulting the definitions reveals that for $r+1 \leq y \leq v$ we have
$$g_\T^\U(y) = \begin{cases}
        [U_y],&\text{if }y=v, \\
        q^{C_y}[\U_y],&\text{if }v \neq y\in R,\\
        [C_y+\U_y],&\text{if }y\notin R.
      \end{cases}$$
Therefore, 
\begin{align*}
\sum_{x=r_{s-1}+1}^{v} h(x) 
  &=[U_v] \cdot \prod_{\substack{y=r+1\\y \notin R}}^{r_{\epsilon-1}}q^{\U_y}[C_y]\cdot
    \prod_{\sigma=1}^{s-1}(q^{C_{r_\sigma}}-1)\Bigg\lbrace
    \prod_{y=r_{s-1}+1}^{v-1}q^{\U_y}[C_y]\\
  &\hspace*{15mm}+\sum_{x=r_{s-1}+1}^{v-1}
     \prod_{y=r_{s-1}+1}^{x-1}q^{\U_y} [C_y]\cdot[\U_x]\cdot
     \prod_{y=x+1}^{v-1}[C_y+\U_y]\Bigg\rbrace \\
  &=[U_v]\cdot\prod_{\substack{y=r+1\\y \notin R}}^{r_{\epsilon-1}}q^{\U_y}[C_y]\cdot
    \prod_{\sigma=1}^{s}(q^{C_{r_\sigma}}-1)\cdot
    \prod_{i=r_{s-1}+1}^{v-1}[C_i+\U_i]
\end{align*}
by Lemma \ref{quantumsum}. This proves the claim when $\epsilon=s$. The
proof of the claim when $\epsilon<s$ follows easily by induction
using a similar argument, so we leave the details to the reader.
\end{proof}

\begin{corollary} \label{CONMJ}
Suppose that $\T$ is a basic tableau and that $j\in[1,\gamma]$ is an integer 
such that either $j=\gamma$ or $\row_\T(z+j+1)=z$. 
Let $r=\row_\T(n+j)$ and fix $y$ with $1\le y\le r$. If $r=z$ then
\[m_\T \prod_{i=y}^{z-1}(L_{n+j}-[c_i]) 
        =q^{c_1+\dots+c_{z-1}} \prod_{i=y}^{z-1}[c_z-c_i-\gamma+j]\, m_\T.\] 
If $r<z$ then
\[m_\T \prod_{i=y}^{z-1}(L_{n+j}-[c_i]) 
= q^{c_{1}+\dots + c_z-c_r+j-\gamma}\prod_{i=y}^{r-1}[c_z-c_i-\gamma+j]
  \sum_{\U\prec^{\text{w}}_{n+j}\T} q^{b_\U^\T(r,z)}g_\T^\U\, m_\U.\]
\end{corollary}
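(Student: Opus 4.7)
The plan is to split the product using the commutativity of the Jucys-Murphy elements (Lemma~\ref{Murphyformulae}(a)):
$$\prod_{i=y}^{z-1}(L_{n+j}-[c_i])
   = \prod_{i=y}^{r-1}(L_{n+j}-[c_i])\cdot\prod_{i=r}^{z-1}(L_{n+j}-[c_i]),$$
and then treat each factor with the tools already developed.

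First consider the case $r=z$, so $z+j$ lies in row $z$ of the basic tableau~$\T$. Here the ``upper'' factor is empty and we only have to iterate Lemma~\ref{ONAB}: applying $(L_{n+j}-[c_i])$ replaces $m_\T$ by $q^{c_i}[c_z-c_i-\gamma+j]m_\T$ without changing the underlying tableau, so $\T$ remains basic with $z+j$ in row $z$. Multiplying these scalars together for $i=y,\dots,z-1$ gives the claimed formula.

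Next suppose $r<z$. Since the hypothesis on $j$ (either $j=\gamma$ or $\row_\T(z+j+1)=z$) together with $r<z$ is precisely the condition under which Lemma~\ref{ONMJ} applies, we obtain
$$ m_\T\prod_{i=r}^{z-1}(L_{n+j}-[c_i])
   = q^{c_{r+1}+\dots+c_z-\gamma+j}
     \sum_{\U\prec^{\text{w}}_{z+j}\T} q^{b_\U^\T(r,z)}g_\T^\U\, m_\U.$$
For every such $\U$ the entry $z+j$ has been weakly bumped into row $z$, so $n+j$ appears in row $z$ of $\dot\U$. The remaining ``lower'' factor $\prod_{i=y}^{r-1}(L_{n+j}-[c_i])$ is then applied to each $m_\U$ using the analogue of Lemma~\ref{ONAB}: by Lemma~\ref{Murphyformulae}(b), $L_{n+j}$ commutes with the coset sum $D_\U$, so $m_\U L_{n+j}=m_{\dot\U}L_{n+j}D_\U$, and Lemma~\ref{initialmurphy} shows that the content of $n+j$ in $\dot\U$ is $c_z-\gamma+j$, exactly as in the basic case. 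Hence, iterating $[c_z-\gamma+j]-[c_i]=q^{c_i}[c_z-c_i-\gamma+j]$ for $i=y,\dots,r-1$ multiplies $m_\U$ by $\prod_{i=y}^{r-1}q^{c_i}[c_z-c_i-\gamma+j]$. Combining the two factors and collecting the powers of $q$ yields the stated expression for Case~$r<z$.

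The main obstacle is the analogue of Lemma~\ref{ONAB} for the weakly bumped tableaux~$\U$: such a $\U$ need not be semistandard, hence not basic in the sense of Section~3.2. However, basicness was used in Lemma~\ref{ONAB} only to identify $\dot\U$ as an almost initial tableau so that the content of $n+j$ could be read off. In the weakly bumped tableaux the same conclusion holds, because $z+j,z+j+1,\dots,z+\gamma$ all occupy row~$z$ and appear in row order (so $n+j$ ends up in the same column of row~$z$ of $\dot\U$ that it would occupy in $\dot\T$ had $z+j$ started in row~$z$). Once this is verified, the commutativity argument via $D_\U$ is routine and the two halves of the proof glue together to give the corollary.
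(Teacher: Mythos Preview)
Your proof is correct and follows exactly the route the paper intends: the paper's own proof is the single sentence ``This is an immediate consequence of Proposition~\ref{ONMJ} and Lemma~\ref{ONAB},'' and you have simply unpacked that sentence---splitting the product at $i=r$, applying Lemma~\ref{ONMJ} to the upper half and iterating Lemma~\ref{ONAB} on the lower half (or, when $r=z$, using Lemma~\ref{ONAB} alone). Your final paragraph, which flags and resolves the issue that the weakly bumped tableaux $\U$ need not be basic, is a genuine clarification: the paper silently relies on the same fact inside the proof of Lemma~\ref{ONMJ} (where Lemma~\ref{ONAB} is invoked for such $\U$), so you have made explicit a point the paper takes for granted.
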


\begin{proof}
This is an immediate consequence of Proposition \ref{ONMJ} and 
Lemma~\ref{ONAB}.
\end{proof}

The next result will complete the proof of Theorem~\ref{CPDistinctZero}.
Although we could prove this result for a slightly more general class of
partitions, we assume that $\nu_i-\nu_{i+1}\ge\gamma$, for $1\le i<z$,
because this assumption significantly simplifies the notation that we need.

Suppose $\t=\tnu_\eta$ is an almost initial tableau.     
Choose $k$ with $1\le k\le \gamma$ and let $\eta^{(k)}$ be the partition of $n$ given by
\[\eta^{(k)}_i = \begin{cases}
\eta_i+\t_i^{> n+\gamma-k} , & 1\le i < z, \\
\nu_i-k, & i=z. 
\end{cases}\]
Write $\U \inim{k} \t$ if $\U \in \mathcal{T}_0(\nu,\eta+1^\gamma)$
and $\Shape(\U_{\downarrow z})=\eta^{(k)}$ and the numbers
$z+1,z+2,\ldots,z+\gamma$ in $\U$ are in row order. 

\begin{proposition}\label{Image}
Assume that $\nu_i-\nu_{i+1} \geq \gamma$, for $1 \leq i<z$, and that $\t=\tnu_\eta$ is an almost initial tableau.  Suppose that $1 \leq k \leq \gamma$ and that
$1\le y\le\row_\t(n+\gamma-k+1)$.  Then   
\begin{equation*}
m_{\t} \prod_{i=y}^z \prod_{j=1}^k (L_{n+\gamma-j+1}-[c_i]) =q^{c(k)} \sum_{\U \inim{k} \t}  
    \Bigg(\prod_{i=y}^{z-1}[\U^{(i,z]}_i]^! 
          \prod_{j=0}^{k-\U^{(i,z]}_i-1}[c_z-c_i-j]\Bigg)  m_\U
\end{equation*}
where $$c(k)=\sum_{i=y}^z k c_i +\t^{>n+\gamma-k}_i \Big(\t_i^{(n,n+\gamma-k]} - \t^{>n+\gamma-k}_{>i}-c_i\Big).$$
\end{proposition}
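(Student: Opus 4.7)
We argue by induction on $k\ge 1$. The conceptual picture is that each of the $k$ factors $\prod_{i=y}^z(L_{n+\gamma-j+1}-[c_i])$ (for $j=1,\dots,k$) weakly bumps the entry $n+\gamma-j+1$ (equivalently, $z+\gamma-j+1$ in the semistandard basis) down from its row in $\t$ into row~$z$: the factors with $i<z$ execute the bumping via Corollary~\ref{CONMJ}, and the $i=z$ factor contributes its terminal scalar via Lemma~\ref{ONAB}. After all $k$ factors have acted, the entries $z+\gamma-k+1,\dots,z+\gamma$ all lie in row~$z$ of the resulting basic tableaux, which are precisely $\{\U:\U\inim{k}\t\}$. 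For the base case $k=1$, we pass from $m_\t$ to the basic tableau $\T$ with $\dot\T=\t$ via the identity $m_\T=m_\t D_\T$ of~(\ref{m_S expansion}), apply Corollary~\ref{CONMJ} with $j=\gamma$ (using Lemma~\ref{ONAB} to handle the $i=z$ factor), and invoke Lemma~\ref{CosetMix} to identify the resulting terms $m_\s D_\T$ with semistandard basis elements $m_\U$ for $\U\inim{1}\t$; the coefficients and the exponent $c(1)$ then agree after elementary simplification.

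For the inductive step, commutativity of the Jucys--Murphy elements lets us split
\[
\prod_{i=y}^{z}\prod_{j=1}^{k}(L_{n+\gamma-j+1}-[c_i])
= \Bigg(\prod_{i=y}^z\prod_{j=1}^{k-1}(L_{n+\gamma-j+1}-[c_i])\Bigg)\cdot\prod_{i=y}^z(L_{n+\gamma-k+1}-[c_i]).
\]
The row-order filling of $\t=\tnu_\eta$ gives $\row_\t(n+\gamma-k+1)\le\row_\t(n+\gamma-k+2)$, so the inductive hypothesis applies to the first factor and expands $m_\t$ times it as $\sum_{\U'\inim{k-1}\t}\alpha_{\U'}m_{\U'}$. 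By the definition of $\eta^{(k-1)}$, each $\U'$ has entries $z+\gamma-k+2,\dots,z+\gamma$ in row~$z$, and a short verification (tracking the row-order filling of $\U'$) shows that $z+\gamma-k+1$ occupies row $\row_\t(n+\gamma-k+1)$ of $\U'$. Since $\row_{\U'}(z+\gamma-k+2)=z$, Corollary~\ref{CONMJ} applies to each $m_{\U'}$ with $j=\gamma-k+1$, and together with Lemma~\ref{ONAB} for the $i=z$ factor its application produces a sum of basic tableaux $\U$ indexed by the weak bumpings of $z+\gamma-k+1$ into row~$z$; these are exactly the tableaux $\U\inim{k}\t$.

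The principal obstacle is coefficient bookkeeping. A fixed target $\U\inim{k}\t$ receives contributions from several different $\U'\inim{k-1}\t$, because the weak bumping of $z+\gamma-k+1$ into row~$z$ can traverse different intermediate rows, and each intermediate configuration in $\U'$ can itself be obtained from multiple predecessors $\U''\inim{k-2}\t$. Showing that the resulting nested sum telescopes into the single product $\prod_{i=y}^{z-1}[\U_i^{(i,z]}]!\prod_{j=0}^{k-\U_i^{(i,z]}-1}[c_z-c_i-j]$ predicted on the right-hand side is the technical heart of the proof, and is effected by iterated application of the Gaussian-integer identity of Lemma~\ref{quantumsum} (in the same spirit as the proof of Lemma~\ref{ONMJ}). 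The prefactor $q^{c(k)}$ is recovered by summing the $q$-exponents contributed by Corollary~\ref{CONMJ} at each stage together with those absorbed into the coset sums via Lemma~\ref{CosetMix}, and verifying that they add up to $c(k)=\sum_{i=y}^z\bigl(kc_i+\t_i^{>n+\gamma-k}(\t_i^{(n,n+\gamma-k]}-\t_{>i}^{>n+\gamma-k}-c_i)\bigr)$ is a lengthy but routine exponent calculation.
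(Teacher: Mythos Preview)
Your inductive scaffolding matches the paper's: induct on $k$, peel off the $j=k$ factor, apply the inductive hypothesis to the first $k-1$ factors, and then hit each intermediate term with Corollary~\ref{CONMJ} (plus Lemma~\ref{ONAB} for the $i=z$ factor) to land in $\{\U:\U\inim{k}\t\}$. One minor correction to the base case: when $\t=\tnu_\eta$ and $\T$ is the associated basic tableau, each block of the type $\eta+1^\gamma$ lies entirely in a single row of $\t$, so $D_\T=1$ and $m_\T=m_\t$ outright; there is no need to invoke Lemma~\ref{CosetMix}, and Corollary~\ref{CONMJ} already returns semistandard elements.

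The genuine gap is the step you label ``the technical heart''. The paper does \emph{not} close the coefficient identity by iterated applications of Lemma~\ref{quantumsum}. For a fixed target $\U$, the contributing intermediate tableaux $\V$ with $\U\prec^{\text{w}}_{z+\gamma-k+1}\V\inim{k-1}\t$ are parametrised by sequences $\a=(a_{r+1},\dots,a_z)$ as in~(\ref{a seq}), and the paper introduces piecewise row-functions $h_\U^{\a}(i)$ whose definition splits into four cases according to whether $a_i=a_{i+1}$, $a_i<a_{i+1}$, $a_i>a_{i+1}$, or $i=r$. The required identity is then the claim
\[
\sum_{\a\in\A_{a,x}}\prod_{i=x}^{z-1}q^{\U_i^{(a_{i+1},z]}}h_\U^{\a}(i)
   =\prod_{i=x}^{z-1}[\U_i^{(i,z]}][C_i+\U_i^{(i,z]}],
\]
established by downwards induction on $x$. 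This is in the spirit of Lemma~\ref{ONMJ}, but Lemma~\ref{quantumsum} alone does not collapse the sum because of the case-splitting in $h_\U^{\a}$; the piecewise structure has to be handled explicitly. Your sketch needs either this claim, or an equivalent device, spelled out to be complete.
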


\begin{proof} For the duration of the proof we set 
$\calL'_{k'}=\prod_{i=y}^z\prod_{j=1}^{k'}(L_{n+\gamma-j+1}-[c_i])$, for $1 \leq k' \leq k$.  
Then we have to compute $m_\t\calL'_k$.
First note that if $\T$ is the basic tableau obtained by replacing each
entry $x$ with $1\le x\le n$ in $\t$ by its row index in $\t$ and each
entry $n+1\le x\le n+\gamma$ with $x-n+z$ then $m_\t=m_\T$ by (\ref{m_S
expansion}). If $k=1$ or $\row_\t(n+\gamma-k+1)=z$ then the
result follows from Corollary \ref{CONMJ}.  So suppose that
$1<k\le\gamma$ and that $\row_\t(n+\gamma-k+1)=r<z$. By induction on~$k$ we can assume that the Proposition holds for $m_\t\calL'_{k'}$ whenever $1\le k'<k$.     

Repeated applications of Corollary \ref{CONMJ} shows that
$m_\t\calL'_k=m_\T\calL'_k$ is a linear combination of terms $m_\U$, where 
$\U\inim{k}\t$. That each tableau $\U$ is semistandard follows because
$\nu_i-\nu_{i+1}\ge \gamma$ for all $i$. We now fix $\U$ with 
$\U\inim{k}\t$ and compute the coefficient of $m_\U$ in $m_\T\calL'_k$.

Suppose that~$V$ is a basic tableau such that 
$\U\prec^{\text{w}}_{n+\gamma-k+1}\V\inim{k-1}\t$. By Corollary
\ref{CONMJ}, the coefficient of $m_\U$ in 
$m_\V \prod_{i=y}^{z-1}(L_{n+\gamma-k+1}-[c_i])$ is
\[q^{c_{1}+\dots+c_{r-1}+c_{r+1}+\ldots+c_z+b_r^z(\U,\V)-k+1}g_\V^\U
     \prod_{i=y}^{r-1}[c_z-c_i-k+1].\]
By induction, the coefficient of $m_\V$ in $m_{\t}\calL'_{k-1}$ is  
\[q^{c(k-1)}\prod_{i=y}^{z-1}\Big([\V^{(i,z]}_i]^!
     \prod_{j=0}^{k-\V^{(i,z]}_i-1}[c_z-c_i-j]\Big).\]
Now observe that 
\[c(k)=c(k-1) +c_1+\ldots+c_z-c_r+\t^{(n,n+\gamma-k]}-k+1.\]
Therefore, the coefficient of 
$q^{c(k)}m_\U$  in $m_\t\calL'_k$ is
$$\sum_{\substack{\V\in\SStd(\nu,\eta+1^\gamma)\\
                \U\prec^{\text{w}}_{n+\gamma-k+1}\V\inim{k-1}\t}}\!\!\!\!\!\!
     q^{\t_r^{(n,n+\gamma-k]}+b_r^z(\U,\V)} g_\V^\U \prod_{i=y}^{r-1}[c_z-c_i-k+1]\cdot
     \prod_{i=y}^{z-1}\Big([\V^{(i,z]}_i]!\!\!
         \prod_{j=0}^{k-\V^{(i,z]}_i-1}\!\![c_z-c_i-j]\Big).
$$
Consulting the definitions, if $\V\in\SStd(\nu,\eta+1^\gamma)$ and
$\U\prec^{\text{w}}_{n+\gamma-k+1}\V\inim{k-1}\t$ then
\[\V^{(i,z]}_i = \begin{cases}
\U^{(i,z]}_i, &  1\le i\le r-1, \text{ or } r+1\le i\le z \text{ and }r+b_i\ne i, \\
\U^{(i,z]}_i-1, & i=r, \text{ or } r+1\le i\le z \text{ and }b_i = i,
\end{cases}\]
whenever $1\le i\le z$. This allows us to rewrite the last equation in
terms of $\U$. Before we do this, however, we change the indexing set for
the sum to something that is more manageable.

Suppose that $\U\prec^{\text{w}}_{n+\gamma-k+1}\V$. Then $\V$ is completely
determined by a sequence $\a^\U_\V=(a_{r+1},\dots,a_z)$ as in (\ref{a
seq}). Let $\A=\set{\a=(a_{r+1},\ldots,a_z)|i\le a_i\le z\text{ for }r\le
i\le z}$. Then $\a^\U_\V\in\A$ for each tableau $\V$ in the sum above.
Conversely, if $\a\in\A$ and $\a$ does not correspond to one of the tableau
above then there exists an $i$, with $r\le i\le z-1$, such that $a_i\ne
a_{i+1}$ and $\U^{a_{i+1}}_i=0$.  Therefore, $h_\U^\a(i)=0$, where we define
\[h_\U^{\a }(i)=
\begin{cases}
[C_i+\U^{a_{i+1}}_i][\U^{(i,z]}_i], 
              &\text{if } i\ne a_i = a_{i+1}, \\
[C_i+\U^{(i,z]}_i][\U^{a_{i+1}}_i], 
              &\text{if } i=a_i < a_{i+1},\text{ or if } i=r, \\
[\U^{a_{i+1}}_i][\U^{(i,z]}_i], 
              &\text{if } i\ne a_i < a_{i+1}, \\
q^{C_i}[\U^{a_{i+1}}_i][\U^{(i,z]}_i], 
              &\text{if } i\ne a_i > a_{i+1}.
\end{cases}\]
where $C_i = c_z-c_i-k+1$, for $r\le i<z$.  Recall that
$b_r^z(\U,\V)=\sum_{i=r}^{z-1}\U_i^{>a_{i+1}}= \sum_{i=r}^{z-1} \U_i^{(a_{i+1},z]} +\t^{(n,n+\gamma-k]}_r$.  Therefore, by comparing the
definitions of $g_\U^\V(i)$ and $h_\U^\a(i)$, and observing that
$\V_i^{(i,l)}\le\U_i^{(i,l)}-1$, the coefficient of 
$q^{c(k)}m_\U$ in $m_\t\calL'_k$ given above becomes
$$
     \prod_{i=y}^{r-1}[C_i]\cdot
     \prod_{i=y}^{z-1}\Big([\U^{(i,z]}_i-1]^!
         \prod_{j=0}^{k-\U^{(i,z]}_i-2}[c_z-c_i-j]\Big).
         \sum_{\a\in\A}\prod_{i=r}^{z-1}q^{\U_i^{(a_{i+1},z]}}h_\U^\a(i)
$$
where we adopt the convention that $[-1]^!=1$. 
By definition, $\U_i^{(i,z]}=0$, for $1\le i<r$, and
$C_i+\U_i^{(i,z]}=c_z-c_i-(k-\U_i^{(i,z]}-1)$, for $1\le i<z$. Therefore,
to complete the proof we need to show that 
$$\sum_{\a\in\A}\prod_{i=r}^{z-1}q^{\U_i^{(a_{i+1},z]}}h_\U^\a(i)
             =\prod_{i=r}^{z-1}[\U_i^{(i,z]}][C_i+\U_i^{(i,z]}].$$
This will follow once we have established the following claim by
setting $x=r$ and, for definiteness, $a=r$.

\begin{claim}
 Let 
$\A_{a,x}=\set{(a,a_{x+1},\ldots,a_z)|i\le a_{i}\le z\text{ for }x+1\le i\le z}$ where $r\le x\le z-1$ and $x\le a\le z$. 
Then
\[\sum_{\a \in \A_{a,x}}\prod_{i=x}^{z-1} q^{\U^{(a_{i+1},z]}_i} h_\U^\a(i) 
                 = \prod_{i=x}^{z-1}[\U^{(i,z]}_i][C_i+\U^{(i,z]}_i].\]
\end{claim}
To prove the claim, we use downwards induction on $x$.  If $x=z-1$ then $b=z-1$ or $b=z$.  If $a=z-1$ or $x=r$ then 
\[\sum_{\a \in \A_{a,x}}\prod_{i=x}^{z-1} q^{\U^{(a_{i+1},z]}_i} h_\U^\a(i) = [C_{z-1}+\U^{[z,z]}_{z-1}][\U^{z}_{z-1}], \]
and if $a=z$ and $x\ne r$ then 
\[\sum_{\a \in \A_{a,x}}\prod_{i=x}^{z-1} q^{\U^{(a_{i+1},z]}_i} h_\U^\a(i) = [C_{z-1}+\U^z_{z-1}][\U^{[z,z]}_{z-1}].\]
Since $\U^z_{z-1}=\U^{[z,z]}_{z-1}$, the claim holds for $x=z-1$.  So suppose $r+1\le x <z-1$ and the claim holds for $x+1$.  
\begin{align*}
\sum_{\a \in \A_{a,x}}\prod_{i=x}^{z-1} q^{\U^{(a_{i+1},z]}_i} h_\U^\a(i) 
& =  \sum_{a_{x}=x+1}^{z} q^{\U^{(a_{x+1},z]}_x}h_\U^\a(x) 
\sum_{\a \in \A_{a,x+1}} \prod_{i=x+1}^{z-1}q^{\U^{(a_{i+1},z]}_i}h_\U^\a(i)\\
& = \prod_{i=x+1}^{z-1}[\U^{(i,z]}_i][C_i+\U^{(i,z]}_i]  
    \sum_{a_{x+1}=x+1}^{z} q^{\U^{(a_{x+1},z]}_x}h_\U^\a(x)
\end{align*}
by induction. If $a = x$ or $x=r$ then
\begin{align*}
    \sum_{a_{x+1}=x+1}^{z} q^{\U^{(a_{x+1},z]}_x}h_\U^\a(x)
    & = \sum_{a_{x+1}=x+1}^z q^{\U^{(a_{x+1},z]}_x} [\U^{x_{a+1}}_x] \\
&= [\U^{(x,z]}_x].
\end{align*}
If $a\ne x$ and $x\ne r$ then
$\sum_{a_{x+1}=x+1}^{z} q^{\U^{(a_{x+1},z]}_x}h_\U^\a(x)$ is equal
to
\begin{align*}
[\U^{(x,z]}_x] \Bigg(\sum_{i=x+1}^{a-1}q^{C_x}q^{\U^{(i,z]}_x}&[\U^i_x]
    + q^{\U^{(a,z]}_x}[C_x+\U^{a}_x] 
    + \sum_{i=a+1}^{z} q^{\U^{(i,z]}_x} [\U^{i}_x]\Bigg)\\
&= [\U^{(x,z]}_x] [C_x + \U^{(x,z]}_x]
\end{align*}
This completes the proof of both the claim and the Proposition.
\end{proof}

As Proposition~\ref{ImageDistinct} is a special case of
Proposition~\ref{Image},  this completes the proof of
Theorem~\ref{CPDistinctZero} and, in fact, all of our main results when $F$ is a
field of characteristic zero.

\subsection{Gaussian integer division}
In this section we prove Lemma~\ref{divides1} which were used in
Section~2 to define the polynomials $\beta_{\la\mu}(q)$ in (\ref{CP
hom}). Therefore, the results in this subsection complete the proof of
our main results when $F$ is a field of positive characteristic.
Accordingly, we assume that $F$ is a field of characteristic $p>0$,
that $e>1$ and that $\zeta$ is a primitive $e^{\text{th}}$ root of
unity in $F$.   

Let $K=F(\q)$, where $\q$ is an indeterminate over $F$.  For $l \in \Z$, set $[l]_{\q}=\frac{\q^l-1}{\q-1} \in K$.  Set $[0]_{\q}^!=1\in K$ and for $l\ge 1$ set $[l]_{\q}^! = [l-1]_{\q}^![l]_{\q}$.   
For $l \in \Z \setminus \{0\}$, define $\nu_p(l)$ to be the largest integer $v\ge 0$ such that $p^v$ divides $l$ (in $\Z$) and set
\[\nu_{e,p}(l)= \begin{cases} 
    0, & \text{if }e \nmid l, \\
1+\nu_{p}(\frac{l}{e}), & \text{otherwise}.
\end{cases}\]

\begin{lemma} \label{FirstDivideLemma}
Suppose that $r\ge1$ and that $(a_1,a_2,\ldots,a_r)$ and 
$(b_1,b_2,\ldots,b_r)$ are two $r$-tuples of non-zero integers such that
$\nu_{e,p}(a_j)\ge \nu_{e,p}(b_j)$, for $1\le j\le r$.  Then there exist 
polynomials $f(\q),g(\q) \in F[\q,\q^{-1}]$ such that $g(\zeta)\ne 0$ and 
\[\frac{\prod_{j=1}^r [a_j]_{\q}}{\prod_{j=1}^r[b_j]_{\q}} = \frac{f(\q)}{g(\q)}.\]
\end{lemma}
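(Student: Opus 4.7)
The plan is to compute, for each non-zero integer $l$, the order $\ord_{\q-\zeta}([l]_\q)$ of vanishing of $[l]_\q$ at $\q=\zeta$ in $F[\q]$, and then to show that the hypothesis forces
\[\ord_{\q-\zeta}\Bigl(\prod_{j=1}^{r}[a_j]_\q\Bigr)\ge\ord_{\q-\zeta}\Bigl(\prod_{j=1}^{r}[b_j]_\q\Bigr).\]
Once this inequality is in hand, extracting the common $(\q-\zeta)$-factors from numerator and denominator produces the required $f(\q),g(\q)\in F[\q,\q^{-1}]$ with $g(\zeta)\ne0$.

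The key computation uses the characteristic-$p$ factorisation of $\q^l-1$. Since $\zeta$ is a primitive $e$-th root of unity in $F$ we must have $\gcd(e,p)=1$. For $l>0$, write $l=p^c s$ with $\gcd(s,p)=1$, so $c=\nu_p(l)$; then the Frobenius identity gives $\q^l-1=(\q^s-1)^{p^c}$ in $F[\q]$, and $\q^s-1$ has distinct roots in $\bar F$. Hence $\zeta$ is a root of $\q^l-1$ precisely when $\zeta^s=1$, i.e.\ when $e\mid s$; and by coprimality this is equivalent to $e\mid l$. In that case the multiplicity of $\zeta$ as a root of $\q^l-1$ is exactly $p^c=p^{\nu_p(l)}$. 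Dividing by $\q-1$, which is non-zero at $\zeta$ since $e>1$, and using $[-m]_\q=-\q^{-m}[m]_\q$ to reduce $l<0$ to $|l|$, we conclude that $\ord_{\q-\zeta}([l]_\q)=p^{\nu_p(l)}$ when $e\mid l$ and is $0$ otherwise.

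Because $\gcd(e,p)=1$ we have $\nu_p(l/e)=\nu_p(l)$, so when $e\mid l$ the formula above reads $p^{\nu_{e,p}(l)-1}$. The hypothesis $\nu_{e,p}(a_j)\ge\nu_{e,p}(b_j)$ therefore gives $\ord_{\q-\zeta}([a_j]_\q)\ge\ord_{\q-\zeta}([b_j]_\q)$ term by term: if $\nu_{e,p}(b_j)=0$ the right side is zero, and otherwise $\nu_{e,p}(a_j)\ge\nu_{e,p}(b_j)\ge1$ immediately gives $p^{\nu_{e,p}(a_j)-1}\ge p^{\nu_{e,p}(b_j)-1}$. Summing over $j$, writing $\prod_j[a_j]_\q=(\q-\zeta)^{\alpha}\tilde f(\q)$ and $\prod_j[b_j]_\q=(\q-\zeta)^{\beta}\tilde g(\q)$ with $\tilde f(\zeta),\tilde g(\zeta)\ne0$, and using $\alpha\ge\beta$, we take $f(\q)=(\q-\zeta)^{\alpha-\beta}\tilde f(\q)$ and $g(\q)=\tilde g(\q)$. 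The only non-trivial step is the characteristic-$p$ root-multiplicity calculation in the middle paragraph; the translation between $\nu_p$ and $\nu_{e,p}$ and the final cancellation are bookkeeping.
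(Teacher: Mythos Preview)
Your proof is correct, and it takes a somewhat different route from the paper's.

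The paper reduces immediately to a single pair $a,b$ with $\nu_{e,p}(a)\ge\nu_{e,p}(b)$, assumes $a,b>0$ via $[l]_\q=-\q^l[-l]_\q$, and in the nontrivial case writes $a=xep^k$, $b=yep^l$ with $p\nmid x,y$ and $k\ge l$. Then the explicit identity
\[
\frac{[a]_\q}{[b]_\q}=\frac{1+\q^{ep^l}+\cdots+\q^{(xp^{k-l}-1)ep^l}}{1+\q^{ep^l}+\cdots+\q^{(y-1)ep^l}}
\]
does all the work: since $\zeta^{ep^l}=1$, the denominator evaluates to $y\ne0$ in $F$. This produces concrete $f,g$ without ever mentioning root multiplicities.

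Your argument instead computes the order of vanishing $\ord_{\q-\zeta}([l]_\q)$ directly, using the Frobenius factorisation $\q^l-1=(\q^s-1)^{p^c}$ in characteristic $p$ together with the separability of $\q^s-1$ to get $\ord_{\q-\zeta}([l]_\q)=p^{\nu_p(l)}$ when $e\mid l$ and $0$ otherwise; you then cancel the common $(\q-\zeta)$-powers. This is slightly less elementary but more informative: it pins down the exact order of vanishing and makes transparent that $\nu_{e,p}$ is exactly the invariant governing it. The paper's approach is more constructive (you can read off $f$ and $g$), while yours is more structural; both are short and either would serve.
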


\begin{proof}
It is sufficient to show that if $a,b \in \Z\setminus\{0\}$ and
$\nu_{e,p}(a)\ge \nu_{e,p}(b)$ then $[a]_{\q}/[b]_{\q}$ can be written in
this form.  Since $[l]_{\q}=-\q^l[-l]_{\q}$, we may assume that $a,b >0$.
If $\nu_{e,p}(b)=0$ then $[a]_{\q}/[b]_{\q}$ itself is of the correct form.
So take $a=xep^k, b=yep^l$ where $p \nmid x,y$ and $k\ge l$.  Then
\[\frac{[a]_{\q}}{[b]_{\q}}
    =\frac{1+\q+\ldots+\q^{a-1}}{1+\q+\ldots+\q^{b-1}}
    =\frac{1+\q^{ep^l}+\ldots+\q^{(xp^{k-l}-1)ep^l}}
          {1+\q^{ep^l}+\ldots+\q^{(y-1)ep^l}}.
\]
Since $\zeta$ is an $e^{\text{th}}$ root of unity and $p \nmid y$, the value of
the denominator of the right hand term at $\zeta$ is non-zero.  
\end{proof}

\begin{lemma} \label{CMgamma1}
Suppose that $K,\gamma,m > 0$.  For any integer $l$ define $l'$ by writing $l=l^\ast m + l'$ where $0\le l' <m$.  
Let $C= -K$.  For $0\le X\le \gamma$, let $\M_X$ be the multiset $\{1,2,\ldots,X,K,K+1,\ldots,K+\gamma-X-1\}$ and let $N(X)$ be the number of elements of $\M_X$ which are divisible by $m$.  Then
\[N(X) = \begin{cases}
\max\left\{0,\left\lceil\frac{\gamma-C'}{m}\right\rceil\right\}, & X' < (\gamma+K)', \\[5pt]
\max\left\{0,\left\lfloor\frac{\gamma-C'}{m}\right\rfloor\right\}, & X'\ge(\gamma+K)'. \\
\end{cases}     \]
\end{lemma}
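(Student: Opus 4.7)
The plan is to compute $N(X)$ directly. I would split $\mathcal{M}_X$ into its initial segment $\{1,\dots,X\}$ and the arithmetic block $\{K,K{+}1,\dots,K+\gamma-X-1\}$, count the multiples of $m$ in each using the standard identity $\#\{j\in[a,b]:m\mid j\}=\lfloor b/m\rfloor-\lfloor(a-1)/m\rfloor$, and combine these into
\[N(X)=\lfloor X/m\rfloor+\lfloor(K+\gamma-X-1)/m\rfloor-\lfloor(K-1)/m\rfloor.\]

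The first key step is to evaluate the middle floor using the decomposition
\[K+\gamma-X-1=\bigl((K+\gamma)^*-X^*\bigr)m+\bigl((K+\gamma)'-X'-1\bigr).\]
The bracketed remainder lies in $[0,m)$ exactly when $X'<(K+\gamma)'$, so the floor equals $(K+\gamma)^*-X^*$ in case~1 and $(K+\gamma)^*-X^*-1$ in case~2. The payoff is that the $X^*$ contributions cancel against $\lfloor X/m\rfloor$, so $N(X)$ is constant within each case.

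The second step is to rewrite these constants in terms of $T:=\gamma-C'$. Since $C'\equiv-K\pmod m$ with $0\le C'<m$, the integer $K+C'$ is a multiple of $m$, and a short check gives $K+C'=m\bigl(\lfloor(K-1)/m\rfloor+1\bigr)$. Combined with $K+\gamma=(K+C')+T$, this yields $(K+\gamma)^*=(K+C')/m+\lfloor T/m\rfloor$, and substitution reduces the problem to showing $N(X)=\lfloor T/m\rfloor+1$ in case~1 and $N(X)=\lfloor T/m\rfloor$ in case~2.

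The main obstacle is reconciling these with the stated $\max\{0,\cdot\}$ expressions by carefully identifying when each case is vacuous. I would prove: (i) $m\mid T$ forces $(K+\gamma)'=0$, making case~1 vacuous, so whenever case~1 survives either $m\nmid T$ with $T>0$ (giving $\lfloor T/m\rfloor+1=\lceil T/m\rceil\ge 1$) or $-m<T<0$ (the only way $T$ can be negative, since $0\le C'<m$; giving $\lfloor T/m\rfloor+1=0$); (ii) in case~2, if $T<0$ then $(K+\gamma)'=m+T>\gamma$, contradicting $X'\le X\le\gamma$, so case~2 is vacuous whenever $T<0$ and consequently $\lfloor T/m\rfloor\ge 0$ on its surviving range. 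Once these vacuity statements are in hand, the $\max\{0,\cdot\}$ truncations exactly match the surviving regimes.
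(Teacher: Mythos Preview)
Your argument is correct and follows essentially the same direct-counting strategy as the paper, just organized more mechanically. The paper's proof compresses your first four steps into one observation: since $\{1,\dots,X\}$ contributes exactly $X^*$ multiples of $m$, and extending the block $\{K,\dots,K+\gamma-X-1\}$ by $X^*m$ elements to $\{K,\dots,K+\gamma-X'-1\}$ adds exactly $X^*$ multiples of $m$, one has immediately
\[
N(X)=\#\{j\in\{K,\dots,K+\gamma-X'-1\}:m\mid j\}=\max\Bigl\{0,\Bigl\lceil\tfrac{\gamma-C'-X'}{m}\Bigr\rceil\Bigr\},
\]
using that the first multiple of $m$ at or above $K$ is $K+C'$. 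From here the case split on $X'$ versus $(\gamma+K)'=(\gamma-C')'$ is a one-line floor/ceiling computation. Your route---computing three separate floors and then cancelling $X^*$---reaches the same intermediate formula $N(X)=\lfloor T/m\rfloor+1$ or $\lfloor T/m\rfloor$ and your vacuity analysis in step~(ii) is exactly what is needed to justify the $\max\{0,\cdot\}$; the paper leaves this last point implicit.
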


\begin{proof}
By definition, $N(X)$ is equal to the number of elements of
$\{K,K+1,\ldots,K+\gamma-X'-1\}$ which are divisible by $m$.  It is then
straightforward to check that this 
$$N(X)=\max\left\{0,\left\lceil \frac{\gamma-C'-X'}{m}\right\rceil\right\}.$$  
Noting that $(\gamma-C')' = (\gamma+K)'$, the result follows.    
\end{proof}

\begin{lemma} \label{CMgamma2}
  Suppose $K>0$ and $\gamma\ge e$.  For  $0\le X\le \gamma$, let $\M_X$ be
  the multiset \[\M_X=\{1,2,\ldots,X,K,K+1,\ldots,K+\gamma-X-1\}.\] For
  $i\ge 0$, set $N(X)_i=\#\set{x \in \M_X|\nu_{e,p}(x)\ge i}$. Let $s$
  be maximal such that $\gamma\ge e p^s$ and $A$ minimal such that 
  $Aep^s\ge K$ and set $\beta=\gamma-Aep^s+K$, so that
  $$\M_\beta=\{1,2,\ldots,\gamma-Aep^s+K,K,K+1,\ldots,Aep^s-1\}.$$
  Then $0\le \beta\le \gamma$ and if $0\le X\le\gamma$ then
  $N(\beta)_i\le N(X)_i$, for all $i\ge 0$.  
\end{lemma}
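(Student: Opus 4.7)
My plan is to establish the two assertions separately. First I verify the bounds $0\le\beta\le\gamma$, and then I show the minimality property by analyzing $N(X)_i$ case-by-case on $i$, using Lemma~\ref{CMgamma1}.

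For the bounds, the minimality of $A$ gives $(A-1)ep^s<K\le Aep^s$. The right inequality immediately yields $\beta=\gamma+K-Aep^s\le\gamma$; the left inequality gives $Aep^s<K+ep^s$, and since $ep^s\le\gamma$ by the maximality of $s$, we get $Aep^s\le K+\gamma-1$, hence $\beta\ge 1>0$. This part is routine.

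For the minimality $N(\beta)_i\le N(X)_i$ the case $i=0$ is trivial since $N(X)_0=\gamma$ for every $X$. For $i\ge 1$ set $m=ep^{i-1}$. I split into two subcases. If $i-1\le s$, then $m$ divides $ep^s$, so from $\beta=\gamma+K-Aep^s$ I deduce $\beta\equiv\gamma+K\pmod m$, i.e.\ $\beta'=(\gamma+K)'$. Hence Lemma~\ref{CMgamma1} places $\beta$ in the ``floor'' case, so $N(\beta)_i=\max\{0,\lfloor(\gamma-C')/m\rfloor\}$, which is the minimum of the two values $N(X)_i$ can take as $X$ varies, proving the inequality for all $X$.

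The second subcase is $i-1>s$, in which $m=ep^{i-1}\ge ep^{s+1}>\gamma$ by the maximality of $s$. Here I show directly that $N(\beta)_i=0$. Writing $\M_\beta=\{1,\dots,\beta\}\cup\{K,K+1,\dots,Aep^s-1\}$, the first set has $\beta\le\gamma<m$ elements and hence contains no multiple of $m$. For the second set, suppose some $Bm$ satisfies $K\le Bm\le Aep^s-1$. Since $ep^s\mid m$, the number $Bm$ is a multiple of $ep^s$ that is $\ge K$ and strictly smaller than $Aep^s$, contradicting the minimality of $A$. Thus no element of $\M_\beta$ is divisible by $m$, giving $N(\beta)_i=0\le N(X)_i$.

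The main obstacle is bookkeeping rather than conceptual: the definition of $\beta$ is engineered precisely so that in the small-$m$ regime its residue modulo $m$ lines up with $(\gamma+K)'$ (triggering the floor branch of Lemma~\ref{CMgamma1}) while in the large-$m$ regime the companion interval $\{K,\dots,Aep^s-1\}$ is squeezed between consecutive multiples of $ep^s$ and hence of $m$. Once this geometric picture is clear, the verification is straightforward; the trickiest point to write cleanly will be the reduction from ``multiple of $m$'' to ``multiple of $ep^s$'' in the large-$m$ case, where one must use exactly the minimality of~$A$.
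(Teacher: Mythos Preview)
Your proof is correct and follows the paper's strategy of invoking Lemma~\ref{CMgamma1} to show that $\beta$ lands in the floor branch. You are in fact more careful than the paper in one respect: the paper asserts $\beta'=(\gamma+K)'$ for every $i$, but this equality can fail when $m=ep^{i-1}>\gamma$ (since $Aep^s$ need not be divisible by such $m$); your direct verification that $N(\beta)_i=0$ in that regime, via the minimality of $A$, cleanly handles the case the paper glosses over.
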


\begin{proof} That $0\le\beta\le\gamma$ is clear from the definitions. To
  prove the second claim $i\ge 0$. For any integer $l\ge 0$ define $l'$ by
  $l=l^\ast ep^i + l'$ where $0\le l'< ep^i$.  By Lemma \ref{CMgamma1}, to
  show that $N(\beta)_i\le N(X)_i$ whenever $0\le X\le \gamma$ it is
  sufficient to prove that $\beta'\ge (\gamma +K)'$. In fact, our choice of
  $\beta$ gives $\beta'=(\gamma+K)'$.  
\end{proof}

\begin{corollary} \label{CMgamma3}
Suppose that $\gamma >0$ and $C<0$.  For $0\le X\le \gamma$, let $\M_X$
denote the multiset $\M_X=\{1,2,\ldots,X,C,C-1,\ldots,C-\gamma+X+1\}$. For
$i\ge 0$ let $$N(X)_i = \#\set{x \in \M_X | \nu_{e,p}(x)\ge i}.$$ 
Then there exists an integer $\beta$ with $0\le \beta\le \gamma$ such that
$N(\beta)_i\le N(X)_i$ whenever $0\le X\le \gamma$ and $i\ge 0$.   
\end{corollary}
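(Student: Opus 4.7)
The plan is to reduce Corollary~\ref{CMgamma3} to the already-proven Lemma~\ref{CMgamma2} by exploiting a parity/negation symmetry of $\nu_{e,p}$, and then to dispose of the small-$\gamma$ case that Lemma~\ref{CMgamma2} does not directly cover.

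The key observation I would start from is that $\nu_{e,p}(l)=\nu_{e,p}(-l)$ for every non-zero $l\in\Z$: the usual $p$-adic valuation is invariant under negation, and divisibility by $e$ is too. Consequently, if I negate every element of the ``descending'' block $C,C-1,\ldots,C-\gamma+X+1$ of $\M_X$, the resulting multiset has the same $\nu_{e,p}$-values (with multiplicities) and therefore the same counting function $N(X)_i$. Setting $K=-C>0$, this replacement turns $\M_X$ into
\[\widetilde\M_X=\{1,2,\ldots,X,\;K,K+1,\ldots,K+\gamma-X-1\},\]
which is exactly the multiset appearing in Lemma~\ref{CMgamma2}.

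At this point the proof splits along the divide at $\gamma=e$. If $\gamma\ge e$, I would simply apply Lemma~\ref{CMgamma2} with this $K$; it produces $\beta\in[0,\gamma]$ with $N(\beta)_i\le N(X)_i$ for every $X$ and every $i\ge 0$, and we are done. If instead $\gamma<e$, I would take $\beta=\gamma$, so that the second block of $\widetilde\M_\gamma$ is empty and
\[\widetilde\M_\gamma=\{1,2,\ldots,\gamma\}.\]
For $i=0$ the inequality $N(\gamma)_0\le N(X)_0$ is trivial, since both sides equal $\gamma$. For $i\ge 1$, having $\nu_{e,p}(x)\ge i$ forces $ep^{i-1}\mid x$, but $ep^{i-1}\ge e>\gamma$, so no element of $\widetilde\M_\gamma$ qualifies and $N(\gamma)_i=0\le N(X)_i$.

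The whole argument is short, and there is no genuine obstacle once the negation trick is noticed. The only point that requires any vigilance is the boundary case $\gamma<e$, which is outside the hypotheses of Lemma~\ref{CMgamma2} and must be handled by the trivial choice $\beta=\gamma$ as above.
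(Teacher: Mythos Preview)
Your proof is correct and follows essentially the same route as the paper: negate the descending block using $\nu_{e,p}(l)=\nu_{e,p}(-l)$ to reduce to the multiset of Lemma~\ref{CMgamma2} with $K=-C$, apply that lemma when $\gamma\ge e$, and take $\beta=\gamma$ when $\gamma<e$. You have supplied more detail than the paper for the case $\gamma<e$, but the argument is the same.
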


\begin{proof}
If $\gamma<e$ then set $\beta=\gamma$.  Otherwise set $K=-C$.  Then for all
$i$, $N(X)_i$ is the number of elements $x \in
\{1,2,\ldots,X,K,K+1,\ldots,K+\gamma-X-1\}$ such that $\nu_{e,p}(x)\ge i$.
Hence, the result follows from Lemma \ref{CMgamma2}.   
\end{proof}

\begin{lemma} \label{divides1}
Suppose that $\gamma >0$ and that $C<0$.  Write $\gamma = \gamma^\ast e +
\gamma'$ where $0\le \gamma' <e$.  Then there exists an integer $\beta$,
with $0\le \beta\le \gamma$, and polynomials 
$f_X(\q),g_X(\q) \in F[\q,\q^{-1}]$ such that $g_X(\zeta)\ne 0$ and
\[\frac{[X]_{\q}^!\prod_{j=0}^{\gamma-X-1}[C-j]_{\q}}{[\beta]_{\q}^!
                 \prod_{j=0}^{\gamma-\beta-1}[C-j]_{\q}} 
                 = \frac{f_X(\q)}{g_X(\q)},\]
whenever $0\le X\le \gamma$.  Moreover, if 
$C \equiv 0 \mod ep^{\ell_p(\gamma^\ast)}$ then $\beta =\gamma$ and 
$f_X(\zeta)\ne 0$ if and only if $X=\gamma$.  
\end{lemma}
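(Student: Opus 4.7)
The plan is to reduce the stated ratio to a quotient of products of Gaussian integers to which Lemma~\ref{FirstDivideLemma} applies. Writing $K=-C>0$ and using the identity $[-m]_q=-q^{-m}[m]_q$, each factor $[C-j]_q$ equals $-q^{C-j}[K+j]_q$, so up to a unit in $F[q,q^{-1}]$ we have
\[
[X]_q^!\prod_{j=0}^{\gamma-X-1}[C-j]_q \;\equiv\; \prod_{a\in\M_X}[a]_q,
\]
where $\M_X$ is the multiset from Corollary~\ref{CMgamma3}. The target ratio is therefore equal, up to a unit, to $\prod_{a\in\M_X}[a]_q/\prod_{b\in\M_\beta}[b]_q$.

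For the first assertion, Corollary~\ref{CMgamma3} supplies an integer $\beta$ with $0\le\beta\le\gamma$ such that $N(\beta)_i\le N(X)_i$ for every $X\in[0,\gamma]$ and every $i\ge0$. Since $|\M_X|=|\M_\beta|=\gamma$, sorting each multiset in decreasing order of $\nu_{e,p}$ and pairing in order produces a bijection $\M_\beta\leftrightarrow\M_X$ under which each matched pair $(b,a)$ satisfies $\nu_{e,p}(a)\ge\nu_{e,p}(b)$; applying Lemma~\ref{FirstDivideLemma} to this paired list then delivers polynomials $f_X(q),g_X(q)$ with $g_X(\zeta)\ne0$ and ratio as required.

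For the moreover part, set $\ell=\ell_p(\gamma^*)$, so that $\gamma^*<p^\ell$ and hence $\gamma<ep^\ell$. I claim $\beta=\gamma$ works. For $i\le\ell+1$, since $ep^{i-1}\mid K$, the number of multiples of $ep^{i-1}$ in $\{K,\dots,K+\gamma-X-1\}$ equals $\lceil(\gamma-X)/(ep^{i-1})\rceil$, so
\[
N(X)_i = \lfloor X/(ep^{i-1})\rfloor + \lceil(\gamma-X)/(ep^{i-1})\rceil \;\ge\; \lfloor\gamma/(ep^{i-1})\rfloor = N(\gamma)_i,
\]
while for $i>\ell+1$ we have $ep^{i-1}>\gamma$, so $N(\gamma)_i=0\le N(X)_i$. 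Hence $\beta=\gamma$ satisfies the hypothesis of the first part. The case $X=\gamma$ gives quotient $1$, whence $f_\gamma(\zeta)\ne0$. For $X<\gamma$, take $i=\ell+1$: then $N(\gamma)_{\ell+1}=0$ while $K\in\M_X$ has $\nu_{e,p}(K)\ge\ell+1$, so $N(X)_{\ell+1}\ge1$, forcing a matched pair $(b,a)$ with $\nu_{e,p}(a)>\nu_{e,p}(b)$. Inspecting the proof of Lemma~\ref{FirstDivideLemma} shows that for such a pair the corresponding factor evaluates at $\zeta$ to $xp^{k-l}/y$ with $k>l$, which vanishes in $F$; combined with $g_X(\zeta)\ne0$ this forces $f_X(\zeta)=0$.

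I expect the main subtlety to be this last step: concluding from a single strict inequality $\nu_{e,p}(a)>\nu_{e,p}(b)$ in the pairing that $f_X(\zeta)$ (rather than just the formal ratio) actually vanishes requires unpacking the explicit normal form $xp^{k-l}/y$ from the proof of Lemma~\ref{FirstDivideLemma}, not merely its abstract divisibility content.
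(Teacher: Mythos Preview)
Your argument is correct and follows the same route as the paper: reduce the ratio (up to a unit) to $\prod_{a\in\M_X}[a]_q/\prod_{b\in\M_\beta}[b]_q$, invoke Corollary~\ref{CMgamma3} to find $\beta$, pair the multisets by decreasing $\nu_{e,p}$, and apply Lemma~\ref{FirstDivideLemma}. For the ``moreover'' clause the paper cites Lemma~\ref{CMgamma2} to get $\beta=\gamma$, whereas you verify this directly; both are fine. The paper then, exactly as you do, singles out the element $C$ (equivalently $K$) in $\M_X$ and its partner $b_1\in\M_\gamma=\{1,\dots,\gamma\}$, observes $\nu_{e,p}(b_1)<\nu_{e,p}(C)$ since $b_1\le\gamma<ep^{\ell}$, and shows the corresponding factor of $f_X$ vanishes at $\zeta$ by unpacking the explicit form from the proof of Lemma~\ref{FirstDivideLemma}.

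One small omission: your formula ``$xp^{k-l}/y$'' comes from the case $e\mid b$ in the proof of Lemma~\ref{FirstDivideLemma}. If $e\nmid b$ (i.e.\ $\nu_{e,p}(b)=0$) that proof takes $f=[a]_q$, $g=[b]_q$ directly, and then $f(\zeta)=[a]_\zeta=0$ since $e\mid a$. The paper makes this case distinction explicitly; you should add a sentence covering it.
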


\begin{proof} 
Using the notation of Corollary \ref{CMgamma3}, there exists an integer
$\beta$ with $0\le \beta\le \gamma$ such that $N(\beta)_i\le N(X)_i$ for
all $i\ge 0$.  Therefore it is possible to reorder the elements in the
multisets $\M_X=\{x_1,x_2,\ldots,x_\gamma\}$ and
$\M_{\beta}=\{b_1,b_2,\ldots,b_\gamma\}$ in such a way that 
$\nu_{e,p}(x_j)\ge \nu_{e,p}(b_j)$, for $1\le j\le \gamma$. Hence, by 
Lemma~\ref{FirstDivideLemma}, there exists an integer $\beta$ with the
required properties.    

Now suppose that $C \equiv 0 \mod ep^{\ell_p(\gamma^\ast)}$.  Note that $ep^{\ell(\gamma^\ast)} > \gamma$.  By Lemma \ref{CMgamma2}, we may take $\beta=\gamma$.   Now, suppose $X\ne \beta$.  Reorder $\M_X$ and $\M_\beta$ as above so that $\nu_{e,p}(x_j)\ge \nu_{e,p}(b_j)$ for $1\le j\le \gamma$.  Assume that $x_1=C$.  By Lemma \ref{FirstDivideLemma}
\[\frac{\prod_{j=1}^\gamma [x_j]}{\prod_{j=1}^\gamma [b_j]}
             =\frac{[C]_{\q}f'_X(\q)}{[b_1]_{\q}g'_X(\q)}\]
for some $f'_X(\q),g'_X(\q)\in F[\q,\q^{-1}]$ with $g'_X(\zeta)\ne 0$.  Since $1\le b_1\le \gamma$, we have $\nu_{e,p}(b_1)<\nu_{e,p}(C)$.  Consider $[C]_{\q}/[b_1]_{\q}$.  If $e \nmid b_1$ then the evaluation of $[C]_{\q}$ at $\zeta$ is zero.  Otherwise, write $-C=xep^k,b_1=yep^l$ where $p \nmid x,y$ so that $k>l$.   Then
\[\frac{[C]_{\q}}{[b_1]_{\q}} = \frac{-\q^{-C}(1+\q^{ep^l}+\ldots+\q^{(xp^{k-l}-1)ep^l})}{1+\q^{ep^l}+\ldots+\q^{(y-1)ep^l}}.\]
Since $p \mid xp^{k-l}$, the numerator of the last term evaluated at $\zeta$ is zero.     
\end{proof}

\section*{Acknowledgments}
We thank John Murray for extended discussions about his work with Harald
Ellers~\cite{EM,EllersMurray} on Carter-Payne homomorphisms for symmetric
groups, on which this paper is based. We also thank Steve Donkin for
telling us about the results in Dixon's thesis~\cite{Dixon} and Anton
Cox for his helpful comments.   

Research on this paper was begun at the Mathematical Sciences Research
Institute in Berkeley in 2008 during the parallel programs `Combinatorial
representation theory' and `Representation theory of finite groups and
related topics'. The authors thank the MSRI and the organizers of these
programs for  their support. This work was supported, in part, by the
Australian Research Council.

\end{document}